\documentclass[a4paper,10pt]{amsart}
\usepackage{amsmath}
\usepackage{amssymb}
\usepackage[all]{xy}
 \usepackage[usenames,dvipsnames]{pstricks}
 \usepackage{epsfig}
 \usepackage{pst-grad} 
 \usepackage{pst-plot}

\title[Thick subcategories of algebraic triangulated categories]{Thick subcategories of finite algebraic triangulated categories}
\author{Claudia K\"ohler}
\address{University of Paderborn/University of Bielefeld,
 Germany} 
\email{claudia.koehler@math.upb.de}

\newtheorem{Thm}{Theorem}[section]
\newtheorem{Prop}[Thm]{Proposition}
\newtheorem{Lemma}[Thm]{Lemma}

\theoremstyle{definition}
\newtheorem{Def}[Thm]{Definition}

\newtheorem{Ex}[Thm]{Example}

\theoremstyle{remark}
\newtheorem{Rem}[Thm]{Remark}

\newcommand{\dotcup}{\ensuremath{\mathaccent\cdot\cup}}

\newcommand{\stmod}{\operatorname{\underline{mod}}\nolimits}
\renewcommand{\mod}{\operatorname{mod}\nolimits}

\newcommand{\typ}{\operatorname{typ}\nolimits}
\newcommand{\Ext}{\operatorname{Ext}\nolimits}
\newcommand{\Hom}{\operatorname{Hom}\nolimits}
\newcommand{\End}{\operatorname{End}\nolimits}
\newcommand{\cox}{\operatorname{cox}\nolimits}
\newcommand{\id}{\operatorname{id}\nolimits}

\newcommand{\proj}{\operatorname{proj}\nolimits}
\newcommand{\Mod}{\operatorname{Mod}\nolimits}
\newcommand{\dimvec}{\operatorname{\underline{dim}}\nolimits}
\renewcommand{\dim}{\operatorname{dim}\nolimits}

\newcommand{\Cat}{\operatorname{Cat}\nolimits}
\newcommand{\NC}{\operatorname{NC}\nolimits}
\newcommand{\q}{\vec}
\newcommand{\An}{\vec{A}_n}
\newcommand{\Dn}{\vec{D}_n}
\newcommand{\En}{\vec{E}_n}
\newcommand{\der}{\mathcal{D}^b}
\newcommand{\per}{\operatorname{per}\nolimits}

\begin{document}

\begin{abstract}
We classify the thick subcategories of an algebraic triangulated standard category with finitely many indecomposable objects. 
\end{abstract}

\maketitle

\section{Introduction}
A full subcategory $\mathcal{S}$ of a triangulated category $\mathcal{T}$ is called \emph{thick} if it is a triangulated subcategory of $\mathcal{T}$ which is in addition closed under taking direct summands. 

A classification of thick subcategories can be used to gain structural information about the ambient triangulated category.

The classification problem was approached in various mathematical fields as in stable homotopy theory, in commutative algebra and in the representation theory of finite groups.
The first work was done by Hopkins and Smith concerning the $p$-local finite stable homotopy category \cite{Hopkins}. Hopkins \cite{Hopkins2} and Neeman \cite{Neeman} studied the concept for the category of perfect complexes $\mathcal{D}^{\per}(R)$ of a noetherian ring $R$. They showed that the thick subcategories of $\mathcal{D}^{\per}(R)$ correspond to specialization closed subsets of the prime ideal spectrum of $R$. Benson, Carlson and Rickard \cite{Benson} classified the thick subcategories of the stable module category of the group algebra $kG$ for a $p$-group $G$ in terms of closed subvarietes of the maximal ideal spectrum of the group cohomology ring.

The triangulated categories of our interest are algebraic triangulated categories. A triangulated category is called $\emph{algebraic}$ if it is triangle equivalent to the stable category of a Frobenius category. An example for such a category is the stable module category $\stmod(\Lambda)$ of a self-injective $k$-algebra $\Lambda$ where $k$ is a field. For instance, the group algebra $kG$ for a finite group $G$ is self-injective. 

We concentrate on algebraic triangulated categories which have only finitely many objects and which are standard, i.e.\ the category is equivalent to the mesh category of its Auslander-Reiten quiver. For technical reasons we also assume connectivity. 

The structure of these categories is well understood by work of Amiot \cite{Amiot}. In fact, they are triangle equivalent to the orbit category of the bounded derived category $\der(\mod(k\q{\Delta}))$ of a path algebra of a quiver of Dynkin type $\Delta$ with respect to a specific group of automorphisms of $\der(\mod(k\q{\Delta}))$. From this we define the type of an algebraic triangulated category to be $(\Delta,r,t)$ where $r,t$ depend on the group of automorphisms. 

Moreover,  Ingalls and Thomas \cite{Ingalls} give a combinatorial classification of exact abelian extension-closed subcategories of the abelian category $\mod(k\vec{\Delta})$ in terms of noncrossing partitions associated to $\vec{\Delta}$. 

The poset $\NC_{\vec{\Delta}}$ of noncrossing partitions associated to $\vec{\Delta}$ consists of certain elements of the Coxeter group associated to $\vec{\Delta}$.  For the Dynkin types $A$ and $D$ there is an alternative description $\NC^{A}(n)$ \cite{Reiner} and $\NC^{D}(n)$ \cite{Reiner2} of this poset induced by the hyperplane arrangement of the corresponding root system. $\NC^{A}(n)$ corresponds to the original definition of noncrossing partitions due to Kreweras \cite{Kreweras}.

In this paper, we connect the triangulated and the abelian world and we use the given classification in terms of noncrossing partitions for the classification of the thick subcategories of algebraic triangulated categories. This leads to the main result of this paper. 

\begin{Thm} 
Let $\mathcal{T}$ be a finite triangulated category which is connected, algebraic and standard of type $(\Delta,r,t)$ excluding the cases $(D_n,r,2)$ for $n$ even and $(D_4,r,3)$. Put $s=\gcd(h_\Delta,p)$ where $p$ is a natural number depending on the type of $\mathcal{T}$ and $h_\Delta$ is the Coxeter number. Then, there is a bijective correspondence between
\begin{itemize}
\item the thick subcategories of $\mathcal{T}$, and
\item elements of $\NC_{\vec{\Delta}}$ which are invariant under $s$-fold conjugation by the Coxeter element. 
\end{itemize}
\end{Thm}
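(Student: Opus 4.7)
The plan is to lift the classification from $\mathcal{T}$ to the derived category $\der(\mod(k\vec{\Delta}))$ via Amiot's orbit-category description, apply the Ingalls--Thomas bijection to classify thick subcategories there, and finally translate the orbit-group action into $s$-fold conjugation by $\cox_\Delta$ on $\NC_{\vec{\Delta}}$.

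First I would invoke Amiot's theorem to identify $\mathcal{T}$ with an orbit category $\der(\mod(k\vec{\Delta}))/G$, where $G=\langle\phi\rangle$ is a cyclic group whose generator is built from a power of the AR-translate $\tau$ and a shift $[\cdot]$, with the exponents prescribed by $r$ and $t$. Writing $\pi\colon \der(\mod(k\vec{\Delta}))\to\mathcal{T}$ for the projection, I would then show that $\mathcal{S}\mapsto\pi^{-1}(\mathcal{S})$ is a bijection between thick subcategories of $\mathcal{T}$ and $G$-invariant thick subcategories of $\der(\mod(k\vec{\Delta}))$. One direction is formal: preimages of thick subcategories are $G$-invariant and thick because $\pi$ is triangulated, sends summands to summands, and commutes with the $G$-action. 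For the other direction, given a $G$-invariant thick $\mathcal{S}\subseteq\der(\mod(k\vec{\Delta}))$, its image $\pi(\mathcal{S})$ is thick in $\mathcal{T}$ and $G$-invariance forces $\pi^{-1}\pi(\mathcal{S})=\mathcal{S}$.

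Second, I would classify the thick subcategories of $\der(\mod(k\vec{\Delta}))$. Since $\mod(k\vec{\Delta})$ is hereditary, every thick subcategory of its bounded derived category is of the form $\der(\mathcal{W})$ for a unique wide (that is, exact abelian extension-closed) subcategory $\mathcal{W}\subseteq\mod(k\vec{\Delta})$. Combined with the Ingalls--Thomas bijection between such wide subcategories and $\NC_{\vec{\Delta}}$, this yields a bijection between thick subcategories of $\der(\mod(k\vec{\Delta}))$ and noncrossing partitions.

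Third, I would match the $G$-action with $\cox_\Delta$-conjugation on $\NC_{\vec{\Delta}}$. The shift $[1]$ preserves the associated wide subcategory and therefore acts trivially on $\NC_{\vec{\Delta}}$, so only the $\tau$-component of $\phi$ contributes. Since $\tau$ acts on $K_0$ as $\cox_\Delta$ and the Ingalls--Thomas correspondence is compatible with this action, $\tau$ induces conjugation by $\cox_\Delta$ on $\NC_{\vec{\Delta}}$. A bookkeeping argument using $\tau^{h_\Delta}\simeq [-2]$ in $\der(\mod(k\vec{\Delta}))$ together with Amiot's formulas for $\phi$ in terms of $(r,t)$ then reduces $G$-invariance to invariance under $s$-fold conjugation, with $s=\gcd(h_\Delta,p)$ for the natural number $p$ extracted from these formulas.

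The main obstacle will be the third step: verifying that $\tau$ acts as $\cox_\Delta$-conjugation on noncrossing partitions (rather than only on $K_0$) requires a careful analysis of how Ingalls--Thomas attach a Coxeter element to a wide subcategory and how this attachment transforms under $\tau$. The exponent calculation giving $s=\gcd(h_\Delta,p)$ is likewise delicate and type-dependent. The excluded cases $(D_n,r,2)$ for $n$ even and $(D_4,r,3)$ can be anticipated to fail at precisely this step: the extra diagram automorphisms of $D_n$ and the triality of $D_4$ enter Amiot's group $G$ but are not inner Coxeter-group elements, so they cannot be described by conjugation by $\cox_\Delta$ and require separate treatment.
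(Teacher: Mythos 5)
Your overall architecture matches the paper's: Amiot's orbit-category description, a correspondence between thick subcategories of $\mathcal{T}$ and invariant thick subcategories of $\der(\mod(k\vec{\Delta}))$, Br\"uning plus Ingalls--Thomas to get noncrossing partitions, the identification of the $\tau$-action with conjugation by the Coxeter element, and a $\gcd$ reduction using $\tau^{h_\Delta}\cong S^2$. Your diagnosis of why $(D_n,r,2)$ for $n$ even and $(D_4,r,3)$ must be excluded (the automorphism $\phi$ there cannot be rewritten in terms of $\tau$ and $S$) is also essentially the paper's.

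However, there is a genuine gap at the step you dismiss as formal: the claim that for a $G$-invariant thick subcategory $\mathcal{S}\subseteq\der(\mod(k\vec{\Delta}))$ the image $\pi(\mathcal{S})$ is thick in the orbit category. This is precisely the hard direction, and it is not formal. A morphism $\pi(X)\to\pi(Y)$ in the orbit category is an element of $\bigoplus_{n}\Hom(X,\Phi^nY)$, and its cone cannot in general be obtained by lifting the triangle to $\der(\mod(k\vec{\Delta}))$; so closure of $\pi(\mathcal{S})$ under cones needs an argument. The paper's proof goes through Keller's dg orbit category: one embeds the orbit category into the derived category of the dg orbit category, applies the restriction $\pi_{\ast}$ to a completing triangle to obtain a triangle with vertices $\bigoplus_{n}\Phi^{n}(X)$, $\bigoplus_{n}\Phi^{n}(Y)$, $\bigoplus_{n}\Phi^{n}(Z)$ in $\mathcal{D}(\mod(A))$, takes the long exact cohomology sequence, uses Br\"uning's theorem to know $H^{0}(\mathcal{S})$ is wide, deduces that every $H^{p}(\bigoplus_n\Phi^n(Z))$ lies in $H^{0}(\mathcal{S})$ as an extension of a kernel and a cokernel there, and finally uses closure under direct summands plus Br\"uning's correspondence again to conclude $Z\in\mathcal{S}$. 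Nothing in your proposal substitutes for this cohomological argument, and without it the bijection in your first step is unproved. (Your third step, which you flag as the main obstacle, is indeed nontrivial --- the paper proves it by transporting a complete exceptional sequence of simples under $\tau$, replacing projectives $P(i)$ by injectives $I(i)$ and using $\cox(\vec{\Delta})\dimvec P(i)=-\dimvec I(i)$ --- but at least there you correctly identify what must be checked; the thickness of $\pi(\mathcal{S})$ is the step where your outline asserts as easy something that requires the paper's main technical work.)
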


Additionally, we formulate this result on the level of the alternative definition of noncrossing partitions $\NC^{A}(n)$ and $\NC^D(n)$. This consideration is combinatorial and we use it to determine the number of thick subcategories in these cases. 

For the two excluded types of the theorem it is not possible to describe the effect of the action of the group of automorphisms of $\der(\mod(k\q{\Delta}))$ in $\NC_{\vec{\Delta}}$. Hence, we classify the type $(D_4,r,3)$ by hand. The remaining type $(D_n,r,t)$ for $n$ even is covered by the classification in terms of certain invariant elements of $\NC^{D}(n)$ mentioned above. 

The paper is organised as follows. First we discuss the structure of finite algebraic triangulated categories. Then we introduce noncrossing partitions and explain the classification of exact abelian extension-closed subcategories. The subsequent section connects the two concepts and contains the main result. Afterwards, we analyse more closely the cases $A$ and $D$. An overview at the end helps to get along with the classification of the different types. 
At the end, we mention an application of the used method to the classification of thick subcategories of other orbit categories, namely cluster categories.   

Finally, a word on notation. If $\vec{\Delta}$ is a quiver, we denote by $\Delta$ its underlying graph. If the object does not depend on the orientation, we will just write $\Delta$ as for the repetition $\mathbb{Z}\Delta$ of the quiver or for the Coxeter number $h_\Delta$. Note that the classification of thick subcategories in terms of $\NC_{\vec{\Delta}}$ requires the choice of an orientation $\vec{\Delta}$. But this is not a problem since we are free to choose any orientation.  

Throughout this paper, let $k$ be an algebraically closed field, algebras are finite-dimensional $k$-algebras, and by $\mod(A)$ we mean the category of finite-dimensional left $A$-modules. All categories are $k$-linear and have finite-dimensional $\Hom$- and $\Ext$-spaces.

\section{The structure of finite algebraic triangulated categories} \label{finite_triangulated_cat}
\begin{Def}
A triangulated category $\mathcal{T}$ is \emph{locally finite} if for each indecomposable $X$ of $\mathcal{T}$ there are at most finitely many isomorphism classes of indecomposables $Y$ such that $\Hom_{\mathcal{T}}(X,Y)\neq 0$. If there are only finitely many indecomposable objects at all the category is called \emph{finite}.
\end{Def}

The structure of a locally finite triangulated category is classified via the shape of its Auslander-Reiten quiver. In order to describe this shape, we recall some definitions and notations concerning translation quivers and automorphism groups of quivers.

Let $\vec{\Delta}=(\vec{\Delta}_0,\vec{\Delta}_1,s,t)$ be a quiver. For a vertex $x\in \vec{\Delta}_0$ we denote by $x^+$ the set of direct successors of $x$ and by $x^-$ the set of direct predecessors of $x$. 

\begin{Def}
Let $\vec{\Delta}$ be a quiver. The \emph{repetition} of $\vec{\Delta}$ \cite{Riedtmann} is the translation quiver $\mathbb{Z}\Delta=\mathbb{Z}\vec{\Delta}$ defined as follows. The vertices of $\mathbb{Z}\Delta$ are the pairs $(i,x)$ with $i\in \mathbb{Z}$ and $x \in \vec{\Delta}_0$. To each arrow $\alpha\colon x \rightarrow y$ in $\vec{\Delta}$ and each $i \in \mathbb{Z}$ there is an arrow $(i,\alpha)\colon(i,x) \rightarrow (i,y)$ and an arrow $\sigma(i,\alpha)\colon(i-1,y) \rightarrow (i,x)$. The \emph{translation} $\tau$ is defined on $(\mathbb{Z}\Delta)_0$ via $\tau(i,x)=(i-1,x)$. The repetitive quiver does not depend on the orientation of $\vec{\Delta}$. 
\end{Def}

\begin{Def}
A group of automorphisms $G$ of a quiver $\vec{\Delta}$ is said to be \emph{admissible} \cite{Riedtmann} if no orbit of $G$ intersects a set of the form $\{x\} \cup x^+ $ or $\{x\} \cup x^-$ in more than one point. It is said to be \emph{weakly admissible} \cite{Dieterich} if, for each $g \in G \setminus \{1\}$ and for each $x \in \vec{\Delta}_0$ we have $x^+ \cap (gx)^+ =\emptyset$. Note that admissible implies weakly admissible. 
\end{Def}

\begin{Thm}[Amiot \cite{Amiot}]
Let $\mathcal{T}$ be a Krull-Schmidt locally finite connected triangulated category. Then, the Auslander-Reiten quiver $\Gamma_{\mathcal{T}}$ of $\mathcal{T}$ is isomorphic to $\mathbb{Z}\Delta / \Phi$ where $\Delta$ is Dynkin of type $A,D$ or $E$ and $\Phi$ is a weakly admissible group of automorphisms of $\mathbb{Z}\Delta$. The underlying graph $\Delta$ is unique up to isomorphism, and the group $\Phi$ is unique up to conjugacy. \qed
\end{Thm}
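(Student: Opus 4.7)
The strategy is to combine Riedtmann's structure theorem for stable translation quivers with a subadditive-function argument that forces the tree class to be Dynkin of type $A$, $D$ or $E$.

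First I would check that $\Gamma_{\mathcal{T}}$ is a connected, locally finite, \emph{stable} translation quiver: since $\mathcal{T}$ is Krull-Schmidt triangulated with finite-dimensional $\Hom$-spaces, Auslander-Reiten triangles exist, the AR-translate $\tau$ is defined on every vertex, and there are no projective or injective vertices. Applied to the connected quiver $\Gamma_{\mathcal{T}}$, Riedtmann's structure theorem produces a directed tree $\Delta$, unique up to isomorphism, and a group $\Phi$ of automorphisms of $\mathbb{Z}\Delta$ with $\Gamma_{\mathcal{T}}\cong\mathbb{Z}\Delta/\Phi$; weak admissibility of $\Phi$ then amounts to the covering $\mathbb{Z}\Delta\to\Gamma_{\mathcal{T}}$ being injective on arrows out of each vertex, which follows from $\Gamma_{\mathcal{T}}$ being a translation quiver with no multiple arrows between a given pair of vertices.

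The heart of the proof is showing that $\Delta$ is Dynkin. Fix an indecomposable $X\in\mathcal{T}$; by local finiteness the function $d(Y)=\dim_k\Hom_{\mathcal{T}}(X,Y)$ on the vertices of $\Gamma_{\mathcal{T}}$ has finite support. Lifting along the covering $\pi\colon\mathbb{Z}\Delta\to\Gamma_{\mathcal{T}}$ gives a $\Phi$-invariant non-negative integer valued function, and each Auslander-Reiten triangle in $\mathcal{T}$ translates into the mesh inequality $d(\tau z)+d(z)\leq\sum_{y\to z}d(y)$ at every vertex $z$, where $y\to z$ runs over direct predecessors. Summing over $\tau$-orbits — a finite sum because the support of $d$ is contained in finitely many $\Phi$-orbits — descends this to a positive subadditive, non-additive function on the underlying tree $\Delta$. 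By the Happel-Preiser-Ringel classification of connected trees admitting such a function, $\Delta$ must be one of $A_n$, $D_n$, $E_6$, $E_7$, $E_8$.

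The main obstacle will be the descent from $\mathbb{Z}\Delta$ to $\Delta$ via $\tau$-orbit summation and the verification of strict non-additivity somewhere (needed to rule out the extended Dynkin and the infinite types $A_\infty$, $D_\infty$, $A_\infty^\infty$); both rest on a careful use of local finiteness together with the existence of AR-triangles. Once $\Delta$ is Dynkin, uniqueness of $\Delta$ is part of Riedtmann's theorem, and uniqueness of $\Phi$ up to conjugacy in $\mathrm{Aut}(\mathbb{Z}\Delta)$ follows from the standard covering-theoretic freedom in the choice of a lift of a fundamental domain.
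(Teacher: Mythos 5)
The paper itself gives no proof of this theorem: it is quoted from Amiot \cite{Amiot}, whose argument (building on Xiao--Zhu \cite{Xiao}) is indeed the route you sketch --- existence of Auslander--Reiten triangles, Riedtmann's structure theorem, and a subadditive-function argument in the style of Happel--Preiser--Ringel. So the skeleton is right, but two of your steps fail as written. First, Auslander--Reiten triangles do not exist merely because $\mathcal{T}$ is Krull--Schmidt with finite-dimensional $\Hom$-spaces (perfect complexes over most finite-dimensional algebras satisfy these hypotheses and admit no AR triangles); their existence here is a theorem of Xiao and Zhu whose proof uses local finiteness essentially, via the fact that $\Hom_{\mathcal{T}}(-,X)$ has finite length for each indecomposable $X$. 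Relatedly, your descent of $d$ is not correct as stated: the lift of $d$ to $\mathbb{Z}\Delta$ is $\Phi$-invariant, so as soon as $\Phi$ is non-trivial (the case of interest in this paper, where $\Phi$ contains a power of $\tau$ up to $\phi$) the sum of the lifted function over a $\tau$-orbit of $\mathbb{Z}\Delta$ repeats the same non-zero values infinitely often and diverges; one must instead sum $d$ over the $\tau$-orbits of $\Gamma_{\mathcal{T}}$ itself (finite by local finiteness) and then transport the resulting function to $\Delta$ along $\Delta=\mathbb{Z}\Delta/\tau\rightarrow\Gamma_{\mathcal{T}}/\tau$.

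The more serious gap is the admissibility step. Riedtmann's structure theorem \cite{Riedtmann} applies to stable translation quivers \emph{without loops} and produces an \emph{admissible} group, and your argument (``injective on arrows out of each vertex'', ``no multiple arrows'') would, if it worked, prove admissibility. That conclusion is false in this generality, and this is exactly the content Amiot adds: for $A_n$ with $n$ even the group generated by $\phi$ (with $\phi^2=\tau$) is weakly admissible but not admissible --- see the Remark after Theorem \ref{wagroups} --- and the quotient $\mathbb{Z}A_n/\langle\phi\rangle$, which is realised as the Auslander--Reiten quiver of a locally finite triangulated category via the orbit-category construction, contains loops: $\phi^{-1}=\phi\tau^{-1}$ sends the vertex $(p,\frac{n}{2})$ to its direct successor $(p,\frac{n}{2}+1)$. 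Hence $\Gamma_{\mathcal{T}}$ need not be loop-free (nor is the absence of multiple arrows automatic), Riedtmann's theorem cannot be applied to it directly, and weak admissibility cannot be read off the way you propose. The missing work is precisely to delete the loops, apply the structure theorem to the loop-free quiver, and then analyse which automorphisms can create loops in the quotient; this analysis is where ``weakly admissible'' (and the extra generator $\phi$ with $\phi^2=\tau$ in type $A_{2n}$) enters, and it is absent from your proposal.
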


This generalises Riedtmann's theorem \cite{Riedtmann} which says that the stable Auslander-Reiten quiver of a representation-finite algebra is isomorphic to $\mathbb{Z}\Delta / \Phi$ where $\Delta$ is Dynkin and $\Phi$ is an admissible group of automorphisms. Riedtmann also gives a complete list of possible admissible groups of automorphisms of $\mathbb{Z}\Delta$. Amiot extends this list to a list of possible weakly admissible groups of automorphisms as follows. 

\begin{Thm}[Amiot \cite{Amiot}] \label{wagroups}
Let $\Delta$ be a Dynkin graph and let $\Phi$ be a non trivial weakly admissible group of automorphisms of $\mathbb{Z}\Delta$. This is a list of its possible generators.
\begin{itemize}
\item $\Delta = A_n$ with $n$ odd: possible generators are $\tau^r$ and $\phi \tau^r$ with $r \geq 1$ where $\phi$ is the reflection at the `central line' of $\mathbb{Z}A_n$ which is given by the vertices $\{(i,\frac{n+1}{2}) \mid i \in \mathbb{Z}\}$. Here we take as a basis of $\mathbb{Z}A_n$ the linearly oriented $\An$.\\
\begin{xy}
  \xymatrix@R=2ex@C=4ex{
   &  &  1 \ar[r] & 2 \ar[r]&  \cdots \ar[r] &n-1 \ar[r]   & n &
  }
\end{xy}
\item $\Delta = A_n$ with $n$ even: then possible generators are $\phi \tau^r$ with $r \geq 1$ where $\phi(p,q)=(p+q-\frac{n}{2}-1,n+1-q)$. Here $\phi^2=\tau$. 
\item $\Delta = D_n$ with $n \geq 5$: possible generators are $\tau^r$ and $\phi \tau^r$ with $r\geq 1$ where $\phi$ exchanges $(i,n-1)$ and $(i,n)$ $\forall i \in \mathbb{Z}$ and fixes the other vertices of $\mathbb{Z}D_{n}$. Here we take as a basis of $\mathbb{Z}D_{n}$ the linearly oriented $\Dn$.
\begin{xy}
  \xymatrix@R=2ex@C=4ex{
   	& &		  & 		&				  &			   &					& n \\
    & & 1 \ar[r] & 2 \ar[r]& \cdots \ar[r]   & n-3 \ar[r] & n-2 \ar[rd] \ar[ru]&    \\
    & &          &			&				  &			   &					& n-1
  }
\end{xy}
\item $\Delta = D_4$: possible generators are $\tau^r$ and $\phi \tau^r$ with $r\geq 1$ and where $\phi$ is either defined as for $n \geq 5$ or as follows.
Again with the linearly oriented $\q{D}_{4}$, $\phi((i,3))=(i,4), \phi((i,4))=(i-1,1), \phi((i-1,1))=(i,3)$ and $\phi((i,2))=(i,2)$ $\forall i \in \mathbb{Z}$.
\item $\Delta = E_6$: possible generators are $\tau^r$ and $\phi \tau^r$ with $r \geq 1$ and where $\phi$ is the reflection at the central line of $\mathbb{Z}E_6$ which is given by $\{(i,3),(i,4) \mid i \in \mathbb{Z}\}$. Here we assume the following orientation and numbering.\\
\begin{xy}
  \xymatrix@R=3ex@C=4ex{
   	 & &	  & 	    & 4			             &			  &   &  \\
     & & 1   & 2 \ar[l]& 3 \ar[l] \ar[u] \ar[r] & 5 \ar[r]   & 6 &    
  }
\end{xy}
 
\item $\Delta = E_7,E_8$: possible generators are $\tau^r$ with $r\geq 1$. \qed
\end{itemize} 
\end{Thm}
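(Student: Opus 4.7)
My approach is to first compute the automorphism group $\operatorname{Aut}(\mathbb{Z}\Delta)$ of the translation quiver, and then determine which subgroups satisfy the weak admissibility condition $x^+ \cap (gx)^+ = \emptyset$ for every $g \neq 1$. Since $\tau$ is intrinsic to the translation-quiver structure, it commutes with every automorphism; moreover an automorphism of $\mathbb{Z}\Delta$ is determined by its action on a fundamental $\tau$-domain, reducing in most cases to a graph automorphism of $\Delta$. This computation gives $\operatorname{Aut}(\mathbb{Z}\Delta) = \langle \tau \rangle \times \operatorname{Aut}(\Delta)$ for $\Delta = A_n$ ($n$ odd), $D_n$ ($n \geq 5$), $E_6$, $E_7$, $E_8$ (with $\operatorname{Aut}(\Delta)$ trivial in the last two cases); the semidirect product $\langle \tau \rangle \rtimes S_3$ for $D_4$ (triality); and, in the $A_n$-even case, the infinite cyclic group generated by a ``square root'' $\phi$ of $\tau$, namely $(p,q) \mapsto (p+q-n/2-1, n+1-q)$, which appears because no integer reflection of $\mathbb{Z}A_n$ fixes a vertex when $n$ is even.

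The weak admissibility check is then driven by the observation that every successor of $(i,x)\in\mathbb{Z}\Delta$ has $\mathbb{Z}$-coordinate $i$ or $i+1$. Hence any automorphism shifting this coordinate by at least $2$ (in absolute value) automatically satisfies the disjointness condition, so only finitely many ``near-identity'' elements of each cyclic subgroup need to be inspected. This immediately yields weak admissibility of $\langle \tau^r \rangle$ for every $r \geq 1$: the only non-automatic case is $\tau^{\pm 1}x$, for which the at-most-two shared-level candidate successors occupy different positions inside $\Delta$.

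For a non-trivial graph automorphism $\phi$ of $\Delta$ lifted to $\mathbb{Z}\Delta$, any fixed vertex $x_0$ forces $x_0^+ \cap \phi(x_0)^+ = x_0^+ \neq \emptyset$, ruling out $\phi$ alone whenever $\phi$ has a fixed vertex; this occurs for the reflections in types $A_n$ (odd), $D_n$ ($n \geq 5$), $E_6$, and for both the reflection and the triality on $D_4$. Twisting by $\tau^r$, $r \geq 1$, displaces the fixed locus into a different level, and a direct check confirms that $\langle \phi \tau^r \rangle$ is then weakly admissible. Doing this case by case produces the tabulated list. The main obstacle I anticipate is the $D_4$ case: the order-$3$ triality permutes three leaves and mixes levels non-trivially, so I must verify the condition for $\phi\tau^r$ and $\phi^2\tau^{2r}$ (and higher powers) separately, in addition to the alternative order-$2$ reflection listed. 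This is a finite but delicate piece of bookkeeping, whereas the other types reduce cleanly to the ``heights $i$ and $i+1$'' principle above.
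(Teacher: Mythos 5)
The paper itself gives no argument for this statement: it is quoted from Amiot (who in turn builds on Riedtmann's computation of $\operatorname{Aut}(\mathbb{Z}\Delta)$), so your plan can only be measured against that standard route, which it essentially follows: compute the automorphism group, then test the condition $x^+\cap (gx)^+=\emptyset$ element by element. Two genuine gaps remain, however. First, the reduction you use to compute $\operatorname{Aut}(\mathbb{Z}\Delta)$ --- ``determined by its action on a fundamental $\tau$-domain, reducing in most cases to a graph automorphism of $\Delta$'' --- is not correct as stated, because the relevant automorphisms need not preserve any slice or the level coordinate: already for $A_n$ with $n$ odd the reflection in the paper's coordinates is the shear $(p,q)\mapsto(p+q-\tfrac{n+1}{2},\,n+1-q)$ (a level-preserving flip is not an automorphism for the linear orientation), and the $D_4$ triality shifts the column through the vertex $1$, exactly as in the formula of the theorem. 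The correct statement is that every automorphism induces a graph automorphism of the orbit quiver $\Delta$, giving an extension of (a subgroup of) $\operatorname{Aut}(\Delta)$ by $\langle\tau\rangle$; deciding when this extension splits (it does not for $A_n$ even, where $\phi^2=\tau$) and that no further ``fractional translations'' exist in types $D$ and $E$ is precisely the nontrivial content you are assuming. This also means your ``shift by at least $2$'' principle cannot be applied to a single uniform height displacement; for the sheared automorphisms the displacement depends on the column, and the finiteness of the check must be argued columnwise (it does go through, but not verbatim as you state it, and not only in the $D_4$ case you single out).

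Second, your plan only verifies that the listed generators yield weakly admissible groups and that the translation-free graph automorphisms fail (the fixed-vertex argument is fine, and correctly does not exclude $\phi$ in the $A_n$-even case, which is indeed weakly admissible by the Remark following the theorem). That does not yet prove the classification, which asserts that \emph{every} nontrivial weakly admissible group is cyclic with a generator from the list. Since weak admissibility is a condition on each nontrivial element, you must still enumerate the subgroups of $\operatorname{Aut}(\mathbb{Z}\Delta)$ all of whose nontrivial elements pass the test: e.g.\ for $\mathbb{Z}\times\mathbb{Z}/2$ one has to see that a subgroup of the form $\langle\tau^r,\phi\tau^s\rangle$ either contains the bad element $\phi$ or is already cyclic of the listed form, and for $D_4$ one must exclude subgroups whose image in $S_3$ is non-cyclic (a commutator of a triality-type and a reflection-type element is a translation-free triality, which fixes the column through the branch vertex and hence violates the condition). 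This is finite bookkeeping, but it is the exhaustiveness half of the theorem and is missing from your proposal.
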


\begin{Rem}
The only weakly admissible group of automorphisms which is not admissible occurs for $A_n$ with $n$ even and is generated by $\phi$. 
\end{Rem}

\begin{Rem}
If $\mathcal{T}$ is locally finite, but not finite, then the group $\Phi$ of automorphisms is trivial. 
\end{Rem}

Knowing the shape of the Auslander-Reiten quiver does not help us yet to study thick subcategories. We need to know the triangulated structure of $\mathcal{T}$. 

\begin{Thm}[Amiot \cite{Amiot}] \label{alg_triangulated}
If $\mathcal{T}$ is a finite triangulated category which is connected, algebraic and standard, then there exists a Dynkin diagram $\Delta$ of type $A,D$ or $E$ and an auto-equivalence $\Phi$ of $\der(\mod(k\q{\Delta}))$ such that $\mathcal{T}$ is triangle equivalent to the orbit category $\der(\mod(k\q{\Delta}))/\Phi$. \qed
\end{Thm}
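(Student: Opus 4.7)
The plan is to reduce to the previous Amiot theorem on the shape of the Auslander--Reiten quiver and then invoke the classical realisation of the mesh category of $\mathbb{Z}\Delta$ inside $\der(\mod(k\q{\Delta}))$. Since $\mathcal{T}$ is finite and connected, in particular Krull--Schmidt and locally finite, the previous theorem yields a Dynkin diagram $\Delta$ and a weakly admissible group $\Phi$ of automorphisms of $\mathbb{Z}\Delta$ with $\Gamma_{\mathcal{T}} \cong \mathbb{Z}\Delta / \Phi$. Standardness then means that $\mathcal{T}$ is equivalent, as a $k$-linear category, to the mesh category $k(\Gamma_{\mathcal{T}})=k(\mathbb{Z}\Delta/\Phi)$. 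Because $\Phi$ is weakly admissible, the defining meshes of $\mathbb{Z}\Delta$ descend without identification to the quotient quiver, so $k(\mathbb{Z}\Delta/\Phi)$ is naturally equivalent to the orbit category $k(\mathbb{Z}\Delta)/\Phi$ of the ambient mesh category.

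Next, I would invoke Happel's theorem: for any orientation $\q{\Delta}$ of a Dynkin graph $\Delta$, the mesh category $k(\mathbb{Z}\Delta)$ is equivalent as a $k$-linear category to the full subcategory of indecomposables of $\der(\mod(k\q{\Delta}))$, and under this equivalence the combinatorial translation $\tau$ of $\mathbb{Z}\Delta$ is intertwined with the Auslander--Reiten translate of the derived category. Each generator of $\Phi$ listed in Theorem~\ref{wagroups} can then be realised by an auto-equivalence of $\der(\mod(k\q{\Delta}))$: powers of $\tau$ are evident from Serre duality; the reflections $\phi$ in types $A_n$ and $E_6$ come from the Nakayama functor; the arm-swap in $D_{n\geq 5}$ is induced by the non-trivial graph automorphism; and the $D_4$ triality lifts to the corresponding auto-equivalence of the derived category. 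Call the lifted group again $\Phi$.

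Combining the previous two steps, one obtains a $k$-linear equivalence $\mathcal{T}\simeq \der(\mod(k\q{\Delta}))/\Phi$ between orbit categories. The remaining, and main, obstacle is to upgrade this to a \emph{triangle} equivalence: orbit categories of triangulated categories are not automatically triangulated, and one must verify that the quotient inherits a triangulation compatible with the given one on $\mathcal{T}$. I would address this by invoking Keller's theorem on triangulated orbit categories, whose hypotheses have to be checked case-by-case against Theorem~\ref{wagroups}; the compatibility of the inherited triangulation with the Auslander--Reiten-theoretic data on $\mathcal{T}$ then follows since both sides are standard with identical mesh categories. This last step is precisely Amiot's contribution, and it is where the standardness and algebraicity hypotheses on $\mathcal{T}$ are genuinely used.
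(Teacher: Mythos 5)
The paper itself does not prove this statement; it is quoted from Amiot \cite{Amiot} with a \qed, so your attempt has to be measured against Amiot's argument. Your reduction steps are consistent with it in outline: passing to the Auslander--Reiten quiver $\mathbb{Z}\Delta/\Phi$ via the earlier theorem, descending the mesh category along the weakly admissible group, invoking Happel's equivalence between $k(\mathbb{Z}\Delta)$ and the indecomposables of $\der(\mod(k\q{\Delta}))$, lifting the generators listed in Theorem~\ref{wagroups} to auto-equivalences of the derived category, and using Keller's Theorem~\ref{orbitcat} to triangulate the orbit category are all sound and are indeed ingredients of the actual proof.

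The genuine gap is your last step. A $k$-linear equivalence between $\mathcal{T}$ and $\der(\mod(k\q{\Delta}))/\Phi$ is all that your argument produces, and the assertion that the triangulations are then compatible ``since both sides are standard with identical mesh categories'' does not follow: a triangulation is extra structure which is not determined by the underlying $k$-linear category, and two standard categories with the same mesh category could a priori carry inequivalent triangle structures. This is exactly the point where algebraicity must do real work, yet your sketch never uses it --- no exact functor between $\mathcal{T}$ and the orbit category is ever constructed. Amiot's proof supplies precisely this: algebraicity gives a Frobenius (dg) enhancement of $\mathcal{T}$; one takes the sum $T$ of the indecomposables lying on a section of the Auslander--Reiten quiver, uses standardness and the mesh relations to show that its dg endomorphism algebra has homology isomorphic to $k\q{\Delta}$ concentrated in degree zero, and thereby obtains an \emph{exact} functor $\der(\mod(k\q{\Delta}))\rightarrow\mathcal{T}$; the universal property of Keller's dg orbit category \cite{Keller} then factors this through a triangle functor $\der(\mod(k\q{\Delta}))/\Phi\rightarrow\mathcal{T}$, which is finally checked to be fully faithful and dense by the mesh-category comparison. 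Without some construction of this kind, the claimed triangle equivalence is asserted rather than proved, so the central step of your proposal is missing.
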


\begin{Rem}
Since $\mathcal{T}$ is standard and since by Happel \cite{Happel} the category of indecomposable objects in $\der(\mod(k\q{\Delta}))$ is equivalent to the mesh category $k(\mathbb{Z}\Delta)$, $\Delta$ and $\Phi$ are those induced by $\Delta$ and the group of automorphisms coming from the Auslander-Reiten quiver of $\mathcal{T}$. We denote the automorphism of $\der(\mod(k\q{\Delta}))$ induced by the group of automorphisms $\Phi$ of $\mathbb{Z}\Delta$ by the same character $\Phi$. 

Note that the construction of the orbit category requires an automorphism on $\mathcal{T}$. A standard construction allows one to replace a category with auto-equivalence by a category with automorphism. 
\end{Rem}

Finally, one may ask the question whether each Dynkin type $\Delta$ and each weakly admissible group of automorphisms $\Phi$ give rise to a locally-finite triangulated category with Auslander-Reiten quiver $\mathbb{Z}\Delta/\Phi$. Xiao and Zhu \cite{Xiao} point out that this is actually true. Just take the  orbit category $\der(\mod(k\q{\Delta}))/\Phi$. To show this one checks the assumptions in B. Keller's theorem on triangulated orbit categories \cite{Keller}. 

However, it is not possible to realise each possible Auslander-Reiten quiver $\mathbb{Z}\Delta/\Phi$ via the stable module category of a representation-finite self-injective algebra. Thus, the generalisation to an arbitrary algebraic triangulated category really leads to a greater class of categories.

But which groups do occur in case of the stable module category of a re\-pre\-sentation-finite self-injective standard algebra $\Lambda$? H. Asashiba \cite{Asashiba} classifies them as follows. 

Let $\Lambda$ be a representation-finite self-injective standard algebra with stable Aus\-lander-Reiten quiver $_s\Gamma_\Lambda\cong \mathbb{Z}\Delta/\langle \phi \tau^r \rangle$. Here $\tau$ is the translation on $\mathbb{Z}\Delta$ and $\phi$ is an automorphism of order $1$, $2$ or $3$ defined in \ref{wagroups} such that $\langle \phi \tau^r \rangle$ is an admissible group of automorphisms of $\mathbb{Z}\Delta$. 
Define the type  $\typ(\Lambda)=(T(\Lambda),f(\Lambda),t(\Lambda))$ of $\Lambda$ where $T(\Lambda)=\Delta$ is the tree class, $f(\Lambda)=r/m_{\Delta}$ is the \emph{frequency}, $t(\Lambda)$ is the \emph{order} of $\phi$ and $m_{\Delta}=h_\Delta-1$ where $h_\Delta$ is the \emph{Coxeter number} associated to $\Delta$, i.e.\ the order of the Coxeter element. The type determines $\Lambda$ up to stable equivalence (i.e.\ a triangle equivalence between the stable module categories). 
\begin{Thm}[Asashiba \cite{Asashiba}]
Let $\Lambda, \Lambda'$ representation-finite self-injective algebras. 
\begin{enumerate}
\item If $\Lambda$ is standard and $\Lambda'$ is non-standard, then $\Lambda$ and $\Lambda'$ are not stably equivalent. 
\item If both $\Lambda$ and $\Lambda'$ are standard, then $\Lambda$ and $\Lambda'$ are stably equivalent if and only if $\typ(\Lambda)=\typ(\Lambda')$. 
\item If both $\Lambda$ and $\Lambda'$ are non-standard, then $\Lambda$ and $\Lambda'$ are stably equivalent if and only if $\typ(\Lambda)=\typ(\Lambda')$.  \qed
\end{enumerate}
\end{Thm}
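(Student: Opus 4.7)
The plan is to treat the only-if direction as a quiver-theoretic uniqueness statement, the if direction for standard algebras via an explicit orbit construction, and to handle the non-standard case by isolating the deformations of the mesh category that give rise to self-injective algebras. For the only-if direction, a stable equivalence $\stmod\Lambda\simeq\stmod\Lambda'$ induces an isomorphism of stable Auslander--Reiten quivers ${}_s\Gamma_\Lambda\cong{}_s\Gamma_{\Lambda'}$. Riedtmann's theorem presents each such quiver as $\mathbb{Z}\Delta/\langle\phi\tau^r\rangle$ with $\Delta$ Dynkin and the group admissible, and its uniqueness statement forces the tree class $\Delta$ to coincide and the groups $\langle\phi\tau^r\rangle$ to be conjugate. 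Conjugate admissible subgroups of $\operatorname{Aut}(\mathbb{Z}\Delta)$ share the exponent $r$ and have generators of the same order, so the whole type $(T(\Lambda),f(\Lambda),t(\Lambda))$ is a stable invariant, which handles the forward implications in (2) and (3).

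For the if direction in (2), given a type $(\Delta,f,t)$, I would construct a canonical standard representative $B(\Delta,f,t)$ by an orbit construction: the admissible group $\langle\phi\tau^r\rangle$ with $r=f\cdot m_\Delta$ acts on the mesh category $k(\mathbb{Z}\Delta)$, and passing to orbits in the associated repetitive algebra $\widehat{k\vec{\Delta}}$ yields a self-injective algebra whose stable module category is equivalent, as a $k$-linear category, to the mesh category $k(\mathbb{Z}\Delta/\langle\phi\tau^r\rangle)$. Any standard $\Lambda$ of the same type admits, by the definition of standardness together with Riedtmann's theorem, a $k$-linear equivalence $\stmod\Lambda\simeq k(\mathbb{Z}\Delta/\langle\phi\tau^r\rangle)$. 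Composing these yields $\stmod\Lambda\simeq\stmod B(\Delta,f,t)$, and I would upgrade it to a triangle equivalence along the lines of Amiot's argument underlying Theorem \ref{alg_triangulated}, using that the triangulated structure on a standard stable category is determined by the mesh structure.

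For item (1), the $k$-linear equivalence class of the stable category itself separates the two worlds: by definition $\stmod\Lambda$ is equivalent to the mesh category of ${}_s\Gamma_\Lambda$ precisely when $\Lambda$ is standard, and since stable equivalence preserves both sides, a standard and a non-standard algebra cannot be stably equivalent. For item (3), the non-standard algebras occur only in a few specific Dynkin types (the classical Riedtmann examples in characteristic $2$), and a parallel orbit construction applied to the non-standard deformation of $k(\mathbb{Z}\Delta)$ yields a representative for each relevant type, which one then shows to be unique up to stable equivalence by the same argument as in the standard case. The main obstacle will be the classification of these non-standard deformations of the mesh category and the verification that the resulting stable category really is not equivalent, as a $k$-linear category, to the mesh category itself; this amounts to a Hochschild-cohomological or covering-theoretic analysis of the repetitive algebra $\widehat\Lambda$ and constitutes the technical heart of Asashiba's argument.
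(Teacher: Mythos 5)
The paper does not prove this statement at all: it is quoted from Asashiba's classification and used as a black box (note the immediate \qed), so there is no internal proof to compare against; your sketch has to be measured against Asashiba's actual argument. Its skeleton — the type is a stable invariant via invariance of the stable Auslander--Reiten quiver and Riedtmann's uniqueness statement, same type plus standardness yields a common model, nonstandard algebras treated separately — does reflect the shape of that classification, and your only-if direction (conjugate admissible groups $\langle\phi\tau^r\rangle$ have the same $r$ and the same order of $\phi$) is fine.

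Two steps, however, are genuinely gapped. First, in item (1) you claim that ``by definition $\stmod\Lambda$ is equivalent to the mesh category of ${}_s\Gamma_\Lambda$ precisely when $\Lambda$ is standard.'' That is not the definition: standardness is a property of the full module category ($\ind\Lambda\simeq k(\Gamma_\Lambda)$), and both implications at the stable level require proof. In particular, showing that the nonstandard algebras (Riedtmann's $D_{3m}$ deformations in characteristic $2$) have stable categories that are not even $k$-linearly equivalent to the mesh category of their stable AR quiver is exactly the hard content of (1); you cannot dispose of (1) as definitional while deferring only the ``classification of deformations'' to item (3). Second, for the if-direction of (2), composing two $k$-linear equivalences with the mesh category $k(\mathbb{Z}\Delta/\langle\phi\tau^r\rangle)$ gives only a $k$-linear stable equivalence, whereas the statement as used in this paper asserts a triangle equivalence, and the upgrade is not automatic. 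Your appeal to Amiot can be made to work (both stable categories are finite, connected, algebraic and standard with isomorphic AR quivers, hence both triangle equivalent to the same orbit category by Theorem \ref{alg_triangulated}), but then you must actually verify those hypotheses rather than gesture at them, and you should note that this is anachronistic: Asashiba's own route classifies the standard algebras up to derived equivalence and then invokes Rickard's theorem that a derived equivalence of self-injective algebras induces a triangle equivalence of stable categories. As written, your text is a plausible outline of the known proof, not a proof.
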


Moreover, Asashiba gives a complete list of these types for standard algebras:
\begin{Thm}[Asashiba \cite{Asashiba}]
 The set of types of standard representation-finite self-injective algebras is equal to the disjoint union of the following sets
\begin{itemize}
 \item $\{(A_n,s/n,1) \mid n,s \in \mathbb{N}\}$;
 \item $\{(A_{2p+1},s,2) \mid p,s \in \mathbb{N}\}$;
 \item $\{(D_n,s,1) \mid n,s \in \mathbb{N},n\geq4\}$;
 \item $\{(D_{3m},s/3,1) \mid m,s \in \mathbb{N}, m\geq2, 3 \nmid s\}$;
 \item $\{(D_n,s,2) \mid n,s \in \mathbb{N},n\geq 4\}$;
 \item $\{(D_4,s,3) \mid s \in \mathbb{N}\}$;
 \item $\{(E_n,s,1) \mid n=6,7,8,s \in \mathbb{N}\}$;
 \item $\{(E_6,s,2) \mid s \in \mathbb{N}\}$. \qed
\end{itemize}
\end{Thm}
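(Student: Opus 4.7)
The plan is to combine the classification of admissible automorphism groups of $\mathbb{Z}\Delta$ with an explicit realisation of each admissible triple $(\Delta,f,t)$ by a standard self-injective algebra. I would prove necessity (every $\typ(\Lambda)$ lies in the stated union) and sufficiency (every triple in the union is realised) separately.

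For necessity, I would invoke Riedtmann's theorem: the stable Auslander-Reiten quiver of a representation-finite self-injective $\Lambda$ is $\mathbb{Z}\Delta/\Phi$ with $\Delta$ Dynkin and $\Phi$ admissible. The admissible $\Phi$ form a subfamily of the weakly admissible list of Theorem \ref{wagroups}: generators are of the form $\phi\tau^r$ with $\phi$ as listed there, but now subject to the full admissibility condition that no orbit of $\Phi$ meets $\{x\}\cup x^+$ in more than one point. Unwinding $\typ(\Lambda)=(\Delta,r/m_\Delta,\mathrm{ord}(\phi))$ translates this into a concrete list of candidate triples $(\Delta,f,t)$, and the preceding stable-equivalence theorem of Asashiba ensures that distinct admissible conjugacy classes of $\Phi$ give distinct types.

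For sufficiency, I would construct, for each candidate triple, an explicit standard self-injective algebra realising it. Self-injective Nakayama algebras (Brauer-tree algebras of a single cycle with arbitrary multiplicity) realise $(A_n,s/n,1)$; twisting by the reflection at the central line of $\mathbb{Z}A_n$ yields $(A_{2p+1},s,2)$. Trivial extensions of iterated tilted algebras of type $\Delta$, possibly twisted by a graph automorphism, cover $(D_n,s,1)$, $(E_n,s,1)$, $(D_n,s,2)$ and $(E_6,s,2)$. The remaining fractional-frequency series $(D_{3m},s/3,1)$ and the triality series $(D_4,s,3)$ come from Galois-covering constructions exploiting the order-$3$ automorphism specific to $D_4$ and $D_{3m}$; an explicit mesh presentation (unique up to sign choices by Happel's theorem) verifies standardness.

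The hardest step will be sufficiency for the $(D_{3m},s/3,1)$ series with $m\ge 2$ and $3\nmid s$: one must show that the Galois-cover construction yields a representation-finite self-injective algebra which is standard, has precisely the stated type, and does not collapse into the integer-frequency series $(D_{3m},s',1)$ with $s'=s/3$. A parallel subtlety is keeping the standard $(D_4,s,3)$ algebras separate from the non-standard algebras appearing in characteristic $2$, which, by the disjointness forced by parts (1)--(3) of Asashiba's stable-equivalence theorem, must fall outside this list. Once realisation and standardness are controlled for these exceptional cases, the necessity and sufficiency steps match up to give the stated disjoint union.
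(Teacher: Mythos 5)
This theorem is quoted in the paper from Asashiba's work and is not proved there (it carries a \qed as a cited result), so there is no internal proof to compare your proposal against; what can be assessed is whether your sketch would actually establish Asashiba's classification. As it stands, the necessity direction has a genuine gap. Riedtmann's theorem plus the list of admissible groups only tells you that ${}_s\Gamma_\Lambda\cong\mathbb{Z}\Delta/\langle\phi\tau^r\rangle$ for \emph{some} admissible $\langle\phi\tau^r\rangle$; it does not restrict $r$ beyond admissibility, and for instance $\langle\tau^r\rangle$ is admissible on $\mathbb{Z}D_n$ for every $r\geq 1$. The whole content of the statement is that only the listed frequencies occur, e.g.\ for tree class $D_n$ only integer frequencies, and fractional frequency $s/3$ only when $3\mid n$ --- indeed the paper itself stresses that not every (weakly) admissible $\mathbb{Z}\Delta/\Phi$ is realised by the stable module category of a representation-finite self-injective algebra. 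Ruling out the non-listed values of $r$ cannot be done by ``unwinding $\typ(\Lambda)$''; it requires the classification of configurations of $\mathbb{Z}\Delta$ (Riedtmann, Bretscher--L\"aser--Riedtmann) or Asashiba's covering-theoretic arguments, which control where the projectives can sit, and none of this appears in your sketch.

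Two further points. First, your appeal to ``the order-$3$ automorphism specific to $D_4$ and $D_{3m}$'' is misleading: for $m\geq 2$ the graph $D_{3m}$ has no automorphism of order $3$, and the $(D_{3m},s/3,1)$ series is not obtained by twisting by such an automorphism but from configurations of $\mathbb{Z}D_{3m}$ that are $\tau^{(2m-1)s}$-periodic; moreover its third type entry is $1$, so no nontrivial $\phi$ is involved at all. Second, invoking parts (1)--(3) of the stable-equivalence theorem to separate standard from non-standard $(D_4,s,3)$ algebras is fine for disjointness but does not by itself show that every triple in the list is attained by a \emph{standard} algebra; the sufficiency step needs the explicit constructions you name (Nakayama algebras, twisted trivial extensions, etc.) carried out with a verification of standardness, which is the part of Asashiba's work your outline defers rather than supplies.
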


Following Asashiba's approach, we define the type of a finite connected triangulated category.
\begin{Def}
Let $\mathcal{T}$ be a finite connected triangulated category with Auslander-Reiten quiver $\Gamma_{\mathcal{T}} \cong \mathbb{Z}\Delta/\Phi$. As we have seen, $\Phi$ is always generated by an element $\phi \tau^r$ with $r \geq 1$ where $\phi$ is an automorphism of order $t=1,2,3$ or of infinite order (which only appears for $A_n$ with $n$ even). Then we define the \emph{type} of $\mathcal{T}$ to be $\typ(\mathcal{T})=(\Delta,r,t)$ where $t\in \{1,2,3,\infty\}$.

Note that the type does not depend on the orientation of $\Delta$. 
\end{Def}

\begin{Rem}
Note that this definition is not the generalisation of the type of a representation-finite self-injective algebra since the second entry is different. E.g.\ if we have an algebra $\Lambda$ with $\typ(\Lambda)=(A_5,2,2)$, then the type of the corresponding triangulated category is given by $\typ(\stmod(\Lambda))=(A_5,10,2)$. 
\end{Rem}

The type of a finite connected triangulated category is contained in the disjoint union of the following sets.
\begin{itemize}
\item $\{(A_n,r,1) \mid n,r \in \mathbb{N} \}$;
\item $\{(A_{2n+1},r,2) \mid n,r \in \mathbb{N} \}$;
\item $\{(A_{2n},r,\infty) \mid n,r \in \mathbb{N} \}$;
\item $\{(D_n,r,1) \mid n,r \in \mathbb{N}, n \geq 4 \}$;
\item $\{(D_n,r,2) \mid n,r \in \mathbb{N}, n \geq 4 \}$;
\item $\{(D_4,r,3) \mid r \in \mathbb{N}\}$;
\item $\{(E_n,r,1) \mid r \in \mathbb{N},n=6,7,8\}$;
\item $\{(E_6,r,2) \mid r \in \mathbb{N}\}$.
\end{itemize}

\section{Wide subcategories and noncrossing partitions} \label{wide_and_ncp}

An exact abelian extension-closed subcategory of an abelian category is called \emph{wide}. Thomas and Ingalls classify the wide subcategories of $\mod(k\vec{\Delta})$ for $\vec{\Delta}$ a Dynkin quiver in terms of noncrossing partitions.
\begin{Def}
Let $\mathcal{A}$ be an abelian category. 
We call an object $M \in \mathcal{A}$ \emph{exceptional} if it has no self-extensions. A sequence of objects $X_{1},\ldots,X_{r}$ in $\mathcal{A}$ is called \emph{exceptional} if each $X_{i}$ is exceptional, and for $i<j$, $\Hom(X_{j},X_{i})=0$ and $\Ext^{1}(X_{j},X_{i})=0$. 

An exceptional sequence $E_{1}, \ldots, E_{r}$ in a wide subcategory $\mathcal{W}\subseteq \mathcal{A}$ is called \emph{complete} if $r$ equals the number of simples in $\mathcal{W}$.
\end{Def}

\begin{Def}
Fix an acyclic finite quiver $\vec{\Delta}$. 
The \emph{Euler form} on $K_0(k\q{\Delta})\cong \mathbb{Z}^n$ is defined by
$$ \langle \dimvec(M),\dimvec(N) \rangle = \dim_k\Hom(M,N) - \dim_k \Ext^1(M,N).  $$
Let $(-,-)$ be the symmetrisation of this, i.e.\ $(v,w)=\langle v,w \rangle + \langle w,v \rangle$ for $v,w \in \mathbb{Z}^n$. 
$v=(v_1,\ldots,v_n)\in\mathbb{Z}^n$ is called a \emph{positive root} if $(v,v)=2$ and $v_j \geq 0$ for all $1\leq j \leq n$.
To any positive root $v$ we associate a \emph{reflection} $s_v \colon \mathbb{Z}^n \rightarrow \mathbb{Z}^n$ by
$$ s_{v}(w)=w-(v,w)v.   $$
The group of automorphisms of $\mathbb{Z}^n$ generated by these reflections is called the \emph{Coxeter group} $W=W_{\q{\Delta}}$ associated to $\q{\Delta}$.

For $w \in W$ let $l(w)$ be the length of the shortest expression for $w$ as a product of reflections. For $u,v \in W$ we say that $u < v$ if $v=ut$ for a reflection $t$ and $l(v)=l(u)+1$. We consider the transitive closure of this relation and denote the resulting partial order by $\leq$.

We may enumerate the vertices of $\vec{\Delta}$ such that the sequence of simple representations $S_{1}, \ldots, S_{n}$ is exceptional. The product of the corresponding reflections $\cox(\vec{\Delta})=s_{1}\ldots s_{n}$ is called \emph{Coxeter element} of $\vec{\Delta}$.

Finally, the \emph{noncrossing partitions} associated to $\vec{\Delta}$ are the elements of the set
$$\NC_{\vec{\Delta}}=\{w \in W \mid \id \leq w \leq \cox(\vec{\Delta})\}.$$
\end{Def}

\begin{Rem}
In case $\Delta=A_n$, $\Delta=B_n$ and $\Delta=D_n$ there is a visualisation of $\NC_{\vec{\Delta}}$ which justifies the name `noncrossing'  and which is very useful for our classification. We will discuss this in the sections \ref{A_n} and \ref{D_n}
\end{Rem}

Now, we have the following combinatorial classification of wide subcategories. 
\begin{Thm}[Ingalls/Thomas \cite{Ingalls}] \label{Ingalls}
Let $\vec{\Delta}$ be Dynkin. Let $\mathcal{C}=\mod(k\vec{\Delta})$. There are bijections between the following objects
\begin{itemize}
 \item wide subcategories in $\mathcal{C}$,
 \item the noncrossing partitions associated to $\vec{\Delta}$.
\end{itemize}
The bijection is given as follows. Let $\mathcal{W}$ be a wide subcategory of $\mod(k\vec{\Delta})$. Take a complete exceptional sequence $E_{1}, \ldots, E_{r}$ of $\mathcal{W}$ and map $\mathcal{W}$ to $$\cox(\mathcal{W})=s_{E_{1}}\ldots s_{E_{r}}.$$ 
For the complete exceptional sequence we may take the sequence of the simple objects of $\mathcal{W}$ in an appropriate order. The map $\cox$ is well defined, no matter which complete exceptional sequence we take.  \qed
\end{Thm}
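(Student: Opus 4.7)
The plan is to prove the bijection by exploiting the interplay between \emph{mutations of exceptional sequences} on the representation-theoretic side and the \emph{Hurwitz action on reduced reflection factorizations} on the Coxeter-theoretic side. First I would establish that any wide subcategory $\mathcal{W}\subseteq \mod(k\vec{\Delta})$ is itself equivalent to $\mod(k\vec{\Gamma})$ for some Dynkin quiver $\vec{\Gamma}$: since $k\vec{\Delta}$ is hereditary, exact abelian extension-closed subcategories inherit heredity, and finite representation type descends to $\mathcal{W}$. In particular $\mathcal{W}$ has finitely many simple objects, and an appropriate ordering of these simples gives a complete exceptional sequence of $\mathcal{W}$, supplying a canonical choice of input for the map $\cox$.

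Next I would verify well-definedness of $\cox(\mathcal{W})$. By the classical results of Crawley--Boevey and Ringel, the braid group acts transitively on complete exceptional sequences in a hereditary category, a single mutation replacing $(E_i,E_{i+1})$ by $(E_{i+1}^{*},E_i)$ (or its dual). A direct computation shows that on the level of reflections this is the Hurwitz move $(s,t)\mapsto (sts^{-1},s)$, which manifestly preserves the product $s_{E_1}\cdots s_{E_r}$. Hence $\cox(\mathcal{W})$ does not depend on the choice of complete exceptional sequence, and in particular we may always take the sequence of simple objects of $\mathcal{W}$ in an order making them exceptional.

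For the membership $\cox(\mathcal{W})\in\NC_{\vec{\Delta}}$, I would appeal to Schofield's perpendicular-category construction to extend any complete exceptional sequence of $\mathcal{W}$ to a complete exceptional sequence $E_1,\ldots,E_r,F_{r+1},\ldots,F_n$ of all of $\mod(k\vec{\Delta})$. The full product is a Coxeter element of $W_{\vec{\Delta}}$, and since it is written as a product of $n=l(\cox(\vec{\Delta}))$ reflections it is a reduced factorization. Using well-definedness again, this Coxeter element can be identified with $\cox(\vec{\Delta})$, so $\cox(\mathcal{W})$ is a prefix of a reduced factorization of $\cox(\vec{\Delta})$, forcing $\id\leq\cox(\mathcal{W})\leq\cox(\vec{\Delta})$. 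For bijectivity, the inverse sends $w\in\NC_{\vec{\Delta}}$, written as $w=s_{E_1}\cdots s_{E_r}$ for positive roots $E_i$ (indecomposable modules via Gabriel's theorem), to the smallest wide subcategory containing $E_1,\ldots,E_r$. Injectivity follows because the simples of $\mathcal{W}$ can be recovered from $\cox(\mathcal{W})$ as the reflections appearing in any reduced factorization; surjectivity follows once one checks that the positive roots arising in such a factorization really form an exceptional sequence of indecomposables.

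The main obstacle will be the translation between mutation of exceptional sequences and the Hurwitz move on reflection factorizations, together with the verification that every reduced factorization of an element $w\leq\cox(\vec{\Delta})$ corresponds term-by-term to a genuine exceptional sequence of indecomposable modules (not merely to an abstract tuple of positive roots). This is the step that uses the Dynkin assumption in an essential way, via the compatibility between the combinatorics of the root system and the representation theory of $k\vec{\Delta}$; without it, the bridge between $\NC_{\vec{\Delta}}$ and wide subcategories cannot be built.
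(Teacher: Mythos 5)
This theorem is not proved in the paper at all: it is quoted from Ingalls--Thomas and used as a black box (the \qed marks a cited result), so there is no internal argument to compare yours with. Your outline is essentially the route taken in the literature: wide subcategories of a Dynkin $\mod(k\vec{\Delta})$ are module categories of smaller Dynkin quivers, mutation of exceptional sequences matches the Hurwitz move on reflection factorizations (braid-group transitivity after Crawley--Boevey/Ringel), absolute length and the prefix characterization of the order give $\id\leq\cox(\mathcal{W})\leq\cox(\vec{\Delta})$ after completing the sequence via perpendicular categories. That part of the plan is sound.

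Two genuine gaps remain. First, your injectivity argument does not work as stated: the set of reflections occurring in a reduced factorization of $w$ is not an invariant of $w$. Already in type $A_2$ the Coxeter element (a $3$-cycle in $\mathcal{S}_3$) has three reduced reflection factorizations, and all three unordered pairs of transpositions occur among them; so ``the simples of $\mathcal{W}$ can be recovered as the reflections appearing in any reduced factorization'' is false. Injectivity must be obtained differently, e.g.\ from the Brady--Watt fact that an element $w\leq\cox(\vec{\Delta})$ is determined by its fixed (equivalently moved) space, combined with the observation that $\mathcal{W}$ is determined by the span of the dimension vectors of its simples, or by verifying directly that your candidate inverse composed with $\cox$ is the identity on wide subcategories. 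Second, the step you yourself flag as ``the main obstacle'' --- that every reduced factorization of every $w\leq\cox(\vec{\Delta})$ is realized term-by-term by an exceptional sequence of indecomposable modules --- is precisely the nontrivial content of the theorem (in the literature it is the Igusa--Schiffler correspondence between complete exceptional sequences and reduced factorizations of the Coxeter element, fed into an induction through perpendicular categories for general $w$). As written you assume it rather than prove it, so the proposal is a reasonable plan in the spirit of the original proof, but not yet a proof.
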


\begin{Rem} 
We may regard $\mod(k\vec{\Delta})$ as an exact category (in the sense of Quillen \cite{Quillen}) and introduce the concept of a thick subcategory also for these categories. We say that a subcategory is thick provided it is closed under extensions, kernels of epimorphisms, cokernels of monomorphisms and taking direct summands. For the module category of a hereditary algebra one can show that a subcategory is wide if and only if it is thick. (See for example \cite{Dichev}).
\end{Rem}

\section{Thick subcategories of finite algebraic triangulated categories}
Now, we want to connect the abelian with the triangulated world. The crucial link for this is given by Amiot's Theorem \ref{alg_triangulated} mentioned above and a theorem by K. Br\"uning. 

Theorem \ref{alg_triangulated} yields a first correspondence. In order to classify the thick subcategories of an algebraic triangulated category which is triangle equivalent to an orbit category $\der(\mod(k\q{\Delta}))/\Phi$, we study the thick subcategories of the orbit category. 

Thus, it might be useful to consider the thick subcategories of the bounded derived category. This is done by K. Br\"uning.
\begin{Thm}[Br\"uning \cite{Bruening}] \label{Bruening}
Let $\mathcal{A}$ be a hereditary abelian category. The assignments 
$$ f\colon\mathcal{C} \mapsto \{H^{0}C \mid C \in \mathcal{C}\} \ \text{and} \ g\colon\mathcal{M} \mapsto \{C \in \der(\mathcal{A}) \mid H^{n}C\in \mathcal{M} \ \forall n \in \mathbb{Z}\}$$
\noindent
induce mutually inverse bijections between
\begin{itemize}
 \item the class of thick subcategories of (the triangulated category) $\der(\mathcal{A})$, and
 \item the class of wide subcategories in $\mathcal{A}$. \qed
\end{itemize}
\end{Thm}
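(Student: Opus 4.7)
My plan is to leverage the key structural consequence of heredity: every bounded complex $C \in \der(\mathcal{A})$ decomposes as $C \cong \bigoplus_{n} H^n(C)[-n]$. This is standard in the hereditary setting because $\Ext^i(X,Y)=0$ for all $X,Y \in \mathcal{A}$ and $i \geq 2$ kills the obstructions (in the canonical truncation/Postnikov filtration) to splitting off the cohomology objects one at a time. Once this splitting is in hand, a thick subcategory $\mathcal{C}$ of $\der(\mathcal{A})$ is entirely determined by which stalk complexes (equivalently, objects of $\mathcal{A}$) it contains, and the claimed bijection becomes very natural.

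First I would verify that $f$ lands in wide subcategories. Given $M = H^0 C \in f(\mathcal{C})$, the splitting makes $M[0]$ a direct summand of $C$, so $M[0]\in\mathcal{C}$ by closure under summands; shift closure then yields $M[n] \in \mathcal{C}$ for every $n$. For a morphism $\alpha\colon M \to N$ between objects of $f(\mathcal{C})$, complete $M[0] \xrightarrow{\alpha} N[0]$ to a triangle in $\der(\mathcal{A})$; the cone has cohomology only in two consecutive degrees, namely $\ker\alpha$ in degree $-1$ and $\mathrm{coker}\,\alpha$ in degree $0$, and by heredity splits as $(\ker\alpha)[1] \oplus (\mathrm{coker}\,\alpha)[0]$. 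Since the cone lies in $\mathcal{C}$, both summands do, so both kernel and cokernel appear in $f(\mathcal{C})$. Extensions are handled analogously by turning a short exact sequence of $\mathcal{A}$ into a distinguished triangle.

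Next I would verify that $g$ is well-defined. The key point is that for a triangle $X \to Y \to Z \to X[1]$ with $X, Z \in g(\mathcal{M})$, the long exact cohomology sequence produces short exact sequences
\[ 0 \to \mathrm{coker}\bigl(H^{n-1}Z \to H^{n}X\bigr) \to H^{n}Y \to \ker\bigl(H^{n}Z \to H^{n+1}X\bigr) \to 0, \]
whose outer terms lie in $\mathcal{M}$ because $\mathcal{M}$ is exact abelian in $\mathcal{A}$; extension closure then forces $H^n Y \in \mathcal{M}$. Stability of $g(\mathcal{M})$ under shifts and direct summands is immediate from the corresponding properties of cohomology and $\mathcal{M}$.

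For the two composite identities, $f(g(\mathcal{M}))=\mathcal{M}$ is immediate by testing against the stalk complexes $M[0]$, which lie in $g(\mathcal{M})$ and have $H^0 = M$. For $g(f(\mathcal{C}))=\mathcal{C}$, any $C$ with $H^n C \in f(\mathcal{C})$ for all $n$ has $(H^n C)[0]\in\mathcal{C}$ by the argument of the second step, hence $(H^n C)[-n]\in\mathcal{C}$ after shifting; the splitting lemma then recovers $C$ as a finite direct sum of elements of $\mathcal{C}$, which lies in $\mathcal{C}$ by triangulated closure. The conceptual obstacle is genuinely confined to the opening splitting statement: once that decomposition is available, the rest is a bookkeeping exercise between triangles and long exact sequences. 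Without heredity the splitting fails and a qualitatively different argument would be needed.
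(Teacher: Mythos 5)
The paper itself gives no proof of Theorem \ref{Bruening}: it is quoted from Br\"uning's article and marked as known, so there is no internal argument to compare yours against. Your proof is correct and is essentially the standard argument underlying the cited result: heredity forces every bounded complex to split as the direct sum of its shifted cohomology objects, and once that is in place the assignments $f$ and $g$ are matched up by bookkeeping with stalk complexes, exactly as you do. Two small points deserve to be spelled out rather than called immediate: closure of $g(\mathcal{M})$ under direct summands needs the fact that a wide subcategory $\mathcal{M}$ is itself closed under direct summands in $\mathcal{A}$ (realise the summand as the kernel of the complementary idempotent, a morphism between objects of $\mathcal{M}$, and use that kernels in $\mathcal{M}$ agree with those in $\mathcal{A}$); and the inclusion $\mathcal{C}\subseteq g(f(\mathcal{C}))$ should be recorded alongside the one you prove, though it is immediate because $H^{n}C=H^{0}(C[n])$ and $\mathcal{C}$ is closed under shifts.
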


To complete our chain of correspondences, we finally have to understand the relationship between the thick subcategories of $\der(\mod(k\q{\Delta}))$ and the thick subcategories of the orbit category $\der(\mod(k\q{\Delta}))/\Phi$. Therefore, we have to study the triangulated structure of the orbit category. 

\begin{Def}
Let $\mathcal{T}$ be an additive triangulated category, let $F\colon\mathcal{T} \rightarrow \mathcal{T}$ be an automorphism and let $\langle F \rangle$ be the group of automorphisms generated by $F$. 
The \emph{orbit category} $\mathcal{T}/F=\mathcal{T}/\langle F \rangle$ has the same objects as $\mathcal{T}$ and the morphisms $X \rightarrow Y$ are in bijection to $$\bigoplus_{n\in \mathbb{Z}}\Hom_{\mathcal{T}}(X,F^{n}Y).$$ 
The composition of morphisms is defined in a natural way (compare \cite{Cibils}).
\end{Def}

The triangulated structure of this category is not at all obvious. It is not clear if we can endow it with a triangulated structure such that the canonical projection is a triangle functor. B. Keller gives a positive answer assuming some conditions on the triangulated category which are fulfilled by our orbit category. 

\begin{Thm}[Keller \cite{Keller}] \label{orbitcat}
Let $\mathcal{H}$ be a connected hereditary abelian category admitting a tilting object. Let $\mathcal{T}=\der(\mathcal{H})$, let $F\colon \mathcal{T} \rightarrow \mathcal{T}$ be an automorphism, and let $S\colon \mathcal{T} \rightarrow \mathcal{T}$ be the suspension.
Assume that the following hypotheses hold:
\begin{itemize}
\item For each indecomposable $U$ of $\mathcal{H}$ there are only finitely many integers $i$ such that $F^{i}U$ lies in $\mathcal{H}$.
\item There is an integer $N \geq 0$ such that the $\langle F \rangle$-orbit of each indecomposable of $\mathcal{T}$ contains an object $S^{n}U$ for some $0 \leq n \leq N$ and some indecomposable object $U$ of $\mathcal{H}$.
\end{itemize}
Then, the orbit category $\mathcal{T}/F$ admits a triangulated structure such that the projection $\mathcal{T} \rightarrow \mathcal{T}/F$ is triangulated. 
\end{Thm}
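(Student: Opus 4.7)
The plan is to realise $\mathcal{T}/F$ as a full triangulated subcategory of a larger triangulated category constructed via differential graded (DG) methods. The naive orbit category of a triangulated category is almost never triangulated, so one cannot hope to transport triangles along the projection directly; instead one must lift the entire situation to the DG level and only at the very end descend to the additive orbit.

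First I would choose a DG enhancement $\mathcal{A}$ of $\mathcal{T}$, i.e.\ a small DG category such that $\per(\mathcal{A})$ is triangle equivalent to $\mathcal{T}$; such an enhancement exists because $\mathcal{H}$ is hereditary with a tilting object, so $\mathcal{T}$ is algebraic and in fact equivalent to the perfect derived category of a finite-dimensional algebra (one may take $\mathcal{A}$ to be a DG version of that algebra). Next I would lift the automorphism $F$ to a DG bimodule $X$ over $\mathcal{A}$ which induces $F$ on $\per(\mathcal{A})$; the existence of such a lift uses that $F$ is a triangle auto-equivalence of an algebraic triangulated category. Then I would form the DG orbit category $\mathcal{B}=\mathcal{A}/X$ whose Hom complexes are defined as
\[
\Hom_{\mathcal{B}}(M,N)=\bigoplus_{n\in\mathbb{Z}} \Hom_{\mathcal{A}}(M,X^{\otimes n}\otimes N),
\]
with an appropriate twisted differential, and consider its perfect derived category $\per(\mathcal{B})$, which is automatically triangulated.

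The central step is to show that the obvious $k$-linear functor from the additive orbit category $\mathcal{T}/F$ into $\per(\mathcal{B})$ is fully faithful. Full faithfulness follows from a computation of morphism spaces in $\per(\mathcal{B})$ which produces exactly the sum $\bigoplus_n \Hom_{\mathcal{T}}(X,F^n Y)$; this computation uses the first hypothesis (finiteness of $\{i : F^i U\in\mathcal{H}\}$) to guarantee that the relevant sums are finite and that no unwanted higher Ext groups appear. After this, I would verify that the essential image of $\mathcal{T}/F$ inside $\per(\mathcal{B})$ is stable under the suspension and under taking cones of morphisms; here the second hypothesis is decisive, because it ensures that every indecomposable of $\per(\mathcal{B})$ is, up to $\langle F\rangle$-orbit, represented by a bounded object of $\mathcal{H}$ shifted at most $N$ times, so that cones formed inside $\per(\mathcal{B})$ remain in the image of $\mathcal{T}/F$. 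Once these closure properties are established, $\mathcal{T}/F$ inherits a unique triangulated structure from $\per(\mathcal{B})$, and the projection $\mathcal{T}\to\mathcal{T}/F$ is by construction a triangle functor because it factors as $\mathcal{T}\simeq\per(\mathcal{A})\to\per(\mathcal{B})$ through the induction-of-scalars along $\mathcal{A}\to\mathcal{B}$, which is a triangle functor.

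The main obstacle I anticipate is the fully faithful step together with the closure under cones: both require delicate homological control over $\per(\mathcal{B})$, and it is precisely here that the two hypotheses on $F$ are used in an essential way. Without them the Hom-computation in $\per(\mathcal{B})$ would produce additional terms beyond the desired direct sum, and the image of $\mathcal{T}/F$ would fail to be triangulated. The rest of the argument, once these two points are in place, is a formal transfer of the triangulated structure along a fully faithful functor whose image is stable under suspension and cones.
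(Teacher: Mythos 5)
Your proposal follows essentially the same route as Keller's proof, which the paper itself only sketches: lift everything to a dg enhancement, form the dg orbit category $\mathcal{B}$, embed the additive orbit category into the triangulated category $\mathcal{D}\mathcal{B}$ via the representable functors, and use the two hypotheses to show that the cone of any morphism between objects in the image again lies in the image (via $\pi_{\ast}$, its left adjoint $\pi^{\ast}$, and the decomposition of $\pi_{\ast}(E)$ into finitely many $\langle F \rangle$-orbits of shifted indecomposables of $\mathcal{H}$), after which the triangulated structure is transferred formally. The only minor bookkeeping discrepancy is that full faithfulness of the embedding is essentially formal (Yoneda plus compactness of the representables), whereas the first hypothesis is really used to guarantee that $\pi_{\ast}$ of the cone lies in $\mathcal{D}(\mod(A))$ so that the second hypothesis can be applied; this does not affect the correctness of your plan.
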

We recall the idea of Keller's proof since we will need some aspects of it later on.
\begin{proof}
The idea is that we embed the orbit category into a bigger category which is triangulated. Then, we extend a morphism in the orbit category to a triangle in the ambient category and show that the extension also lies in the orbit category.

The ambient triangulated category is given by the derived category of a differential graded category. For the basics about dg categories we refer to \cite{Kellerdg}.

Fix a tilting object $T$ of $\mathcal{H}$ and let $A=\End(T)$ be the endomorphism algebra. Then $\der(\mod(A))$ is triangle equivalent to $\der(\mathcal{H})$.
Let $\mathcal{A}=\mathcal{C}_{dg}^{b}(\proj(A))$ be the dg category of bounded complexes of finitely generated projective $A$-modules. We assume that $F$ is a standard equivalence, i.e.\ it is isomorphic to the derived tensor product by a complex of $A$-$A$-bimodules and this defines a dg functor $F$ as well. Let $\mathcal{B}$ be the dg orbit category of $\mathcal{A}$ with respect to $F$. This yields 
$$\mathcal{D}(\Mod(A))\cong \mathcal{D}\mathcal{A} \quad \text{and} \quad  H^{0}\mathcal{B} \cong \der(\mod(A))/F.$$  

Now let $\mathcal{M}$ be the triangulated subcategory of $\mathcal{D}\mathcal{B}$ generated by the representable functors $\mathcal{B}(-,X)=:\hat{X}$ for $X \in \mathcal{B}$. Embed the orbit category 
$$\der(\mod(A))/F \cong H^{0}\mathcal{B}  \hookrightarrow \mathcal{M}\subseteq \mathcal{D}\mathcal{B}, \  X \mapsto \hat{X}.$$

Consider the projection $\pi\colon \mathcal{A} \rightarrow \mathcal{B}$, the restriction $\pi_{\ast}\colon\mathcal{D}\mathcal{B} \rightarrow \mathcal{D}(\Mod (A))$ along $\pi$ and the left adjoint $\pi^{\ast}\colon \mathcal{D}(\Mod (A)) \rightarrow \mathcal{D}\mathcal{B}$ to $\pi_{\ast}$.

$\pi^{\ast}$ restricts to the canonical projection $\der(\mod(A)) \rightarrow \der(\mod(A))/F$. Hence, a morphism in $\der(\mod(A))/F$ is of the form $\pi^{\ast}\hat{X}\rightarrow \pi^{\ast}\hat{Y}$ where $X,Y \in \mathcal{A}$ and $\hat{X}=\mathcal{A}(-,X)$. Extend this to a triangle in $\mathcal{M}$ and apply $\pi_{\ast}$ to this triangle. We get a triangle 
$$ \pi_{\ast}\pi^{\ast}(\hat{X}) \rightarrow \pi_{\ast}\pi^{\ast}(\hat{Y}) \rightarrow \pi_{\ast}(E) \rightarrow S \pi_{\ast}\pi^{\ast}(\hat{X})$$
in $\mathcal{D}(\Mod(A))$. For the elements of the triangle we have
$$\pi_{\ast}\pi^{\ast}(\hat{X})\cong \pi_{\ast}(\mathcal{B}(-,\pi(X)))= \mathcal{B}(\pi(-),\pi(X)) \cong \bigoplus_{n \in \mathbb{Z}}\mathcal{A}(F^{n}-,X)= \bigoplus_{n \in \mathbb{Z}}F^{n}(\hat{X}).$$

By the first assumption of the theorem these objects lie in $\mathcal{D}(\mod(A))$ and so does $\pi_{\ast}(E)$. Using this finiteness property together with the second assumption of the theorem, we can show that $\pi_{\ast}(E)$ is a sum of finitely many $\langle F \rangle$-orbits of shifted indecomposables $Z_{1}, \ldots, Z_{m}$ of $\mathcal{H}$. The adjoint of the inclusion $Z:=\bigoplus_{i=1}^{m}Z_{i} \hookrightarrow \pi_{\ast}(E)$ yields an isomorphism $\pi^{\ast}(Z)\cong E$ where $Z \in \der(\mod(A))$. 
\end{proof}

\begin{Thm} \label{maintheorem}
Let $\mathcal{H}$ be a connected hereditary abelian category admitting a tilting object. Let $\Phi\colon\der(\mathcal{H}) \rightarrow \der(\mathcal{H})$ be an automorphism such that the hypotheses of Keller's theorem hold. Then, the canonical projection $\pi\colon\der(\mathcal{H})\rightarrow \der(\mathcal{H})/\Phi$ induces a bijective correspondence between 
\begin{itemize}
 \item the thick $\langle \Phi \rangle$-invariant subcategories of $\der(\mathcal{H})$, and
\item the thick subcategories of $\der(\mathcal{H})/\Phi$.
\end{itemize}
In particular, this holds for $\mathcal{H}=\mod(k\q{\Delta})$ where $\q{\Delta}$ is a Dynkin quiver, and a group of automorphisms induced by a weakly admissible group of $\mathbb{Z}\Delta$. 
\end{Thm}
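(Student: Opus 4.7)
The plan is to define mutually inverse maps between the two classes of subcategories and to verify the only non-formal step using the construction employed in the proof of Theorem~\ref{orbitcat}. Given a thick subcategory $\mathcal{S}\subseteq \der(\mathcal{H})/\Phi$, set $\pi^{-1}(\mathcal{S})$ to be the full subcategory of $\der(\mathcal{H})$ whose objects are those $X$ with $\pi(X)\in\mathcal{S}$; conversely, given a thick $\Phi$-invariant $\mathcal{C}\subseteq \der(\mathcal{H})$, let $\pi(\mathcal{C})$ be the full subcategory of the orbit category whose objects are isomorphic to some $\pi(X)$ with $X\in\mathcal{C}$. That $\pi^{-1}(\mathcal{S})$ is a thick $\Phi$-invariant subcategory is a formal consequence of $\pi$ being a triangle functor together with $\pi\circ\Phi$ agreeing with $\pi$ on objects: preimages of exact triangles under a triangle functor are exact, preimages respect direct summands, and $\pi(\Phi X)=\pi(X)$ makes $\Phi$-invariance automatic.

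The heart of the proof is verifying that $\pi(\mathcal{C})$ is thick. Closure under shifts is immediate. For closure under cones I follow the construction used in Keller's proof of Theorem~\ref{orbitcat}: a morphism $f\colon\pi X\to\pi Y$ in the orbit category corresponds, via the adjunction $(\pi^{\ast},\pi_{\ast})$, to a morphism $\tilde f\colon X\to\bigoplus_{n}F^{n}Y$ in $\der(\mathcal{H})$ which has only finitely many nonzero components, thanks to the finiteness hypotheses of Theorem~\ref{orbitcat}. Extending $\tilde f$ to an exact triangle $X\to\bigoplus_{n}F^{n}Y\to Z\to SX$ in $\der(\mathcal{H})$, Keller's argument identifies $\pi Z$ with a cone of $f$ in the orbit category. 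Since $\mathcal{C}$ is $\Phi$-invariant, every $F^{n}Y$ lies in $\mathcal{C}$, hence $\bigoplus_{n}F^{n}Y\in\mathcal{C}$ (finitely many summands); thickness of $\mathcal{C}$ then forces $Z\in\mathcal{C}$, so the cone of $f$ lies in $\pi(\mathcal{C})$. Closure of $\pi(\mathcal{C})$ under direct summands is handled analogously: a summand of $\pi X$ in the orbit category corresponds, after passing through the ambient dg construction, to a summand of $\bigoplus_{n}F^{n}X$ in $\der(\mathcal{H})$, which lies in $\mathcal{C}$ by thickness and $\Phi$-invariance.

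Bijectivity is then straightforward. The inclusion $\mathcal{C}\subseteq \pi^{-1}(\pi(\mathcal{C}))$ is tautological; for the reverse, if $\pi X\cong\pi Y$ with $Y\in\mathcal{C}$, then by Krull--Schmidt each indecomposable summand $X_{i}$ of $X$ satisfies $X_{i}\cong F^{n_{i}}Y_{j}$ for some indecomposable summand $Y_{j}$ of $Y$, and $\Phi$-invariance of $\mathcal{C}$ places $X_{i}$ in $\mathcal{C}$; hence $X\in\mathcal{C}$. The equality $\pi(\pi^{-1}(\mathcal{S}))=\mathcal{S}$ follows at once because $\pi$ is the identity on objects. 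I expect the cone step of paragraph two to be the main obstacle: the orbit category admits no triangulated structure a priori, and the identification of a cone of $f$ with $\pi Z$ genuinely requires the dg-enhancement underlying Keller's theorem, together with both finiteness hypotheses to ensure that the cone built in $\mathcal{D}(\Mod A)$ descends to a well-defined object of $\der(\mathcal{H})$ lying in $\mathcal{C}$. The remainder of the argument is formal.
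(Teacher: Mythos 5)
The gap is in your central cone step: the identification of a cone of $f\colon\pi X\to\pi Y$ with $\pi(Z)$, where $Z$ is the cone of the adjoint morphism $\tilde f\colon X\to\bigoplus_{|n|\le N}F^{n}Y$, is not what Keller's argument provides, and it is false in general. Under the canonical isomorphism $\pi\bigl(\bigoplus_{|n|\le N}F^{n}Y\bigr)\cong\pi(Y)^{\oplus(2N+1)}$ the morphism $\pi(\tilde f)$ becomes the tuple $(f_{n})_{n}$ of the homogeneous components of $f$, so $\pi(Z)\cong\mathrm{cone}\bigl((f_{n})_{n}\colon\pi X\to\pi Y^{\oplus(2N+1)}\bigr)$, which is not a cone of $f$ itself. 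Concretely, if $f$ has an invertible degree-zero component $f_{0}$ and a nonzero component $f_{1}\in\Hom_{\der(\mathcal{H})}(X,FY)$, then $f$ is an isomorphism in the orbit category (compose with $\pi(f_{0})^{-1}$ to get $\mathrm{id}+\varepsilon$ with $\varepsilon$ of degree one, hence nilpotent), so its cone is $0$; but $\tilde f=(f_{0},f_{1})$ is a split monomorphism with cone $FY$, so $\pi(Z)\cong\pi(Y)\neq0$. What Keller's proof actually shows is that the cone $E$ of $\pi^{\ast}\hat X\to\pi^{\ast}\hat Y$, computed in the triangulated hull $\mathcal{M}$, satisfies $\pi_{\ast}E\cong\mathrm{cone}\bigl(\bigoplus_{n}F^{n}X\to\bigoplus_{n}F^{n}Y\bigr)$ and is isomorphic to $\pi^{\ast}(Z)$ for a $Z\in\der(\mathcal{H})$ assembled from orbit representatives of the indecomposable summands of $\pi_{\ast}E$; this does not produce a triangle inside $\der(\mathcal{H})$ with two terms in $\mathcal{C}$ exhibiting $Z$, so thickness of $\mathcal{C}$ cannot be invoked directly. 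Your direct-summand step has the same defect: $\bigoplus_{n}F^{n}X$ is an infinite sum and is not an object of $\der(\mathcal{H})$, so ``lies in $\mathcal{C}$ by thickness'' does not apply to it.

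This is exactly why the paper's proof takes a longer, cohomological route rather than lifting cones: it applies $\pi_{\ast}$ to the given triangle, obtains $\bigoplus_{n}\Phi^{n}X\to\bigoplus_{n}\Phi^{n}Y\to\bigoplus_{n}\Phi^{n}Z\to S\bigoplus_{n}\Phi^{n}X$ in $\mathcal{D}(\Mod(A))$, and passes to the long exact cohomology sequence. The first hypothesis of Keller's theorem makes each $H^{p}$ of these infinite sums a finite sum lying in $H^{0}(\mathcal{C})$, which by Br\"uning's Theorem \ref{Bruening} is a \emph{wide} subcategory of $\mod(A)$; closure under kernels, cokernels and extensions gives $H^{p}(\bigoplus_{n}\Phi^{n}Z)\in H^{0}(\mathcal{C})$, summand-closure gives $H^{p}(Z)\in H^{0}(\mathcal{C})$ for all $p$, and Br\"uning's inverse assignment then yields $Z\in\mathcal{C}$. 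Note that your argument never uses that $\mathcal{H}$ is hereditary, whereas the paper's proof does so essentially (via Br\"uning); that is another indication the shortcut cannot stand as written. The formal parts of your proposal (thickness of $\pi^{-1}(\mathcal{S})$, the mutual-inverse verification via Krull--Schmidt and $\Phi$-invariance) are fine, but the cone and summand steps need to be replaced by an argument of the above kind.
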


\begin{proof}
Let $A=\End(T)$ where $T$ is the tilting object of $\mathcal{H}$.

Let $\mathcal{S}$ be a thick subcategory of $\der(\mod(A))/\Phi$. Then, $\pi^{-1}(\mathcal{S})$ is immediately seen to be thick in $\der(\mod(A))$ since $\pi$ is a triangle functor.

Now let $\mathcal{S}$ be a thick $\langle \Phi \rangle$-invariant subcategory of $\der(\mod(A))$. We want to show that $\pi(\mathcal{S})$ is thick in the orbit category. This direction is not that easy. As seen above it is not obvious how to lift a given triangle 
$$\pi(X) \rightarrow \pi(Y) \rightarrow \pi(Z) \rightarrow S \pi(X)$$
with $\pi(X), \pi(Y) \in \pi(\mathcal{S})$ in $\der(\mod(A))/\Phi$ to a triangle $ X \rightarrow Y \rightarrow Z \rightarrow S X$ in $\der(\mod(A))$. But as in the proof of Keller's Theorem we may apply $\pi_{\ast}$ and obtain a triangle 
$$ \bigoplus_{n \in \mathbb{Z}}\Phi^{n}(X) \rightarrow \bigoplus_{n \in \mathbb{Z}}\Phi^{n}(Y) \rightarrow \bigoplus_{n \in \mathbb{Z}}\Phi^{n}(Z) \rightarrow S \bigoplus_{n \in \mathbb{Z}}\Phi^{n}(X)$$
in $\mathcal{D}(\mod(A))$. This yields a long exact sequence 
\begin{align*}
 \ldots &\rightarrow  H^{p}(\bigoplus_{n \in \mathbb{Z}}\Phi^{n}(X)) \rightarrow H^{p}(\bigoplus_{n \in \mathbb{Z}}\Phi^{n}(Y)) \rightarrow H^{p}(\bigoplus_{n \in \mathbb{Z}}\Phi^{n}(Z)) \\
&\rightarrow H^{p+1}(\bigoplus_{n \in \mathbb{Z}}\Phi^{n}(X)) \rightarrow H^{p+1}(\bigoplus_{n \in \mathbb{Z}}\Phi^{n}(Y)) \rightarrow \ldots
\end{align*}
in $\mod(A)$. Consider the terms of this sequence.
\begin{align*} H^{p}(\bigoplus_{n \in \mathbb{Z}}\Phi^{n}(X))&= H^{0}(S^{p}(\bigoplus_{n \in \mathbb{Z}}\Phi^{n}(X)))
= H^{0}(\bigoplus_{n \in \mathbb{Z}}\Phi^{n}(S^{p}X))\\
&= H^{0}(\bigoplus_{|n|<r}\Phi^{n}(S^{p}X))
\end{align*}
for some $r \in \mathbb{N}$ large enough. We find this $r$ because of the first assumption of Keller's Theorem.
Since $\mathcal{S}$ is thick and $\langle \Phi \rangle$-invariant and $X \in \mathcal{S}$, $\bigoplus_{|n|<r}\Phi^{n}(S^{p}X)$ lies in $\mathcal{S}$ for each $p \in \mathbb{Z}$ and therefore,  $H^{p}(\bigoplus_{n \in \mathbb{Z}}\Phi^{n}(X))$ lies in $H^{0}(\mathcal{S})$ for each $p \in \mathbb{Z}$. 

By Br\"uning's Theorem \ref{Bruening}, $H^{0}(\mathcal{S})$ is a wide subcategory of $\mod(A)$. Hence, the cokernel of $$H^{p}(\bigoplus_{n \in \mathbb{Z}}\Phi^{n}(X)) \rightarrow H^{p}(\bigoplus_{n \in \mathbb{Z}}\Phi^{n}(Y))$$ as well as the kernel of $$H^{p+1}(\bigoplus_{n \in \mathbb{Z}}\Phi^{n}(X)) \rightarrow H^{p+1}(\bigoplus_{n \in \mathbb{Z}}\Phi^{n}(Y))$$ lie in $H^{0}(\mathcal{S})$ for each $p \in \mathbb{Z}$. $H^{p}(\bigoplus_{n \in \mathbb{Z}}\Phi^{n}(Z))$ is an extension of these two objects and therefore, it is also an object of $H^{0}(\mathcal{S})$ for each $p \in \mathbb{Z}$. 

$Z$ is a direct summand of $\bigoplus_{n \in \mathbb{Z}}\Phi^{n}Z$ and hence, $H^{p}(Z)$ is a direct summand of $H^{p}(\bigoplus_{n \in \mathbb{Z}}\Phi^{n}Z)$ for each $p$. Since $H^{0}(\mathcal{S})$ is closed under direct summands, we have that $H^{p}(Z) \in H^{0}(\mathcal{S})$ $\forall p$. Applying Br\"uning's correspondence again, we conclude that $Z \in \mathcal{S}$ and thus $\pi(Z) \in \pi(\mathcal{S})$.
\end{proof}

The theorem raises the question: What are the $\langle \Phi \rangle$-invariant sub\-categories of $\der(\mod(k\q{\Delta}))$ or respectively, which noncrossing partitions are in bijection to them?

Recall that we have a bijective correspondence between the thick subcategories of $\der(\mod(k\q{\Delta}))$ and the noncrossing partitions associated to $\vec{\Delta}$. This correspondence is given by the composition of the correspondences of Br\"uning and Ingalls/Thomas, i.e.\ it is given by the map
\begin{align*}
\tilde{\cox}\colon\{\text{thick subcategories of} \ \der(\mod(k\q{\Delta})) \} &\rightarrow \NC_{\vec{\Delta}}  \\
\mathcal{S} &\mapsto \cox(\mathcal{S} \cap \mod(k\vec{\Delta})). 
\end{align*} 

The translation $\tau$ on $\mathbb{Z}\Delta$ induces an automorphism of $\der(\mod(k\q{\Delta}))$ which is given as follows. Since $\der(\mod(k\q{\Delta})) \cong \mathcal{K}^{b}(\proj(k\q{\Delta}))$, we can understand each element of $\der(\mod(k\q{\Delta}))$ as its projective resolution $$P^{\bullet} \in \mathcal{K}^{b}(\proj(k\q{\Delta})).$$ Then, $\tau(P^{\bullet})=S^{-1}\nu(P^{\bullet})$ where $\nu$ is the Nakayama functor (see \cite{Happel}).

The following Proposition describes how $\tau$ behaves on the noncrossing partitions.

\begin{Prop} \label{tau_action}
Let $\mathcal{S}$ be a thick subcategory of $\der(\mod(k\q{\Delta}))$. Then 
$$\tilde{\cox}(\tau(\mathcal{S}))=\cox(\vec{\Delta}) \tilde{\cox}(\mathcal{S}) \cox(\vec{\Delta})^{-1}$$ 
\end{Prop}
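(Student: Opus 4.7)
The plan is to compute $\tilde{\cox}(\tau(\mathcal{S}))$ by producing a complete exceptional sequence that thickly generates $\tau(\mathcal{S})$ and then passing to the Coxeter group via the action of $\tau$ on $K_0$. I set $\mathcal{W}=H^0(\mathcal{S})=\mathcal{S}\cap\mod(k\q{\Delta})$ and choose simples $S_1,\ldots,S_r$ of $\mathcal{W}$ in an order forming a complete exceptional sequence, so that by Theorems~\ref{Ingalls} and~\ref{Bruening},
$$\tilde{\cox}(\mathcal{S})=s_{S_1}\cdots s_{S_r}.$$
Since $\mathcal{S}$ is recovered from $\mathcal{W}$ under Br\"uning's bijection, the sequence $(S_1,\ldots,S_r)$ is an exceptional sequence in $\der(\mod(k\q{\Delta}))$ which thickly generates $\mathcal{S}$; applying the triangle auto-equivalence $\tau$ then produces an exceptional sequence $(\tau S_1,\ldots,\tau S_r)$ thickly generating $\tau(\mathcal{S})$.

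The central identity I need to establish is
$$\tilde{\cox}(\tau(\mathcal{S}))=s_{\tau S_1}\cdots s_{\tau S_r},$$
where $s_{\tau S_i}$ is the reflection at the root $[\tau S_i]\in\mathbb{Z}^n$. I argue as follows. Because $\mod(k\q{\Delta})$ is hereditary, each indecomposable of $\der(\mod(k\q{\Delta}))$ is a shift $E[n]$ of an object of $\mod(k\q{\Delta})$; the $\Hom$- and $\Ext^1$-vanishing conditions that make $(\tau S_i)$ exceptional in $\der$ then force the unshifted sequence $(E_1,\ldots,E_r)$ with $\tau S_i=E_i[n_i]$ to be an exceptional sequence in $\mod(k\q{\Delta})$. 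As thick subcategories are closed under shifts, $(E_1,\ldots,E_r)$ also thickly generates $\tau(\mathcal{S})$, so it is a complete exceptional sequence of $H^0(\tau(\mathcal{S}))$. Theorem~\ref{Ingalls} then gives $\tilde{\cox}(\tau(\mathcal{S}))=s_{E_1}\cdots s_{E_r}$, and since $s_v=s_{-v}$ we have $s_{E_i}=s_{\tau S_i}$.

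To finish I invoke that $\tau$ acts on $K_0(\der(\mod(k\q{\Delta})))\cong\mathbb{Z}^n$ as the Coxeter transformation, i.e.\ as multiplication by the Coxeter element $c=\cox(\vec{\Delta})$ of $W_{\vec{\Delta}}$, together with the standard conjugation formula $w\,s_v\,w^{-1}=s_{w(v)}$. This yields $s_{\tau S_i}=s_{c[S_i]}=c\,s_{S_i}\,c^{-1}$, so
$$\tilde{\cox}(\tau(\mathcal{S}))=\prod_{i=1}^{r}\bigl(c\,s_{S_i}\,c^{-1}\bigr)=c\,(s_{S_1}\cdots s_{S_r})\,c^{-1}=\cox(\vec{\Delta})\,\tilde{\cox}(\mathcal{S})\,\cox(\vec{\Delta})^{-1}.$$
The hard part will be justifying the flexibility step in the middle paragraph, namely that $\tilde{\cox}$ may be evaluated on \emph{any} generating exceptional sequence rather than only on the simples of the heart; this relies essentially on hereditariness (to guarantee that any exceptional sequence in $\der$ is, up to shift, an exceptional sequence in $\mod(k\q{\Delta})$) and on the invariance $s_v=s_{-v}$ which makes the associated product of reflections insensitive to the chosen shifts.
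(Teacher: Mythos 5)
Your argument is correct and follows essentially the same route as the paper: transport a complete exceptional sequence of simples of $H^0(\mathcal{S})$ through $\tau$, shift the terms back into $\mod(k\vec{\Delta})$ (in the paper this is the explicit replacement of $\tau P(i)=S^{-1}I(i)$ by $I(i)$, with the Hom/Ext-vanishing checked by hand), and then conjugate the reflections using the fact that $\tau$ acts on dimension vectors as $\cox(\vec{\Delta})$, together with $w\,s_v\,w^{-1}=s_{w(v)}$ and $s_v=s_{-v}$. The one step you leave essentially unjustified --- that the transported sequence is \emph{complete} in $H^0(\tau(\mathcal{S}))$, which does not follow from thick generation alone without a further rank or maximality argument --- is exactly what the paper settles by a short contradiction argument (a longer exceptional sequence of simples in $H^0(\tau(\mathcal{S}))$ would pull back through $\tau^{-1}$ to one of length $r+1$ in $H^0(\mathcal{S})$), so that line should be added, but the approach is otherwise the same.
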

\begin{proof}
Let $E_{1}, \ldots, E_{r}$ be a complete exceptional sequence of simple objects in the wide subcategory $\mathcal{S} \cap \mod(k \vec{\Delta})$. We can reorder this sequence in such way that the possible projective modules lie at the end of the sequence: $$E_{1}, \ldots, E_{s-1}, E_{s}=P(i_{s}), \ldots, E_{r}=P(i_{r}).$$
Just preserve the order within the projectives and the non-projectives, respectively. After that, we only have to take care that $\Hom(P,E)=0$ and $\Ext^{1}(P,E)=0$ for $P$ one of the projective modules and $E$ one of the non-projective modules. The second equation is clear and the first one holds by Schur's lemma.

We show that 
$$\tau(E_{1}), \ldots, \tau(E_{s-1}), I(i_{s}), \ldots, I(i_{r})$$
is a complete exceptional sequence in $\tau(\mathcal{S}) \cap \mod(k\vec{\Delta})$. 

First of all, the sequence should really lie in $\tau(\mathcal{S}) \cap \mod(k\vec{\Delta})$. Let $E$ be one of the non-projective modules of the sequence. We may understand it as a complex concentrated in degree zero. Here, the $\tau$-action corresponds to the usual Auslander-Reiten translation in $\mod(k\vec{\Delta})$ and hence $\tau(E)$ is the complex concentrated in degree zero with entry $\tau(E)$ in degree zero. So $\tau(E) \in \tau(\mathcal{S}) \cap \mod(k\vec{\Delta})$. 

Let $P(i)$ be one of the projective modules of the sequence. Then, $\tau(P(i))=S^{-1}I(i)$ and $\tau(P(i)) \in \tau(\mathcal{S})$. This is thick since $\tau$ is a triangle equivalence on $\der(\mod(k\q{\Delta}))$ and hence $I(i)=S(\tau(P(i))) \in \tau(\mathcal{S})$. Thus, $I(i) \in \tau(\mathcal{S}) \cap \mod(k\vec{\Delta})$. 

Next, we have to check the exceptionality of the sequence. Using the fact that $S$, $\nu$ and therefore also $\tau=S^{-1}\nu$ are triangle equivalences on  $\der(\mod(k\q{\Delta}))=\der(k\q{\Delta})$, we  get for $i<j$
\begin{align*}
\Hom_{k\vec{\Delta}}(\tau(E_{j}),\tau(E_{i}))&=\Hom_{\der(k\q{\Delta})}(\tau(E_{j}), \tau(E_{i}))=\Hom_{\der(k\q{\Delta})}(E_{j},E_{i})=0\\
\Ext^{1}(\tau(E_{j}),\tau(E_{i}))&\cong \Hom_{\der(k\q{\Delta})}(\tau(E_{j}),\tau(SE_{i}))
\cong \Hom_{\der(k\q{\Delta})}(E_{j},SE_{i})\\
&\cong\Ext^{1}(E_{j},E_{i})=0.
\end{align*}
There are no non-zero morphisms from one of the injective objects $I(i)$ to one of the objects $\tau(E_{j})$ since $\tau(E_{j})$ is not injective. Moreover,
\begin{align*}
\Ext^{1}(I(i),\tau(E_{j})) &\cong \Hom_{\der(k\q{\Delta})}(I(i),S\tau(E_{j}))
\cong \Hom_{\der(k\q{\Delta})}(S^{-1}I(i), \tau(E_{j}))\\
&\cong \Hom_{\der(k\q{\Delta})}(\tau^{-1}(S^{-1}I(i)),E_{j})
\cong \Hom_{k\q{\Delta}}(P(i),E_{j})=0.
\end{align*}
And lastly, for $k <l$
$$\Hom_{k\vec{\Delta}}(I(i_{l}),I(i_ {k})) \cong \Hom_{k \vec{\Delta}}(P(i_{l}),P(i_{k}))=0,$$
and clearly, there are no non-trivial extensions between two injectives. 

So we have found an exceptional sequence of length $r$ in $\tau(\mathcal{S}) \cap \mod(k\vec{\Delta})$. Suppose it was not complete. Then, we could find $r+1$ simple objects which build an exceptional sequence. As before we can show that this yields an exceptional sequence $Y_{1}, \ldots, Y_{r+1}$ in $\mathcal{S} \cap \mod(k \vec{\Delta})$, a contradiction since $E_{1}, \ldots, E_{r}$ is complete. 

Now we know that $\cox(\vec{\Delta})(\dimvec(M))=\dimvec(\tau(M))$ for $M \in \mod(k \vec{\Delta})$ indecomposable not projective and $\cox(\vec{\Delta})(\dimvec(P(i)))=-\dimvec(I(i))$ (see for example \cite{Assem}). Hence, $$s_{\tau(E_{i})}=\cox(\vec{\Delta}) s_{E_{i}} \cox(\vec{\Delta})^{-1} \quad \text{and} \quad s_{I(i)}=\cox(\vec{\Delta})s_{P(i)}\cox(\vec{\Delta})^{-1}.$$ 

Altogether, we have
$$\tilde{\cox}(\tau(\mathcal{S}))=s_{\tau(E_{1})}\ldots s_{\tau(E_{s-1})} s_{I(i_{s})} \ldots s_{I(i_{r})}
= \cox(\vec{\Delta})\tilde{\cox}(\mathcal{S})\cox(\vec{\Delta})^{-1}. $$ \end{proof} 

Recall that the automorphism $\Phi$ of $\der(\mod(k\q{\Delta}))$ is induced by a weakly admissible group of automorphisms of $\mathbb{Z}\Delta$ generated by some $\phi\tau^r$ (compare Theorem \ref{wagroups}). The previous Proposition controls the case where the order of $\phi$ equals 1, i.e.\ $\typ(\mathcal{T})=(\Delta,r,1)$, but does not help us in the cases where the order of $\phi$ is $>1$. Fortunately, in most cases one may express $\phi$ in terms of $\tau$ and the shift functor $S$. The following Proposition describes $S$ in terms of $\tau$ and the respective automorphism $\phi$. We conclude this from \cite{Gabriel} and \cite{Bialkowski}.
\begin{Prop} \label{Bialkowski}
Let $\Delta$ be a Dynkin graph, let $h_\Delta=m_\Delta+1$ be the Coxeter number and let $S\colon \der(\mod(k\q{\Delta})) \rightarrow \der(\mod(k\q{\Delta}))$ be the shift functor. 
\begin{enumerate}
\item Let $\Delta$ be of the form $A_{n} (n \geq 3 \text{ odd}), D_{n} (n \text{ odd})$ or $E_{6}$, then $S$ is isomorphic to $\phi \tau^{\frac{h_{\Delta}}{2}}$. Here $\phi$ is the automorphism of $\der(\mod(k\q{\Delta}))$ of order $2$ induced by the automorphism $\phi$ of $\mathbb{Z}\Delta$ defined in Theorem \ref{wagroups}. 
\item Let $\Delta$ be of the form $A_{1}, D_{n} (n \text{ even}), E_{7}$ or $E_{8}$, then $S\cong \tau^{\frac{h_{\Delta}}{2}}$.
\item Let $\Delta$ be of the form $A_{n} (n \text{ even})$, then $S \cong \phi \tau^{\frac{m_{\Delta}}{2}}$ where $\phi$ is the automorphism of infinite order discussed in Theorem \ref{wagroups}.
\end{enumerate}
In particular, in all cases $S^{2}\cong \tau^{h_{\Delta}}$.  \qed
\end{Prop}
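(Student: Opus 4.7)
The plan is structural. By Happel's theorem the indecomposable objects of $\der(\mod(k\q{\Delta}))$ correspond bijectively to the vertices of $\mathbb{Z}\Delta$, and both $\tau$ and $S$ act as automorphisms of this translation quiver that commute with each other. The translation-quiver automorphisms of $\mathbb{Z}\Delta$ that commute with $\tau$ are generated by $\tau$ itself together with the (lifted) graph automorphisms of $\Delta$; these lifts are exactly the $\phi$ enumerated in Theorem \ref{wagroups}. Therefore $S$ must be of the shape $\phi^a\tau^b$, and the problem reduces to pinning down $a$ and $b$ in each Dynkin case.

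To fix the exponents I would use the identity $\tau \cong S^{-1}\nu$ recalled just before Proposition \ref{tau_action}, or equivalently $\nu \cong S\tau$, where $\nu$ is the Nakayama functor. On indecomposable projectives one has $\nu(P_i) \cong I_i$, and the displacement in $\mathbb{Z}\Delta$ from the vertex carrying $P_i$ to the vertex carrying $I_i$ is a classical combinatorial datum of each Dynkin type, computed by Gabriel \cite{Gabriel} and Bia\l{}kowski \cite{Bialkowski}. Matching this displacement with the action of $\phi^a\tau^b$ on a single vertex (say the one indexing the simple projective $P_1$) yields exactly the formulas stated in (1)--(3), with $\phi$ of order two in cases (1) and (2) and the infinite-order $\phi$ (satisfying $\phi^2 = \tau$) from Theorem \ref{wagroups} in case (3).

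The final identity $S^2 \cong \tau^{h_\Delta}$ then drops out by squaring, since $\phi$ and $\tau$ commute: in case (2) it is immediate; in case (1), $\phi$ has order two, so $(\phi\tau^{h_\Delta/2})^2 = \tau^{h_\Delta}$; in case (3), $\phi^2 = \tau$ and $h_\Delta = m_\Delta+1$, so $(\phi\tau^{m_\Delta/2})^2 = \tau^{m_\Delta+1} = \tau^{h_\Delta}$. The main obstacle in carrying out this programme is not any derived-categorical subtlety but the case-by-case combinatorial bookkeeping needed to locate $I_i$ relative to $P_i$ in $\mathbb{Z}\Delta$: this is especially delicate for $\Delta = D_n$, where the outcome depends on the parity of $n$ and on which arm of the Dynkin diagram the vertex lies on, and for $\Delta = A_n$ with $n$ even, where one must verify that the correct $\phi$ is necessarily the infinite-order automorphism and that a bare power of $\tau$ cannot suffice, since $h_\Delta = n+1$ is odd in that case.
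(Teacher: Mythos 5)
The paper does not actually prove this proposition: it is stated with a \qed and the formulas are simply quoted, the text saying that they are concluded from \cite{Gabriel} and \cite{Bialkowski}. Your outline is, in substance, a reconstruction of the argument behind those citations, so it is compatible with the paper rather than divergent from it: the reduction of $S$ to the form $\phi^a\tau^b$ via the automorphism group of the translation quiver $\mathbb{Z}\Delta$, the determination of the exponents from $\nu\cong S\tau$ together with the classical location of $I_i$ relative to $P_i$ (which is precisely the combinatorial datum computed in the cited sources), and the squaring argument for $S^2\cong\tau^{h_\Delta}$ (using $\phi^2=\id$ in cases (1)--(2) and $\phi^2=\tau$ in case (3)) are all correct. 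Two points deserve more care than your sketch gives them. First, the description of the $\tau$-commuting automorphisms of $\mathbb{Z}\Delta$ as generated by $\tau$ and the lifted graph automorphisms (with the half-translation $\phi$, $\phi^2=\tau$, replacing the naive lift for $A_n$, $n$ even) is Riedtmann's computation of $\operatorname{Aut}(\mathbb{Z}\Delta)$ and should be invoked, not asserted. Second, and more substantively, your argument identifies the \emph{permutation of vertices} of $\mathbb{Z}\Delta$ induced by $S$; to upgrade this to an isomorphism of functors $S\cong\phi^a\tau^b$, as the proposition states, you need Happel's equivalence of $\operatorname{\ind}$ of the derived category with the mesh category $k(\mathbb{Z}\Delta)$ together with the fact that an auto-equivalence inducing the identity on the Auslander--Reiten quiver is isomorphic to the identity functor (this is where the derived Picard group computations of the literature enter). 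For the way the proposition is used later in the paper --- deciding which thick subcategories are $\langle\phi\tau^r\rangle$-invariant --- only the action on objects matters, so this gap is harmless there, but it should be acknowledged if you claim the statement as written.
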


Before we apply this to the classification, we insert one Proposition which simplifies later calculations. 

\begin{Prop}\begin{enumerate} \label{simplif}
\item Let $s:=\gcd(h_{\Delta},p)$ with $p \in \mathbb{N}$. Then, a thick subcategory $\mathcal{S}$ of $\der(\mod(k\q{\Delta}))$ is $\langle \tau^{p} \rangle$-invariant if and only if it is $\langle \tau^{s} \rangle$-invariant. 
\item Let $s:= r \mod h_{\Delta}$ with $r \in \mathbb{N}$ and let $\phi \colon \der(\mod(k \q{\Delta})) \rightarrow \der(\mod(k \q{\Delta}))$ be  an automorphism. A thick subcategory $\mathcal{S}$ of $\der(\mod(k\q{\Delta}))$ is $\langle \phi\tau^{r} \rangle$-invariant if and only if it is $\langle \phi \tau^{s} \rangle$-invariant. 
\end{enumerate}
\end{Prop}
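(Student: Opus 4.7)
The plan is to exploit Proposition \ref{Bialkowski}, which gives $S^{2}\cong \tau^{h_\Delta}$ in every Dynkin type. Since a thick subcategory $\mathcal{S}$ of $\der(\mod(k\q{\Delta}))$ is by definition closed under the shift $S$ and its inverse, it is in particular invariant under $S^{2}$, hence under $\tau^{h_\Delta}$. This single observation drives both parts.

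For part (1), one direction is immediate: if $s$ divides $p$, then $\tau^{p}=(\tau^{s})^{p/s}$, so every $\langle\tau^{s}\rangle$-invariant subcategory is automatically $\langle\tau^{p}\rangle$-invariant. For the converse I would invoke B\'ezout's identity to write $s=ap+bh_\Delta$ with $a,b\in\mathbb{Z}$, so that
\[
\tau^{s}\;\cong\;(\tau^{p})^{a}\circ(\tau^{h_\Delta})^{b}.
\]
If $\mathcal{S}$ is $\langle\tau^{p}\rangle$-invariant, then both factors on the right preserve $\mathcal{S}$: the first by hypothesis, the second by the observation above. Hence $\tau^{s}(\mathcal{S})=\mathcal{S}$.

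For part (2), I would write $r=s+m h_\Delta$ with $m\in\mathbb{Z}$, so that
\[
\phi\tau^{r}\;\cong\;(\phi\tau^{s})\circ(\tau^{h_\Delta})^{m}.
\]
Because $(\tau^{h_\Delta})^{m}(\mathcal{S})=\mathcal{S}$ holds automatically, the auto-equivalences $\phi\tau^{r}$ and $\phi\tau^{s}$ have literally the same action on the objects of $\mathcal{S}$, so invariance under one is equivalent to invariance under the other.

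Neither step presents a real obstacle; the only point worth checking is that the factorisations above make sense at the level of functors, but this is clear since we are just composing integer powers of the commuting functors $\tau$ and $S$. The proof is essentially a one-line reduction once Proposition \ref{Bialkowski} is in hand, and its value lies in allowing us in subsequent sections to restrict the parameter $r$ in $\phi\tau^{r}$ to a representative modulo $h_\Delta$ and the parameter $p$ in $\tau^{p}$ to a divisor of $h_\Delta$.
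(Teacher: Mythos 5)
Your proof is correct and follows essentially the same route as the paper: both arguments combine the B\'ezout (respectively mod-$h_\Delta$) decomposition with the relation $S^{2}\cong\tau^{h_{\Delta}}$ from Proposition \ref{Bialkowski} and the fact that a thick subcategory is closed under the shift. The only difference is presentational — you factor the functors directly while the paper applies the same identity to an indecomposable object $X\in\mathcal{S}$ — so there is nothing substantive to add.
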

\begin{proof}
Let $\gcd(h_{\Delta},p)=s$. Then, there are $m,n \in \mathbb{Z}$ with $s=mh_{\Delta}+np$. Let $\mathcal{S}\neq 0$ be a thick subcategory of $\der(\mod(k\q{\Delta}))$ which is $\langle \tau^{p} \rangle$-invariant. Let $X\neq 0$ be indecomposable in $\mathcal{S}$.  Then, by Proposition \ref{Bialkowski}
$$  X = \tau^{0}(X) = \tau^{-mh_{\Delta}-np+s}(X) \cong \tau^{-np}(S^{-2m})\tau^{s}(X).$$
Then, $S^{-2m}\tau^{s}(X)\cong \tau^{np}(X) \in \mathcal{S}$ since $\mathcal{S}$ is $\langle\tau^{p}\rangle$-invariant. This implies $\tau^{s}(X)\in \mathcal{S}$ since $\mathcal{S}$ is thick. 

Conversely, a $\langle \tau^{s} \rangle$-invariant subcategory is clearly $\langle \tau^{p} \rangle$-invariant. 

The second part works similarly applying $\tau^{h_\Delta}=S^2$. 
\end{proof}

\begin{Rem}
In particular, there are no proper thick  subcategories in case $p$ and $h_{\Delta}$ are coprime. 
\end{Rem}

Finally, we obtain the following classification. 

\begin{Thm} \label{classif}
Let $\mathcal{T}$ be a finite triangulated category which is connected, algebraic and standard of type $(\Delta,r,t)$ excluding the cases $(D_n,r,2)$ for $n$ even and $(D_4,r,3)$. Put 
$$p=\begin{cases}
r & \text{if} \ \typ(\mathcal{T})=(\Delta,r,1),\\
\frac{h_{\Delta}}{2}+r & \text{if} \ \typ(\mathcal{T})=(A_{2n+1},r,2), (D_n,r,2) \ \text{with $n$ odd or} \ (E_6,r,2),\\
\frac{m_\Delta}{2}+r & \text{if} \ \typ(\mathcal{T})=(A_{2n},r,\infty),
\end{cases}$$
and $s=\gcd(h_\Delta,p)$. Then, there is a bijective correspondence between
\begin{itemize}
\item the thick subcategories of $\mathcal{T}$, and
\item the noncrossing partitions $w$ associated to $\vec{\Delta}$ satisfying $$w=\cox(\vec{\Delta})^{s}w\cox(\vec{\Delta})^{-s}.$$
\end{itemize}
\end{Thm}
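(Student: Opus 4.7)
The plan is to assemble several bijections in series and then translate the $\langle\Phi\rangle$-invariance condition into combinatorics on $\NC_{\q{\Delta}}$. First I would use Amiot's Theorem \ref{alg_triangulated} to identify $\mathcal{T}$ with an orbit category $\der(\mod(k\q{\Delta}))/\Phi$, where $\Phi$ is the auto-equivalence induced by $\phi\tau^r$ as in Theorem \ref{wagroups}. Theorem \ref{maintheorem} then turns the classification of thick subcategories of $\mathcal{T}$ into the classification of thick $\langle\Phi\rangle$-invariant subcategories of $\der(\mod(k\q{\Delta}))$. Composing Br\"uning's Theorem \ref{Bruening} with the Ingalls--Thomas bijection of Theorem \ref{Ingalls} yields the bijection $\tilde{\cox}$ between thick subcategories of $\der(\mod(k\q{\Delta}))$ and $\NC_{\q{\Delta}}$, so it remains to characterise the image of the $\langle\Phi\rangle$-invariant thick subcategories in $\NC_{\q{\Delta}}$.

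The core step will be to rewrite $\Phi$ as a product of powers of $\tau$ and of the shift $S$. In type $(\Delta,r,1)$ we have $\Phi=\tau^r$ directly. Outside of the two excluded types, Proposition \ref{Bialkowski} expresses $S$ as $\phi\tau^{q}$, with $q=h_\Delta/2$ in the $t=2$ cases and $q=m_\Delta/2$ in the $t=\infty$ case. Solving for $\phi$ and substituting into $\Phi=\phi\tau^r$ gives $\Phi\cong S\tau^{p}$ for the integer $p$ recorded in the statement (determined by $r$ and $q$ up to reduction modulo $h_\Delta$). Since every triangulated subcategory is automatically closed under $S$, invariance under $\langle\Phi\rangle$ is equivalent to invariance under $\langle\tau^p\rangle$. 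I would then apply Proposition \ref{simplif}(1), which, using the automatic $\tau^{h_\Delta}=S^2$-invariance, further collapses $\tau^p$-invariance to $\tau^s$-invariance with $s=\gcd(h_\Delta,p)$.

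To close the loop, I would invoke Proposition \ref{tau_action}, which shows that under $\tilde{\cox}$ the translation $\tau$ acts on $\NC_{\q{\Delta}}$ by conjugation with $\cox(\q{\Delta})$; consequently $\tau^s$ acts by $s$-fold conjugation with $\cox(\q{\Delta})$. Thus a thick subcategory $\mathcal{S}$ is $\tau^s$-invariant if and only if $w=\tilde{\cox}(\mathcal{S})$ satisfies $w=\cox(\q{\Delta})^{s}\,w\,\cox(\q{\Delta})^{-s}$. Stacking this with the bijections from the first paragraph yields exactly the claimed correspondence between thick subcategories of $\mathcal{T}$ and noncrossing partitions that are invariant under $s$-fold conjugation by the Coxeter element.

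The hard part is the second step, the rewriting of $\phi$ in terms of $\tau$ and $S$. Proposition \ref{Bialkowski} performs precisely this reduction, and its failure characterises exactly the two excluded types $(D_n,r,2)$ with $n$ even and $(D_4,r,3)$: there the automorphism $\phi$ is not expressible through $\tau$ and $S$, so $\langle\Phi\rangle$-invariance does not collapse to $\langle\tau^p\rangle$-invariance, and the present strategy breaks down. A secondary subtlety is bookkeeping the correct value of $p$ in each non-excluded type and verifying, via Proposition \ref{simplif}, that reducing modulo $h_\Delta$ and replacing $p$ by $\gcd(h_\Delta,p)$ is loss-free; both rely crucially on the identity $S^2\cong\tau^{h_\Delta}$ guaranteed by Proposition \ref{Bialkowski}.
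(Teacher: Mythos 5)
Your proposal follows exactly the chain the paper itself uses: Amiot's Theorem \ref{alg_triangulated}, then Theorem \ref{maintheorem} to pass to thick $\langle\Phi\rangle$-invariant subcategories of $\der(\mod(k\q{\Delta}))$, then Proposition \ref{Bialkowski} to trade $\phi$ for a power of $\tau$, then Proposition \ref{simplif}, Br\"uning, Ingalls--Thomas and Proposition \ref{tau_action} to arrive at $s$-fold conjugation invariance in $\NC_{\q{\Delta}}$; your extra remark that thick subcategories are automatically $S$-stable is precisely the implicit mechanism behind the paper's one-line appeal to Proposition \ref{Bialkowski}.

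The one place where your added detail does not hold up as written is the bookkeeping in the $t=\infty$ case. Solving $S\cong\phi\tau^{m_\Delta/2}$ for $\phi$ and substituting gives $\Phi\cong S\tau^{\,r-m_\Delta/2}$, and your parenthetical claim that this exponent agrees with the stated $p$ ``up to reduction modulo $h_\Delta$'' is only true in the $t=2$ cases, where $q=h_\Delta/2$ and $r-q\equiv r+q \pmod{h_\Delta}$. For $\typ(\mathcal{T})=(A_{2n},r,\infty)$ one has $q=m_\Delta/2$, so $r-q$ and $r+q$ differ by $m_\Delta=h_\Delta-1\not\equiv 0$, and in general $\gcd(h_\Delta,r-m_\Delta/2)\neq\gcd(h_\Delta,r+m_\Delta/2)$; so this step needs a genuine further argument rather than a congruence, for instance using $\phi^2=\tau$ to rewrite $\phi\cong S^{\pm1}\tau^{m_\Delta/2+1}$ (equivalently, passing to the inverse generator of $\langle\phi\tau^r\rangle$) and then checking carefully which exponent one actually lands on. The paper's own proof is silent on this point --- it simply asserts the equivalence of $\langle\phi\tau^r\rangle$- and $\langle\tau^p\rangle$-invariance from Proposition \ref{Bialkowski} --- so you are no worse off than the text, but the justification you supply for the $A_{2n}$ case is not correct as stated and should be replaced by an explicit computation in the group generated by $\phi$ (where $\tau=\phi^2$ and $S$ is an odd power of $\phi$), which is exactly where the sign/offset sensitivity lives.
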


\begin{proof}
Let $\mathcal{T}$ be as above. $\mathcal{T}$ is triangle equivalent to $\der(\mod(k\q{\Delta}))/\langle \phi \tau^r \rangle$ where $\phi$ and $\tau$ are induced by the respective weakly admissible groups of automorphisms of $\mathbb{Z}\Delta$. 

By Theorem \ref{maintheorem} the thick subcategories of $\mathcal{T}$ are in bijective correspondence to the thick $\langle \phi \tau^r \rangle$-invariant subcategories of $\der(\mod(k\q{\Delta}))$. 

Using Proposition \ref{Bialkowski}, a thick subcategory of $\der(\mod(k\q{\Delta}))$ is $\langle \phi \tau^r \rangle$-invariant if and only if it is $\langle \tau^p \rangle$-invariant. 

By Proposition \ref{simplif}, Theorem \ref{Bruening}, Theorem \ref{Ingalls} and Proposition \ref{tau_action} the thick $\langle \tau^p \rangle$-invariant subcategories of $\der(\mod(k\q{\Delta}))$ are in correspondence to the elements of $\NC_{\vec{\Delta}}$ which are invariant under $s$-fold conjugation by the Coxeter element. 
\end{proof}

\begin{Rem}
The cases $(D_n,r,2)$ for $n$ even and $(D_4,r,3)$ are excluded in this general theorem since in these cases it is not possible to express the automorphism $\phi$ of order $2$ or $3$ in terms of $\tau$ and $S$. We consider these cases separately in Proposition \ref{class_D_2} and Proposition \ref{class_D_4}. 
\end{Rem}

\section{The $A_{n}$-case} \label{A_n}
In the following, we describe the noncrossing partitions of type $\q{A}_n$ which are invariant under $s$-fold conjugation by the Coxeter element.
The following interpretation of $\NC_{\q{A}_n}$ \cite{Reiner} is helpful. In this section we fix a linearly oriented quiver $\vec{A}_n$. \\
\begin{xy}
  \xymatrix{
     &   &   & 1 \ar[r] & 2 \ar[r]&  \cdots \ar[r] &n-1 \ar[r]   & n &
  }
\end{xy}

\noindent
The classification of thick subcategories does not depend on this choice.  

Let $\Pi^{A}(n)$ be the poset of intersection subspaces of the hyperplanes of the root system of type $A_{n-1}$. Let $V$ be the subspace of $\mathbb{R}^{n}$ for which the coordinates sum to $0$. The root system of type $A_{n-1}$ is given by the integer vectors of $V$ of length $\sqrt{2}$. Choose as simple roots $\alpha_i=e_i-e_{i+1}$ for $1\leq i \leq n-1$. Then, the hyperplane arrangement is given by
$$\{x_{i}= x_{j} \mid 1 \leq i < j \leq n \}.   $$
Thus, we can consider the elements of $\Pi^{A}(n)$ as partitions $w$ of the set $[n]:=\{1,\ldots,n\}$ into disjoint blocks. 

Now, we place the numbers $1,2,\ldots,n$ clockwise around a circle in order ($n$ adjacent to $1$) and we draw a chord of the circle between $i$ and $j$ if they are in the same block of $w$ and no other elements strictly between them when going clockwise from $i$ to $j$ around the circle are also in this block. Then, $\NC^{A}(n)$ consists of the elements of $\Pi^{A}(n)$ in which all these chords may be drawn without crossing each other. This corresponds to the original noncrossing partition lattice introduced by Kreweras \cite{Kreweras}. 

In order to understand that $\NC^{A}(n+1)$ is the same as the poset $\NC_{\vec{A}_n}$ of noncrosssing partitions associated to $\vec{A}_n$ defined in section \ref{wide_and_ncp}, we have to study the Coxeter group of $\vec{A}_n$. Identify the simple reflection $s_i$ with the transposition $(i,i+1)$ for $1 \leq i \leq n$ in the symmetric group $\mathcal{S}_{n+1}$. This yields an isomorphism between the Coxeter group associated to $\vec{A}_n$ and $\mathcal{S}_{n+1}$.

\begin{Thm}[Brady \cite{Brady}]
There is a bijection $f\colon \NC_{\vec{A}_n} \rightarrow \NC^{A}(n+1)$. Write $w \in \NC_{\vec{A}_n}\subset \mathcal{S}_{n+1}$ as a product of disjoint cycles. Then, $f(w)$ is the disjoint union of blocks formed by these cycles.  \qed
\end{Thm}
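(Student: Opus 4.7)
The plan is to work with the standard identification of the Coxeter group $W_{\vec{A}_n}$ with the symmetric group $\mathcal{S}_{n+1}$, sending the simple reflection $s_i$ to the transposition $(i,i+1)$. Under this, every reflection is some transposition $(i,j)$, and the Coxeter element $\gamma=\cox(\vec{A}_n)=s_1\cdots s_n$ becomes the long cycle $(1,2,\ldots,n+1)$. The essential input is the classical formula $\ell_T(w)=(n+1)-\text{cyc}(w)$, expressing the absolute length of $w$ in terms of its number of cycles (fixed points counted). In particular $\ell_T(\gamma)=n$, and $w\le\gamma$ in the absolute order is equivalent to $\ell_T(w)+\ell_T(w^{-1}\gamma)=n$, i.e.\ $\text{cyc}(w)+\text{cyc}(w^{-1}\gamma)=n+2$.

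First I would prove that $f$ is well-defined: the cycle-support partition of any $w\le\gamma$ is noncrossing. Suppose to the contrary that the cycles of $w$ exhibit a crossing, i.e.\ there are $a<b<c<d$ in $[n+1]$ with $a,c$ in one cycle of $w$ and $b,d$ in another. A short direct calculation (equivalently, a chord-diagram picture) shows that multiplying $w^{-1}\gamma$ by the transposition $(a,c)$ forces an extra cycle merge, producing $\text{cyc}(w)+\text{cyc}(w^{-1}\gamma)<n+2$ and contradicting $w\le\gamma$. Alternatively, one can induct on $\ell_T(w)$ along a saturated chain from $\id$ up to $w$: each cover multiplies by a transposition, which either splits a cycle into two or merges two cycles of the current element, and the parallel effect on $w^{-1}\gamma$ forces noncrossingness to be preserved at each step.

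Next I would construct an explicit inverse. Given $\pi\in\NC^A(n+1)$, define $w_\pi$ by turning each block $\{b_1<b_2<\cdots<b_k\}$ into the cycle $(b_1\,b_2\,\cdots\,b_k)$; this is the unique cycle orientation on $\pi$ compatible with the cyclic order on the circle induced by $\gamma$. Injectivity of $f$ is then immediate, since two elements of $\NC_{\vec{A}_n}$ with the same cycle-support partition must use this canonical cycle orientation and hence coincide. To finish, I would verify $w_\pi\le\gamma$ by exhibiting a reduced factorization: peel off an innermost block of $\pi$ as a product of $|B|-1$ transpositions along its chords, so that the complementary factor $w_\pi^{-1}\gamma$ turns out to be $w_{\pi^{*}}$, where $\pi^{*}$ is the Kreweras complement of $\pi$. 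The identity $|\pi|+|\pi^{*}|=n+2$ then gives $\ell_T(w_\pi)+\ell_T(w_\pi^{-1}\gamma)=n$, so $w_\pi\le\gamma$.

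The principal obstacle is the well-definedness step, which is essentially Biane's characterization of the interval $[\id,\gamma]$ in the absolute order on $\mathcal{S}_{n+1}$. Once the cycle-count formula for $\ell_T$ and the description of covers in absolute order are in hand, the crossings-forbidden argument is a short combinatorial exercise; the remaining injectivity and surjectivity claims are bookkeeping via the explicit inverse $\pi\mapsto w_\pi$.
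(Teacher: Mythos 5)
The paper does not prove this statement at all: it is imported from Brady (and Biane), and the \qed only records that the proof is omitted, so there is no internal argument to compare yours against. Your sketch is the standard one for this result: identify $W_{\An}$ with $\mathcal{S}_{n+1}$, use $\ell_T(w)=(n+1)-\operatorname{cyc}(w)$, characterise the interval $[\id,\cox(\An)]$ by $\operatorname{cyc}(w)+\operatorname{cyc}(w^{-1}\cox(\An))=n+2$, and invert via the increasing orientation of each block together with the Kreweras complement identity $|\pi|+|\pi^{*}|=n+2$. These ingredients are all correct and do assemble into a proof of exactly the statement the paper cites.

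There is, however, a gap between your well-definedness step and your injectivity claim as written. The crossing argument (even granting the ``short direct calculation'') only shows that the set partition underlying the cycles of $w$ is noncrossing; it says nothing about how each cycle is oriented. Injectivity needs precisely the stronger fact that every $w\leq\cox(\An)$ restricts on each of its orbits to the increasing cycle $(b_1,b_2,\ldots,b_k)$: for instance $(1,3,2)$ and $(1,2,3)$ in $\mathcal{S}_4$ have the same cycle-support partition, but only the second lies below $\gamma=(1,2,3,4)$, since for $w=(1,3,2)$ one computes $\operatorname{cyc}(w)+\operatorname{cyc}(w^{-1}\gamma)=2+1=3<5$. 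You assert that an element with the given support ``must use this canonical cycle orientation'', but nothing you proved forces that. The cleanest repair stays inside your own framework: run the induction downward from $\gamma$ along a chain realising $w\leq\gamma$ rather than upward from $\id$. A cover $u\lessdot v$ with $v\leq\gamma$ means $u=vt$ where the transposition $t$ joins two points of a single cycle of $v$ and splits that cycle, and splitting a cyclically increasing cycle by any chord produces two cyclically increasing cycles on noncrossing blocks; hence every element of $[\id,\gamma]$ is automatically a noncrossing permutation with canonically oriented cycles. This yields well-definedness and injectivity simultaneously, and your Kreweras-complement computation then gives surjectivity and closes the argument.
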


So what does conjugation by the Coxeter element mean for the elements in $\NC^{A}(n)$?

\begin{Prop} \label{class_A}
Let $\mathcal{T}$ be a finite triangulated category which is connected, algebraic and standard of type $(A_n,r,t)$.
Again let $s=\gcd(h_{A_n},p)$ where $p$ depends on the type as discussed in Theorem \ref{classif}. Then, there is a bijective correspondence between
\begin{itemize}
\item the thick subcategories of $\mathcal{T}$, and
\item the elements of $\NC^{A}(n+1)$ invariant under a clockwise rotation by $s\frac{2\pi}{n+1}$.
\end{itemize}
\end{Prop}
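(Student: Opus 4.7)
The plan is to reduce the statement to Theorem \ref{classif} via Brady's bijection $f\colon\NC_{\vec{A}_n}\to\NC^A(n+1)$ and then show that conjugation by $\cox(\vec{A}_n)$ on the left-hand side corresponds, under $f$, to clockwise rotation by $2\pi/(n+1)$ on the right-hand side. Once this is done, the statement follows by taking the $s$-th iterate of both actions and restricting the bijection of Theorem \ref{classif} to the fixed points.

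First I would spell out the image of the Coxeter element in $\mathcal{S}_{n+1}$. Using the identification $s_i\leftrightarrow(i,i+1)$, the linearly oriented quiver gives
\[
\cox(\vec{A}_n)=s_1s_2\cdots s_n=(1,2)(2,3)\cdots(n,n+1)=(1,2,3,\ldots,n+1),
\]
i.e.\ the long cycle that sends $i$ to $i+1$ (mod $n+1$). Reading $1,2,\ldots,n+1$ clockwise around the circle used to draw noncrossing partitions, this $(n+1)$-cycle is precisely the clockwise rotation by $2\pi/(n+1)$ of the vertex set.

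Next I would analyse how conjugation interacts with Brady's bijection. For any $w\in\NC_{\vec{A}_n}\subset\mathcal{S}_{n+1}$ written as a product of disjoint cycles $w=(a_1^{(1)},\ldots,a_{k_1}^{(1)})\cdots(a_1^{(m)},\ldots,a_{k_m}^{(m)})$, the general formula for conjugation in the symmetric group gives
\[
\cox(\vec{A}_n)\,w\,\cox(\vec{A}_n)^{-1}=\bigl(\cox(a_1^{(1)}),\ldots,\cox(a_{k_1}^{(1)})\bigr)\cdots\bigl(\cox(a_1^{(m)}),\ldots,\cox(a_{k_m}^{(m)})\bigr),
\]
so the supports of the cycles of $w$ are transported to their images under the clockwise rotation by one position. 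Because $f$ sends a permutation $w$ to the partition whose blocks are exactly the supports of the disjoint cycles of $w$, this means $f(\cox(\vec{A}_n)\,w\,\cox(\vec{A}_n)^{-1})$ is obtained from $f(w)$ by clockwise rotation of the vertex labels by $2\pi/(n+1)$. Moreover, since $\cox(\vec{A}_n)$ preserves $\NC_{\vec{A}_n}$ by conjugation (it is itself the maximum of this interval) and clockwise rotation visibly sends noncrossing chord diagrams to noncrossing ones, both constructions are compatible with the respective noncrossing conditions.

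Iterating, $s$-fold conjugation by $\cox(\vec{A}_n)$ corresponds under $f$ to clockwise rotation by $s\cdot 2\pi/(n+1)$. Combining this with the bijection of Theorem \ref{classif} yields the desired correspondence between the thick subcategories of $\mathcal{T}$ and the elements of $\NC^A(n+1)$ fixed by this rotation. The only subtle point is that the compatibility between conjugation and rotation has to be checked with the right orientation conventions (clockwise versus counter-clockwise, and right-to-left versus left-to-right composition in $\mathcal{S}_{n+1}$); I expect this bookkeeping to be the main, though entirely elementary, obstacle.
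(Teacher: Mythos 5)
Your proposal is correct and follows essentially the same route as the paper: identify $\cox(\vec{A}_n)$ with the long cycle $(1,2,\ldots,n+1)$, conjugate cycle by cycle using the standard relabelling formula in $\mathcal{S}_{n+1}$, interpret this under Brady's bijection as clockwise rotation by $2\pi/(n+1)$, and then combine the $s$-fold iterate with Theorem \ref{classif}. The paper's proof is just a terser version of this same computation.
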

\begin{proof}
The Coxeter element is of the form
$$\cox(\vec{A}_{n})=s_{1}\ldots s_{n} = (1,2) \ldots (n,n+1)=(1,2,3,\ldots,n+1). $$
We can write an element of $\NC_{\vec{A}_{n}}\subset \mathcal{S}_{n+1}$ as a product of disjoint cycles. It is sufficient to observe the conjugation by the Coxeter element of one of these cycles,
\begin{align*}(1,2,\ldots,n+1) &(p_{1},p_{2},\ldots,p_{k}) (n+1,n,\ldots,2,1)\\
&= ([p_{1}+1], \ldots,[p_{k}+1]),
\end{align*}
where $[p_{j}+1]=(p_{j}+1) \mod (n+1)$ if $p_{j}+1\neq n+1$ and $[p_{j}+1]=n+1$ otherwise,
and we see that this is just the clockwise rotation. 
\end{proof}

\begin{Rem}
T. Araya \cite{Araya} also observes this phenomenon concerning rotation of `non-crossing spanning trees' which are in correspondence to exceptional sequences of $\der(\mod(k\An))$. 
\end{Rem}

This combinatorial description enables us to determine the number of thick subcategories. There is a quite obvious bijection between the elements of $\NC^{A}(n+1)$ invariant under rotation by $t\frac{2\pi}{n+1}$ with $t$ a non-trivial divisor of $n+1$ and the noncrossing $B_t$-partitions. Analogously to the $A_n$-partitions, these are defined as follows \cite{Reiner}.

$\Pi^B(n)$ is the poset of partitions $w$ of $[\pm n]:=\{1,2,\ldots,n,-1,-2,\ldots,-n\}$ into blocks with the property that for any block $B$ of $w$, its negative $-B$ is also a block of $w$, and there is at most one block (called the zero block, if present) containing both $i$ and $-i$ for some $i$. Place the numbers $$1,2,\ldots,n,-1,-2,\ldots,-n$$ clockwise around a circle in this order and draw as for a $A_n$-partitions chords of the circle according to the blocks of a partition. Then, $\NC^B(n)$ is the subset of $\Pi^B(n)$ for which all these chords may be drawn without crossing each other. 

\begin{Prop}[Reiner \cite{Reiner}] \label{B_correspondence}
Let $s,n+1 \geq 2$ be integers with $s$ a non-trivial divisor of $n+1$. Then, the subset of elements of $\NC^{A}(n+1)$ invariant under rotation by $s\frac{2\pi}{n+1}$ is isomorphic to $\NC^{B}(s)$. \qed
\end{Prop}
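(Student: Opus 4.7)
The plan is to construct an explicit bijection $\psi$ between the rotation-invariant elements of $\NC^{A}(n+1)$ and $\NC^{B}(s)$. Write $n+1 = sm$, so that rotation $r$ by $s$ positions has order $m$ and the $n+1$ circular positions decompose into $s$ orbits of size $m$ under $\langle r \rangle$, namely $\{j,\, j+s,\, \ldots,\, j+(m-1)s\}$ for $j = 1, \ldots, s$.

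First I would analyse the block structure of an invariant $P \in \NC^{A}(n+1)$. The group $\langle r \rangle$ permutes the blocks of $P$, and each block has a stabiliser $\langle r^k \rangle$ for some divisor $k$ of $m$. The key structural observation is that $P$ contains at most one block stabilised by the full group $\langle r \rangle$: two such blocks would each intersect every $\langle r \rangle$-orbit of points, and a short geometric argument shows they must cross. This distinguished block, when it exists, will play the role of the zero block of the image $B$-partition.

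Next I would define $\psi$ by collapsing the $m$-fold symmetry onto $2s$ points. Place $1, 2, \ldots, s, -1, \ldots, -s$ clockwise on a new circle, viewed as two consecutive copies of a fundamental domain for the $\langle r \rangle$-action on $[n+1]$. For each $\langle r \rangle$-orbit of non-invariant blocks of $P$, pick a representative and record its intersections with these two adjacent fundamental domains to obtain a $B$-pair $\{B, -B\}$; send any $\langle r \rangle$-invariant block to the zero block of $\psi(P)$. The inverse lifts a non-zero $B$-pair to the $\langle r \rangle$-orbit of a single block in $[n+1]$ and lifts the zero block to its unique $\langle r \rangle$-invariant extension.

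The main obstacle is verifying that $\psi$ is well-defined and preserves noncrossing in both directions. The forward direction rests on the fact that, by $\langle r \rangle$-invariance combined with noncrossing, every chord of $P$ is confined to at most two adjacent fundamental domains, so the quotient picture inherits the noncrossing property; the reverse direction uses the antipodal symmetry built into the $\NC^{B}$ definition, since any crossing in the lift would propagate under $\langle r \rangle$ to a crossing of $\psi(P)$. The delicate bookkeeping concerns blocks stabilised by a proper non-trivial subgroup $\langle r^k \rangle$ with $1 < k < m$, which have to be distributed consistently across several $B$-blocks, and this case analysis is the technical heart of the argument carried out by Reiner.
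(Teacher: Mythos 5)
The paper itself offers no proof of this proposition --- it is quoted from Reiner with a \qed, and the only related argument in the text, Lemma \ref{lemma_number}, is an enumeration via the mod-$s$ collapse $F$ and the Simion--Ullman involution, not an isomorphism with $\NC^{B}(s)$ --- so your sketch has to stand on its own, and it has gaps exactly where the work lies. First, your reason for the uniqueness of the $\langle r\rangle$-invariant block is false: an invariant block need not meet every $\langle r\rangle$-orbit of points (for $n+1=6$, $s=2$, the partition $\{1,3,5\}\cup\{2\}\cup\{4\}\cup\{6\}$ is invariant and its invariant block misses the orbit $\{2,4,6\}$). The correct argument is that an invariant block is a union of point-orbits, hence its convex hull contains the centre of the circle, and two blocks whose hulls contain the centre necessarily cross. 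Second --- and this is the crux --- the statement that every block is confined to two adjacent fundamental domains is asserted, not proved, yet it is precisely what makes $\psi$ well defined. What must be shown is that every non-invariant block $B$ has angular span strictly less than $2\pi/m$, where $n+1=sm$: the minimal arcs of $B,rB,\dots,r^{m-1}B$ all have the same length, cannot coincide or be nested, and cannot partially overlap (disjoint noncrossing blocks), so they are pairwise disjoint and $m\cdot|A_B|<2\pi$. Only with this estimate does $B$ straddle at most one domain boundary, and only then does its orbit have a representative inside $\{1,\dots,2s\}$ which is unique up to the relabelling $s+j\leftrightarrow -j$, so that the assigned pair $\{B,-B\}$ is independent of the choice of representative.

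Third, the case you single out as ``the technical heart'' --- blocks stabilised by a proper non-trivial subgroup $\langle r^{k}\rangle$ with $1<k<m$ --- is vacuous: a block fixed by any non-trivial rotation has convex hull containing the centre, so by the crossing argument its $\langle r\rangle$-orbit must be a single block, i.e.\ it is the unique invariant block already accounted for by the zero block. Your plan to ``distribute'' such blocks over several $B$-blocks would not give a well-defined map in any case, and explicitly deferring this to ``the case analysis carried out by Reiner'' leaves unproved exactly what the proposition requires: that $\psi$ is well defined, lands in $\NC^{B}(s)$, is inverse to the lifting map, and preserves the partial order (the claim is a poset isomorphism, not merely a bijection). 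With the span estimate supplied, your construction does work and is essentially the Reiner-style trace-on-a-double-fundamental-domain bijection; without it, the proposal is a plan rather than a proof.
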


\begin{Prop}[Reiner \cite{Reiner}]
The number of elements in $\NC^B(s)$ equals  $\binom{2s}{s}$. \qed
\end{Prop}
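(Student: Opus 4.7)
The claim is a combinatorial identity about centrally symmetric noncrossing partitions of $2s$ points on a circle. My plan is to stratify $\NC^{B}(s)$ in a way that naturally produces the Vandermonde decomposition $\binom{2s}{s}=\sum_{k=0}^{s}\binom{s}{k}^{2}$.

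First I would recall the structural dichotomy already built into the definition of $\NC^{B}(s)$: a type $B$ noncrossing partition $\pi$ of $[\pm s]$ either has every block strictly paired with a distinct antipodal block $-B\neq B$, or else has a unique zero block $Z=-Z$ together with antipodally paired non-zero blocks. In either case $\pi$ is completely determined by its antipodal pairs of non-zero blocks, with the zero block (if present) filling in the complement.

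Next I would stratify by the number $k$ of antipodal pairs of non-zero blocks and count each stratum. By central symmetry, the non-zero block structure of $\pi$ is encoded by its restriction to the "upper half" $\{1,\dots,s\}$ of the circle. A careful bookkeeping identifies the partitions in the stratum with the pairs $(I,J)$ of $k$-subsets of $[s]$ recording the "clockwise-minimal" and "clockwise-maximal" elements of the $k$ antipodal pairs seen on one half, yielding $\binom{s}{k}^{2}$ partitions in this stratum. Summing and applying the Chu--Vandermonde identity gives
\[ |\NC^{B}(s)| \;=\; \sum_{k=0}^{s}\binom{s}{k}^{2} \;=\; \binom{2s}{s}. \]

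The main obstacle is precisely the bookkeeping of the stratum count: one has to verify that every choice $(I,J)$ of two $k$-subsets of $[s]$ corresponds to a unique centrally symmetric noncrossing partition, and that the strata with $k=s$ (no zero block) and $k<s$ (zero block present) are handled in a single uniform parameterisation. If this direct combinatorial matching proves too delicate, a cleaner alternative is to construct a bijection $\NC^{B}(s)\to \binom{[2s]}{s}$ directly, reading off a canonical $s$-subset of $[2s]$ from the block structure of $\pi$, which then gives $\binom{2s}{s}$ in a single step; such a bijection is implicit in Reiner's rank generating function computation for $\NC^{B}$.
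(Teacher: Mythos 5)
You should first note that the paper itself does not prove this proposition: it is quoted from Reiner with no argument, and the count is verified independently only later, in Lemma \ref{lemma_number} combined with Proposition \ref{B_correspondence}, by a quite different mechanism (a surjection $F$ from the rotation-invariant elements of $\NC^{A}(h)$ onto $\NC^{A}(s)$ whose fibres are shown, using the Simion--Ullman involution $\alpha$, to contain exactly $s+1$ elements, giving $(s+1)C_{s}=\binom{2s}{s}$). Your route --- stratify $\NC^{B}(s)$ by the number $k$ of antipodal pairs of non-zero blocks, show each stratum has $\binom{s}{k}^{2}$ elements, and sum by Chu--Vandermonde --- is the classical type $B$ Narayana computation and is in principle a legitimate (indeed, essentially Reiner's original) way to obtain $\binom{2s}{s}$; it is genuinely different from the paper's fibre-counting argument.

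However, the decisive step of your proposal is missing, and the parameterisation you sketch does not work as stated. The stratum sizes $\binom{s}{k}^{2}$ are precisely what needs proof, and recording ``the clockwise-minimal and clockwise-maximal elements of the $k$ antipodal pairs seen on one half'' is not injective: already for $s=2$, $k=1$, the partitions $\{1,2\}\dotcup\{-1,-2\}$ and $\{1,-2\}\dotcup\{-1,2\}$ have antipodal pairs whose traces on the upper half $\{1,2\}$ have the same minimum $1$ and maximum $2$, so they would receive the same pair $(I,J)$. A correct encoding must distinguish the two blocks within an antipodal pair (and must also handle the zero block, which in type $B$ --- unlike type $D$ --- may consist of a single pair $\{i,-i\}$); you neither specify such an encoding nor verify that every pair $(I,J)$ arises exactly once, and your own text concedes this is the main obstacle, falling back on ``a bijection implicit in Reiner,'' which amounts to citing the result being proved. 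So the plan is salvageable, but as it stands the central counting lemma is unproved and its sketched justification fails, which is a genuine gap.
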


\begin{Prop} \label{nb_A}
Let $\mathcal{T}$ be a finite triangulated category which is connected, algebraic and standard of type $(A_n,r,t)$.
Let $s=\gcd(h_{A_n},p)$, where $p=r$, $p=r+\frac{h_{A_n}}{2}$ or $p=r+\frac{m_{A_n}}{2}$ depending on the torsion order (see Theorem \ref{classif}). There is $x\in \mathbb{N}$ with $h_{A_{n}}=xs$.

The number of thick subcategories of $\mathcal{T}$ equals $\binom{2s}{s}$ if $x>1$ and the $s$-th Catalan number $C_{s}$ otherwise.
\end{Prop}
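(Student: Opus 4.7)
The plan is to combine Proposition \ref{class_A} with the enumeration of rotation-invariant noncrossing partitions supplied by the two results of Reiner quoted immediately above, so the statement reduces to a short case analysis in $x$.

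First I would apply Proposition \ref{class_A} to identify the thick subcategories of $\mathcal{T}$ with the set of $w \in \NC^A(n+1)$ fixed by clockwise rotation by $s\cdot\tfrac{2\pi}{n+1}$. Since $s=\gcd(h_{A_n},p)$ and $h_{A_n}=n+1$, we have $s \mid (n+1)$, so $n+1=xs$ for some $x\in\mathbb{N}$. The rotation of order dividing $n+1$ has exactly $x$ as the order of its action on the $(n+1)$-gon.

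Next I would split on the value of $x$. In the degenerate case $x=1$ one has $s=n+1$, so the rotation is by $2\pi$, i.e.\ the identity of the circle; every element of $\NC^A(n+1)$ is then invariant. Using the standard identification $|\NC^A(m)|=C_m$ due to Kreweras, the count is $C_{n+1}=C_s$, giving the Catalan formula. In the remaining case $x>1$, the integer $s$ is a proper divisor of $n+1$. Assuming $s\ge 2$, Proposition \ref{B_correspondence} produces a bijection between the rotation-invariant elements of $\NC^A(n+1)$ and the noncrossing $B_s$-partitions $\NC^B(s)$, which by the subsequent proposition has cardinality $\binom{2s}{s}$.

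The only point requiring extra attention is the boundary value $s=1$ (hence $x=n+1$), which falls outside the hypotheses of Proposition \ref{B_correspondence}. Here one argues directly: a noncrossing partition of $[n+1]$ fixed by a primitive rotation (of order $n+1$) must have all blocks of equal size and mapped cyclically among themselves, and the noncrossing condition on the disk forces either all blocks to be singletons or all vertices to lie in one common block. This gives exactly $2=\binom{2}{1}$ invariant partitions, matching the general formula, so the two cases $x>1$ and $x=1$ together exhaust the statement. No genuine obstacle arises; the proof is pure bookkeeping once Proposition \ref{class_A} and Reiner's enumeration are invoked, and the sole subtlety is the short direct check for $s=1$.
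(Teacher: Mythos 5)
Your argument is correct and follows essentially the same route as the paper: Proposition \ref{class_A} reduces the count to the rotation-invariant elements of $\NC^{A}(n+1)$, the case $x>1$ is handled via Reiner's bijection with $\NC^{B}(s)$ and its cardinality $\binom{2s}{s}$, and the case $x=1$ gives all of $\NC^{A}(n+1)$, counted by Kreweras as $C_{s}$. Your explicit check of the boundary value $s=1$ (which the hypotheses of Proposition \ref{B_correspondence} formally exclude) is a correct refinement that the paper's own proof passes over silently --- that case is also covered by the independent count in Lemma \ref{lemma_number} --- but it does not change the approach.
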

\begin{proof}
Let $x>1$. 
The number of thick subcategories of $\mathcal{T}$ equals the number of elements of $\NC^{A}(n+1)$ invariant under rotation by $s\frac{2\pi}{n+1}$ and this equals $\binom{2s}{s}$.

Let $x=1$, i.e.\ $h_{\Delta}=s$. Hence, the number of thick subcategories equals the number of elements of $\NC^{A}(n+1)$ invariant under rotation by $2\pi$. All elements of $\NC^{A}(n+1)$ are invariant under rotation by $2\pi$ and the number of elements of $\NC^{A}(n+1)$ equals $C_s=\frac{1}{s+1}\binom{2s}{s}$ as Kreweras \cite{Kreweras} shows. 
\end{proof}

We prove the formula on the number of elements independently, since this proof contains an algorithm for the construction of the elements of $\NC^{A}(n+1)$ invariant under rotation by $s\frac{2\pi}{n+1}$. 

\begin{Lemma} \label{lemma_number}
Let $h=xs$ with $x,s \in \mathbb{N}$ and $x>1$. Then, there is a surjective map 
\begin{align*}
 F\colon \{ \text{elements of} \ \NC^{A}(h) \ \text{invariant under rotation by} \ (s\frac{2\pi}{h})\} 
 \rightarrow  \NC^{A}(s).
\end{align*}
Moreover, for each $w \in \NC^{A}(s)$ there are exactly $s+1$ elements in the preimage of $w$ under $F$. Hence, the number of elements of $\NC^{A}(h)$ invariant under rotation by $s\frac{2\pi}{h}$ equals $(s+1)C_{s}=\binom{2s}{s}$.
\end{Lemma}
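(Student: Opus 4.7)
My plan is to realise $F$ as the quotient by the $x$-fold rotation and then to enumerate the fibres by a combinatorial analysis of local lift shapes. Identify $[h]$ with $\mathbb{Z}/h\mathbb{Z}$ and $[s]$ with $\mathbb{Z}/s\mathbb{Z}$, and let $\pi\colon[h]\to[s]$ be reduction modulo $s$. The rotation by $s\cdot 2\pi/h=2\pi/x$ acts on $[h]$ by $i\mapsto i+s$, and its orbits coincide with the fibres of $\pi$. For an element $w$ of the domain I set $F(w)$ to be the partition of $[s]$ whose blocks are the $\pi$-images of the rotation-orbits of blocks of $w$; this is well defined because distinct rotation-orbits have disjoint $\pi$-images (if blocks $B,B'$ of $w$ have projections sharing an element, any two lifts with equal projection differ by a multiple of $s$, forcing $B'$ to be a rotate of $B$).

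Next, I would check that $F(w)\in\NC^{A}(s)$ by contradiction: a crossing $a<b<c<d$ in $F(w)$ lifts to a crossing of $w$ after choosing representatives $\tilde a,\tilde c$ in a block $B$ of $w$ whose orbit projects to the first block of the crossing and $\tilde b,\tilde d$ in a block $B'$ whose orbit projects to the second. Replacing $B'$ by an appropriate rotate $\mathrm{rot}^{k}(B')$, which remains in its orbit and hence still lifts the same block of $F(w)$, shifts $\tilde b$ and $\tilde d$ simultaneously by $ks$, and a pigeonhole argument produces a $k$ for which $\tilde a,\tilde b,\tilde c,\tilde d$ occur in this cyclic order on $[h]$. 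Surjectivity of $F$ is immediate from the sectorwise lift $\tilde v$ whose blocks are $V_{i}+js$ for blocks $V_{i}$ of $v$ and $0\le j\le x-1$: its chord diagram consists of $x$ rotated copies of the chord diagram of $v$ sitting in disjoint circular sectors, so $\tilde v$ is $x$-fold symmetric and noncrossing, and $F(\tilde v)=v$.

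The fibre count is the main obstacle. Given $v\in\NC^{A}(s)$ with blocks $V_{1},\dots,V_{k}$, any preimage $w$ of $v$ is determined by the rotation-invariant noncrossing partition induced on each preimage $\pi^{-1}(V_{i})\subset[h]$, whose blocks must all project onto $V_{i}$. For each such orbit there are essentially three local shapes: a \emph{trivial} shape consisting of $x$ disjoint rotated copies of $V_{i}$, an \emph{interlaced} shape joining rotated copies across sector boundaries (available when $|V_{i}|\ge 2$), and a \emph{merged} shape consisting of the single rotation-fixed block $\pi^{-1}(V_{i})$. My plan is to exhibit $s+1$ explicit preimages --- the sectorwise lift together with $s$ non-trivial lifts indexed by the $s$ ``gap'' positions on the $s$-circle, each involving one coordinated local modification (a single interlacing, a single merging, or a forced pair, as dictated by the cyclic structure of $v$) --- and then to show that every other combination of local shapes produces a crossing by a direct case analysis on the relative cyclic positions of the blocks of $v$. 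Reiner's isomorphism (Proposition~\ref{B_correspondence}) between the invariant subset and $\NC^{B}(s)$ provides a consistency check, as $\binom{2s}{s}=(s+1)C_{s}$ matches the claimed fibre size; transporting the natural $(s+1)$-to-$1$ projection $\NC^{B}(s)\to\NC^{A}(s)$ along Reiner's bijection offers an alternative route to the count.
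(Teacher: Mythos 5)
Your definition of $F$ (reduction modulo $s$ of the blocks), the well-definedness and noncrossing checks, and the surjectivity via the sectorwise lift all agree with the paper's map and are unproblematic. The genuine gap is the fibre count, which you yourself identify as the main obstacle and then leave as a plan rather than a proof: you announce that the fibre consists of the sectorwise lift plus ``$s$ non-trivial lifts indexed by the $s$ gap positions, each involving one coordinated local modification'', and that ``every other combination of local shapes produces a crossing by a direct case analysis'' --- but neither the construction of these $s$ lifts nor, crucially, the exhaustiveness argument is carried out, and the proposed indexing by gaps is not substantiated (it is not the parametrization that actually works). In the paper the fibre over $w$ (with $r$ blocks) is enumerated via the Simion--Ullman involution $\alpha$: each block of $w$ lifts either to $x$ disjoint copies or to a single rotation-fixed ``big'' block; for each of the $r$ blocks of $w$ there is exactly one preimage containing the corresponding big block, and for each of the $s-r+1$ blocks of $\alpha(w)$ there is exactly one preimage $u$ such that $\alpha(u)$ contains a big block (using $F\circ\alpha=\alpha\circ F$, and noting such $u$ has no big block itself), giving $r+(s-r+1)=s+1$ distinct elements. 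Completeness is then forced by a block-count identity: a preimage with no big block has $xr$ blocks, so its complement has $h-xr+1$ blocks, while a complement with no big block would have $x(s+1-r)$ blocks, and $h-xr+1=x(s+1-r)$ forces $x=1$. Some argument of this kind --- or an honest completion of your case analysis, which would have to control the global interaction between the local choices made over different blocks of $w$, not just classify each orbit $\pi^{-1}(V_i)$ separately --- is exactly what is missing from your proposal.

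Your fallback through Reiner's Proposition \ref{B_correspondence} does not close the gap either. To deduce the fibre statement from it you would need to prove that the forgetful map $\NC^{B}(s)\rightarrow\NC^{A}(s)$ is $(s+1)$-to-one on every fibre and that, under Reiner's isomorphism, it corresponds to your map $F$; neither claim is established in your write-up, and each is of roughly the same difficulty as the count you are trying to avoid. Moreover, it would defeat the purpose of Lemma \ref{lemma_number}, which is stated precisely to give a proof independent of Proposition \ref{B_correspondence} (the count itself already follows from Reiner's results in Proposition \ref{nb_A}) and whose value lies in the explicit algorithm it provides for constructing all invariant partitions in the fibre.
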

\begin{proof}
The map $F$ is given as follows. Let $w=w_{1} \dotcup \ldots \dotcup w_{r}$ be an invariant noncrossing partition of $\{1,\ldots, h\}$. Then, $F(w)=F(w_{1}) \cup \ldots \cup F(w_{r})$ with $F(w_{i})=F(\{{p_{1},\ldots,p_{m}}\})=\{[p_{1}],\ldots,[p_{m}]\}$ where $[p_{j}]=p_{j} \mod s$ if $s \nmid p_{j}$ and $[p_{j}]=s$ otherwise.

To a partition in $\NC^{A}(s)$ we construct all partitions in its preimage. 
This construction requires an involution $\alpha$ on $\NC^{A}(n)$ which is due to Simion and Ullman \cite{Simion}. The map $\alpha\colon \NC^{A}(n) \rightarrow \NC^{A}(n)$ is defined as follows. Let $w \in \NC^{A}(n)$. Draw primed numbers $1',\ldots,n'$ clockwise around a circle so that the primed numbers interlace the unprimed numbers $1,\ldots,n$. Then, $\alpha(w)$ is the unique maximal partition of $\{1',\ldots,n'\}$ such that $w \dotcup \alpha(w)$ is a noncrossing partition of $\{1,1',2,2',\ldots,n,n'\}$. Forgetting the primes, we obtain an element $\alpha(w)\in \NC^{A}(n)$.  

Now let $w \in \NC^{A}(s)$. We list all $s+1$ partitions in the preimage of $w$. In particular, this provides a method to construct all $(s\frac{2\pi}{h})$-invariant noncrossing partitions of $\{1,\ldots,h\}$ from the noncrossing partitions of $\{1,\ldots,s\}$. 

Let $w=w_{1} \dotcup \ldots \dotcup w_{r}$ be a noncrossing partition of $\{1,\ldots,s\}$. A block $w_{i}$ of $w$ corresponds in the preimage
\begin{itemize}
\item either to a big block of cardinality $x|w_{i}|$ 
\item or to $x$ disjoint blocks of cardinality $|w_{i}|$.
\end{itemize}
Otherwise the partition in the preimage would be crossing or not invariant. 

To each block $w_{i}$ of $w$ we define a partition in the preimage which contains a big block of cardinality $x|w_{i}|$. All other blocks of this partition are uniquely determined by this. Otherwise the partition would be crossing. This construction yields $r$ partitions in the preimage.

Next we define to each block of $\alpha(w)$ a partition in the preimage of $w$. Let $v$ be a partition in the preimage of $\alpha(w)$ under $F$ containing a big block. There is a partition $u \in \NC^{A}(h)$ with $v=\alpha(u)$. Thus, 
$$F(u)=F(\alpha(v))=\alpha(F(v))=\alpha(\alpha(w))=w,$$ 
and $u$ does not contain a big block since otherwise $\alpha(u) \dotcup u$ would be crossing. Therefore, $u$ is different from the partitions defined in the first step. Moreover, the constructed partitions to different blocks of $\alpha(w)$ are different since the assignment $\alpha$ is bijective. Simion/Ullman show that $\alpha(w)$ has $s-r+1$ blocks and hence, we obtain $s-r+1$ further partitions in the preimage.

Altogether, we have defined $r+s-r+1=s+1$ partitions in the preimage.

It remains to show that there are no further. Let $v \in \NC^{A}(h)$ be a partition in $F^{-1}(w)$. If $v$ contains a big block, we have counted $v$ in the first step. If not, $v$ contains exactly $xr$ blocks, hence $\alpha(v)$ contains $h-(xr)+1$ blocks. Consider $\alpha(v)$. If $\alpha(v)$ contains a big block, we have counted $v$ in the second step. If not, $\alpha(v)$ contains $x(s+1-r)$ blocks.  Thus,
$h-xr+1=x(s+1-r)$ and this implies $x=1$, a contradiction.
\end{proof}

\begin{Ex}
Let $s=2$ and $h=3\cdot2=6$. The noncrossing partitions of $\{1,2\}$ are the following black coloured partitions.\\ 
\scalebox{1} 
{
\begin{pspicture}(0,-0.83171874)(3.2153125,0.83171874)
\definecolor{color15}{rgb}{0.6,0.6,0.6}
\psellipse[linewidth=0.04,dimen=outer](0.693125,-0.01171875)(0.4,0.4)
\psdots[dotsize=0.12](0.693125,0.38828126)
\psdots[dotsize=0.12](0.693125,-0.41171876)
\usefont{T1}{ptm}{m}{n}
\rput(0.61375,0.61828125){1}
\usefont{T1}{ptm}{m}{n}
\rput(0.6689063,-0.68171877){2}
\psellipse[linewidth=0.04,dimen=outer](2.493125,0.00828125)(0.4,0.4)
\psdots[dotsize=0.12](2.493125,0.40828124)
\psdots[dotsize=0.12](2.493125,-0.39171875)
\usefont{T1}{ptm}{m}{n}
\rput(2.4137502,0.65828127){1}
\usefont{T1}{ptm}{m}{n}
\rput(2.4689062,-0.6617187){2}
\psline[linewidth=0.04cm](2.493125,0.38828126)(2.493125,-0.41171876)
\psdots[dotsize=0.12,linecolor=color15](1.093125,-0.02828126)
\psdots[dotsize=0.12,linecolor=color15](0.293125,-0.02828126)
\psdots[dotsize=0.12,linecolor=color15](2.893125,-0.02828126)
\psdots[dotsize=0.12,linecolor=color15](2.093125,-0.02828126)
\usefont{T1}{ptm}{m}{n}
\rput(1.2896875,-0.04328126){\scriptsize \color{color15}1'}
\usefont{T1}{ptm}{m}{n}
\rput(0.0803125,-0.04328126){\scriptsize \color{color15}2'}
\usefont{T1}{ptm}{m}{n}
\rput(1.9003125,-0.04328126){\scriptsize \color{color15}2'}
\usefont{T1}{ptm}{m}{n}
\rput(3.0696874,-0.04328126){\scriptsize \color{color15}1'}
\psline[linewidth=0.04cm,linecolor=color15](0.281875,-0.02828123)(1.101875,-0.02828123)
\end{pspicture} 
}
 
The elements of $\NC^{A}(h)$ invariant under rotation by $s\frac{2 \pi}{h}$ are given by the following partitions.
First we construct the preimage of the first partition.  \\
\scalebox{1} 
{
\begin{pspicture}(0,-1.2117187)(7.4196873,1.2117187)
\definecolor{color204}{rgb}{0.6,0.6,0.6}
\psellipse[linewidth=0.04,dimen=outer](1.1084375,-0.03171875)(0.8,0.8)
\psdots[dotsize=0.12](1.1084375,0.7682813)
\psdots[dotsize=0.12](1.1084375,-0.83171874)
\psdots[dotsize=0.12](1.7284375,0.42828125)
\psdots[dotsize=0.12](1.7684375,-0.45171875)
\psdots[dotsize=0.12](0.4684375,-0.47171873)
\psdots[dotsize=0.12](0.4484375,0.40828127)
\usefont{T1}{ptm}{m}{n}
\rput(1.0121875,1.0182812){1}
\usefont{T1}{ptm}{m}{n}
\rput(1.8756249,0.5782813){2}
\usefont{T1}{ptm}{m}{n}
\rput(1.89375,-0.6217188){3}
\usefont{T1}{ptm}{m}{n}
\rput(1.0803125,-1.0617187){4}
\usefont{T1}{ptm}{m}{n}
\rput(0.2575,-0.6617187){5}
\usefont{T1}{ptm}{m}{n}
\rput(0.2290625,0.53828126){6}
\usefont{T1}{ptm}{m}{n}
\rput(1.5315624,0.8767187){\tiny \color{color204}1'}
\psdots[dotsize=0.12,linecolor=color204](1.449375,0.6517187)
\psdots[dotsize=0.12,linecolor=color204](1.869375,-0.0082813)
\psdots[dotsize=0.12,linecolor=color204](1.489375,-0.6882813)
\psdots[dotsize=0.12,linecolor=color204](0.709375,-0.7082813)
\psdots[dotsize=0.12,linecolor=color204](0.309375,-0.0082813)
\psdots[dotsize=0.12,linecolor=color204](0.729375,0.6517187)
\usefont{T1}{ptm}{m}{n}
\rput(2.0621874,-0.0232813){\tiny \color{color204}2'}
\usefont{T1}{ptm}{m}{n}
\rput(1.58,-0.9632813){\tiny \color{color204}3'}
\usefont{T1}{ptm}{m}{n}
\rput(0.5828125,-0.9632813){\tiny \color{color204}4'}
\usefont{T1}{ptm}{m}{n}
\rput(0.05921875,-0.0432813){\tiny \color{color204}5'}
\usefont{T1}{ptm}{m}{n}
\rput(0.57984376,0.8567187){\tiny \color{color204}6'}
\psellipse[linewidth=0.04,dimen=outer](3.7284374,-0.01171875)(0.8,0.8)
\psdots[dotsize=0.12](3.7284374,0.7882813)
\psdots[dotsize=0.12](3.7284374,-0.81171876)
\psdots[dotsize=0.12](4.3484373,0.44828123)
\psdots[dotsize=0.12](4.3884373,-0.43171874)
\psdots[dotsize=0.12](3.0884376,-0.45171875)
\psdots[dotsize=0.12](3.0684376,0.42828125)
\usefont{T1}{ptm}{m}{n}
\rput(3.6321876,1.0382812){1}
\usefont{T1}{ptm}{m}{n}
\rput(4.495625,0.5982813){2}
\usefont{T1}{ptm}{m}{n}
\rput(4.51375,-0.6017188){3}
\usefont{T1}{ptm}{m}{n}
\rput(3.7003126,-1.0417187){4}
\usefont{T1}{ptm}{m}{n}
\rput(2.8775,-0.64171875){5}
\usefont{T1}{ptm}{m}{n}
\rput(2.8490624,0.55828124){6}
\usefont{T1}{ptm}{m}{n}
\rput(4.1515627,0.8967187){\tiny \color{color204}1'}
\psdots[dotsize=0.12,linecolor=color204](4.069375,0.6717187)
\psdots[dotsize=0.12,linecolor=color204](4.489375,0.0117187)
\psdots[dotsize=0.12,linecolor=color204](4.109375,-0.6682813)
\psdots[dotsize=0.12,linecolor=color204](3.329375,-0.6882813)
\psdots[dotsize=0.12,linecolor=color204](2.929375,0.0117187)
\psdots[dotsize=0.12,linecolor=color204](3.349375,0.6717187)
\usefont{T1}{ptm}{m}{n}
\rput(4.6821876,-0.0032813){\tiny \color{color204}2'}
\usefont{T1}{ptm}{m}{n}
\rput(4.2,-0.9432813){\tiny \color{color204}3'}
\usefont{T1}{ptm}{m}{n}
\rput(3.2028124,-0.9432813){\tiny \color{color204}4'}
\usefont{T1}{ptm}{m}{n}
\rput(2.6792188,-0.0232813){\tiny \color{color204}5'}
\usefont{T1}{ptm}{m}{n}
\rput(3.1998436,0.8767187){\tiny \color{color204}6'}
\psellipse[linewidth=0.04,dimen=outer](6.3484373,-0.01171875)(0.8,0.8)
\psdots[dotsize=0.12](6.3484373,0.7882813)
\psdots[dotsize=0.12](6.3484373,-0.81171876)
\psdots[dotsize=0.12](6.9684377,0.44828123)
\psdots[dotsize=0.12](7.0084376,-0.43171874)
\psdots[dotsize=0.12](5.7084374,-0.45171875)
\psdots[dotsize=0.12](5.6884375,0.42828125)
\usefont{T1}{ptm}{m}{n}
\rput(6.2521877,1.0382812){1}
\usefont{T1}{ptm}{m}{n}
\rput(7.115625,0.5982813){2}
\usefont{T1}{ptm}{m}{n}
\rput(7.13375,-0.6017188){3}
\usefont{T1}{ptm}{m}{n}
\rput(6.3203125,-1.0417187){4}
\usefont{T1}{ptm}{m}{n}
\rput(5.4975,-0.64171875){5}
\usefont{T1}{ptm}{m}{n}
\rput(5.4690623,0.55828124){6}
\usefont{T1}{ptm}{m}{n}
\rput(6.7715626,0.8967187){\tiny \color{color204}1'}
\psdots[dotsize=0.12,linecolor=color204](6.689375,0.6717187)
\psdots[dotsize=0.12,linecolor=color204](7.109375,0.0117187)
\psdots[dotsize=0.12,linecolor=color204](6.729375,-0.6682813)
\psdots[dotsize=0.12,linecolor=color204](5.949375,-0.6882813)
\psdots[dotsize=0.12,linecolor=color204](5.549375,0.0117187)
\psdots[dotsize=0.12,linecolor=color204](5.969375,0.6717187)
\usefont{T1}{ptm}{m}{n}
\rput(7.3021874,-0.0032813){\tiny \color{color204}2'}
\usefont{T1}{ptm}{m}{n}
\rput(6.82,-0.9432813){\tiny \color{color204}3'}
\usefont{T1}{ptm}{m}{n}
\rput(5.8228126,-0.9432813){\tiny \color{color204}4'}
\usefont{T1}{ptm}{m}{n}
\rput(5.2992187,-0.0232813){\tiny \color{color204}5'}
\usefont{T1}{ptm}{m}{n}
\rput(5.819844,0.8767187){\tiny \color{color204}6'}
\psline[linewidth=0.04,linecolor=color204](1.849375,0.0117187)(1.849375,0.0117187)(1.429375,0.6517187)(0.749375,0.6317187)(0.349375,-0.0082813)(0.729375,-0.7082813)(1.449375,-0.6882813)(1.849375,-0.0282813)(1.869375,0.0117187)
\psline[linewidth=0.04cm,linecolor=color204](4.049375,0.6717187)(4.449375,0.0117187)
\psline[linewidth=0.04cm,linecolor=color204](3.329375,-0.6682813)(4.089375,-0.6682813)
\psline[linewidth=0.04cm,linecolor=color204](2.949375,0.0317187)(3.369375,0.6517187)
\psline[linewidth=0.04cm,linecolor=color204](5.969375,0.6517187)(6.649375,0.6517187)
\psline[linewidth=0.04cm,linecolor=color204](7.069375,0.0117187)(6.709375,-0.6682813)
\psline[linewidth=0.04cm,linecolor=color204](5.949375,-0.6882813)(5.569375,0.0117187)
\psline[linewidth=0.04cm](3.709375,0.7917187)(4.389375,-0.4282813)
\psline[linewidth=0.04cm](4.389375,-0.4282813)(3.069375,-0.4282813)
\psline[linewidth=0.04cm](3.089375,-0.4082813)(3.729375,0.7917187)
\psline[linewidth=0.04cm](6.969375,0.4517187)(6.349375,-0.8082813)
\psline[linewidth=0.04cm](6.349375,-0.8082813)(5.709375,0.4317187)
\psline[linewidth=0.04cm](5.709375,0.4317187)(6.949375,0.4317187)
\end{pspicture} 
}\\
And this is the preimage of the second partition. \\
\scalebox{1} 
{
\begin{pspicture}(0,-1.2117187)(7.4196873,1.2117187)
\definecolor{color204}{rgb}{0.6,0.6,0.6}
\psellipse[linewidth=0.04,dimen=outer](1.1084375,-0.03171875)(0.8,0.8)
\psdots[dotsize=0.12](1.1084375,0.7682813)
\psdots[dotsize=0.12](1.1084375,-0.83171874)
\psdots[dotsize=0.12](1.7284375,0.42828125)
\psdots[dotsize=0.12](1.7684375,-0.45171875)
\psdots[dotsize=0.12](0.4684375,-0.47171873)
\psdots[dotsize=0.12](0.4484375,0.40828127)
\usefont{T1}{ptm}{m}{n}
\rput(1.0121875,1.0182812){1}
\usefont{T1}{ptm}{m}{n}
\rput(1.8756249,0.5782813){2}
\usefont{T1}{ptm}{m}{n}
\rput(1.89375,-0.6217188){3}
\usefont{T1}{ptm}{m}{n}
\rput(1.0803125,-1.0617187){4}
\usefont{T1}{ptm}{m}{n}
\rput(0.2575,-0.6617187){5}
\usefont{T1}{ptm}{m}{n}
\rput(0.2290625,0.53828126){6}
\usefont{T1}{ptm}{m}{n}
\rput(1.5315624,0.8767187){\tiny \color{color204}1'}
\psdots[dotsize=0.12,linecolor=color204](1.449375,0.6517187)
\psdots[dotsize=0.12,linecolor=color204](1.869375,-0.0082813)
\psdots[dotsize=0.12,linecolor=color204](1.489375,-0.6882813)
\psdots[dotsize=0.12,linecolor=color204](0.709375,-0.7082813)
\psdots[dotsize=0.12,linecolor=color204](0.309375,-0.0082813)
\psdots[dotsize=0.12,linecolor=color204](0.729375,0.6517187)
\usefont{T1}{ptm}{m}{n}
\rput(2.0621874,-0.0232813){\tiny \color{color204}2'}
\usefont{T1}{ptm}{m}{n}
\rput(1.58,-0.9632813){\tiny \color{color204}3'}
\usefont{T1}{ptm}{m}{n}
\rput(0.5828125,-0.9632813){\tiny \color{color204}4'}
\usefont{T1}{ptm}{m}{n}
\rput(0.05921875,-0.0432813){\tiny \color{color204}5'}
\usefont{T1}{ptm}{m}{n}
\rput(0.57984376,0.8567187){\tiny \color{color204}6'}
\psellipse[linewidth=0.04,dimen=outer](3.7284374,-0.01171875)(0.8,0.8)
\psdots[dotsize=0.12](3.7284374,0.7882813)
\psdots[dotsize=0.12](3.7284374,-0.81171876)
\psdots[dotsize=0.12](4.3484373,0.44828123)
\psdots[dotsize=0.12](4.3884373,-0.43171874)
\psdots[dotsize=0.12](3.0884376,-0.45171875)
\psdots[dotsize=0.12](3.0684376,0.42828125)
\usefont{T1}{ptm}{m}{n}
\rput(3.6321876,1.0382812){1}
\usefont{T1}{ptm}{m}{n}
\rput(4.495625,0.5982813){2}
\usefont{T1}{ptm}{m}{n}
\rput(4.51375,-0.6017188){3}
\usefont{T1}{ptm}{m}{n}
\rput(3.7003126,-1.0417187){4}
\usefont{T1}{ptm}{m}{n}
\rput(2.8775,-0.64171875){5}
\usefont{T1}{ptm}{m}{n}
\rput(2.8490624,0.55828124){6}
\usefont{T1}{ptm}{m}{n}
\rput(4.1515627,0.8967187){\tiny \color{color204}1'}
\psdots[dotsize=0.12,linecolor=color204](4.069375,0.6717187)
\psdots[dotsize=0.12,linecolor=color204](4.489375,0.0117187)
\psdots[dotsize=0.12,linecolor=color204](4.109375,-0.6682813)
\psdots[dotsize=0.12,linecolor=color204](3.329375,-0.6882813)
\psdots[dotsize=0.12,linecolor=color204](2.929375,0.0117187)
\psdots[dotsize=0.12,linecolor=color204](3.349375,0.6717187)
\usefont{T1}{ptm}{m}{n}
\rput(4.6821876,-0.0032813){\tiny \color{color204}2'}
\usefont{T1}{ptm}{m}{n}
\rput(4.2,-0.9432813){\tiny \color{color204}3'}
\usefont{T1}{ptm}{m}{n}
\rput(3.2028124,-0.9432813){\tiny \color{color204}4'}
\usefont{T1}{ptm}{m}{n}
\rput(2.6792188,-0.0232813){\tiny \color{color204}5'}
\usefont{T1}{ptm}{m}{n}
\rput(3.1998436,0.8767187){\tiny \color{color204}6'}
\psellipse[linewidth=0.04,dimen=outer](6.3484373,-0.01171875)(0.8,0.8)
\psdots[dotsize=0.12](6.3484373,0.7882813)
\psdots[dotsize=0.12](6.3484373,-0.81171876)
\psdots[dotsize=0.12](6.9684377,0.44828123)
\psdots[dotsize=0.12](7.0084376,-0.43171874)
\psdots[dotsize=0.12](5.7084374,-0.45171875)
\psdots[dotsize=0.12](5.6884375,0.42828125)
\usefont{T1}{ptm}{m}{n}
\rput(6.2521877,1.0382812){1}
\usefont{T1}{ptm}{m}{n}
\rput(7.115625,0.5982813){2}
\usefont{T1}{ptm}{m}{n}
\rput(7.13375,-0.6017188){3}
\usefont{T1}{ptm}{m}{n}
\rput(6.3203125,-1.0417187){4}
\usefont{T1}{ptm}{m}{n}
\rput(5.4975,-0.64171875){5}
\usefont{T1}{ptm}{m}{n}
\rput(5.4690623,0.55828124){6}
\usefont{T1}{ptm}{m}{n}
\rput(6.7715626,0.8967187){\tiny \color{color204}1'}
\psdots[dotsize=0.12,linecolor=color204](6.689375,0.6717187)
\psdots[dotsize=0.12,linecolor=color204](7.109375,0.0117187)
\psdots[dotsize=0.12,linecolor=color204](6.729375,-0.6682813)
\psdots[dotsize=0.12,linecolor=color204](5.949375,-0.6882813)
\psdots[dotsize=0.12,linecolor=color204](5.549375,0.0117187)
\psdots[dotsize=0.12,linecolor=color204](5.969375,0.6717187)
\usefont{T1}{ptm}{m}{n}
\rput(7.3021874,-0.0032813){\tiny \color{color204}2'}
\usefont{T1}{ptm}{m}{n}
\rput(6.82,-0.9432813){\tiny \color{color204}3'}
\usefont{T1}{ptm}{m}{n}
\rput(5.8228126,-0.9432813){\tiny \color{color204}4'}
\usefont{T1}{ptm}{m}{n}
\rput(5.2992187,-0.0232813){\tiny \color{color204}5'}
\usefont{T1}{ptm}{m}{n}
\rput(5.819844,0.8767187){\tiny \color{color204}6'}
\psline[linewidth=0.04cm](1.089375,0.7717187)(1.689375,0.4117187)
\psline[linewidth=0.04cm](1.689375,0.4117187)(1.729375,-0.4282813)
\psline[linewidth=0.04cm](1.729375,-0.4282813)(1.089375,-0.7882813)
\psline[linewidth=0.04cm](1.089375,-0.7882813)(0.489375,-0.4482813)
\psline[linewidth=0.04cm](0.489375,-0.4682813)(0.469375,0.3917187)
\psline[linewidth=0.04cm](0.469375,0.3917187)(1.089375,0.7317187)
\psline[linewidth=0.04cm](3.709375,0.7717187)(4.329375,0.4117187)
\psline[linewidth=0.04cm](4.369375,-0.3882813)(3.709375,-0.7682813)
\psline[linewidth=0.04cm](3.069375,-0.4282813)(3.089375,0.4317187)
\psline[linewidth=0.04cm](5.689375,0.4117187)(6.329375,0.7517187)
\psline[linewidth=0.04cm](6.949375,0.4317187)(6.969375,-0.4482813)
\psline[linewidth=0.04cm](6.329375,-0.7682813)(5.729375,-0.4082813)
\psline[linewidth=0.04cm,linecolor=color204](3.349375,0.6517187)(4.449375,0.0117187)
\psline[linewidth=0.04cm,linecolor=color204](4.449375,0.0117187)(3.309375,-0.6882813)
\psline[linewidth=0.04cm,linecolor=color204](3.309375,-0.6882813)(3.349375,0.6717187)
\psline[linewidth=0.04cm,linecolor=color204](6.689375,0.6717187)(6.709375,-0.6482813)
\psline[linewidth=0.04cm,linecolor=color204](6.709375,-0.6482813)(5.569375,0.0117187)
\psline[linewidth=0.04cm,linecolor=color204](5.569375,0.0117187)(6.649375,0.6317187)
\end{pspicture} 
}\\
The grey coloured partitions represent the partitions of the primed numbers as used in the proof of Lemma \ref{lemma_number}.  
\end{Ex}

\newpage
\begin{Ex}
Let $\Lambda$ the Nakayama algebra with $_{s}\Gamma_{\Lambda}=\mathbb{Z}A_{5}/\langle \tau^{4} \rangle$. Then, $h_{A_{5}}=6$ and $s=\gcd(4,6)=2$. 

Hence, we are interested in the noncrossing partitions of $\{1,\ldots,6\}$ which are invariant under rotation by $\frac{2}{3}\pi$. They are listed here together with their corresponding thick subcategories of $\stmod(\Lambda)$ indicated by their indecomposables arranged in the Auslander-Reiten quiver  $_{s}\Gamma_{\Lambda}=\mathbb{Z}A_{5}/\langle \tau^{-4} \rangle$. \\
\input{picture_ncp}

\end{Ex}

\section{The $D_{n}$-case} \label{D_n}
Thanks to Reiner and Athanasiadis \cite{Reiner2}, there is a analogue description for the noncrossing partitions associated to a graph of type $D_{n}$. 

In this section we fix a linearly oriented quiver $\vec{D}_n$ with the following numbering. Again the derived category and therefore the classification of thick subcategories does not depend on this. 

\begin{xy}
  \xymatrix{
   	   &   &	       & 		&				  &			   &					& n \\
       &   &1 \ar[r] & 2 \ar[r]& \cdots \ar[r]   & n-3 \ar[r] & n-2 \ar[rd] \ar[ru]&    \\
       &   &        &			&				  &			   &					& n-1
  }
\end{xy}
 
Let $\Pi^{D}(n)$ the poset of intersection subspaces of the hyperplanes of the root system of type $D_{n}$, i.e.\ the integer vectors in $V=\mathbb{R}^{n}$ of length $\sqrt{2}$. Choose as simple roots $\alpha_{i}=e_{i}-e_{i+1}$ for $1\leq i < n$ and $\alpha_{n}=e_{n-1}+e_{n}$. Then, the hyperplane arrangement is given by
$$\{x_{i}=\pm x_{j} \mid 1 \leq i < j \leq n \}.   $$
Thus, we can consider the elements of $\Pi^{D}(n)$ as partitions $w$ of the set $[\pm n]=\{1,2,\ldots,n,-1,-2,\ldots,-n,\}$ into blocks such that
\begin{itemize}
\item if $B$ is a block, then also $-B$ is a block,
\item there is at most one zero block, i.e.\ a block containing both $i$ and $-i$,
\item the zero block, if present, does not consist of a single pair $\{i,-i\}$.
\end{itemize}
We call those elements \emph{$D_{n}$-partitions}.

Now label the vertices of a regular $(2n-2)$-gon as 
$$1,\ldots,n-1,-1,\ldots,-(n-1)$$
clockwise and label its centroid by $n$ and $-n$. Given a $D_{n}$-partition $w$ and a block $B$ of $w$, let $c(B)$ denote the convex hull of the set of points labeled with the elements of $B$. Two distinct blocks $B$ and $B'$ of $w$ are said to \emph{cross} if $c(B)$ and $c(B')$ do not coincide and one of them contains a point of the other in its relative interior. 

The poset $\NC^{D}(n)$ is defined as the $D_{n}$-partitions $w$ with the property that no two blocks of $w$ cross. 

In order to make the visualisation of the $D_n$-partitions well-defined, we additionally label the convex hull of a non-zero block containing $n$ or $-n$ by $+$ or $-$, respectively. 

The Coxeter group associated to $\Dn$ looks as follows.
Denote by $\mathcal{S}_{2n}$ the symmetric group on $[\pm n]$. For $i \neq -j$ we write $((i,j))=(i,j)(-i,-j)$. By identifying the simple reflections $s_{i}$ in $W_{\vec{D}_{n}}$ with $((i,i+1))$ for $i<n$ and $s_{n}$ with $((-(n-1),n))$, we get that the Coxeter group $W_{\vec{D}_{n}}$ is isomorphic to the subgroup of $\mathcal{S}_{2n}$ generated by the reflections $((i,j))$ for $i\neq-j$. 

For a cycle $c=(i_{1},\ldots,i_{k})$ in $\mathcal{S}_{2n}$ denote by $\bar{c}$ the cycle $(-i_{1},\ldots,-i_{k})$. We call $c\bar{c}$ a \emph{paired cycle} if $c$ and $\bar{c}$ are disjoint whereas a cycle $c=\bar{c}=(i_{1},\ldots,i_{k},-i_{1},\ldots,-i_{k})$ is called a \emph{balanced cycle}. 

One can show that each element of $W_{\vec{D}_{n}}$ is a product of disjoint paired and balanced cycles. 

\begin{Thm}[Athanasiadis/Reiner \cite{Reiner2}]
There is a bijection $$f \colon \NC_{\vec{D}_{n}} \rightarrow \NC^{D}(n).$$ For $w \in \NC_{\vec{D}_{n}}$, $f(w)$ is the partition of $[\pm n]$ 
\begin{itemize}
\item whose nonzero blocks are formed by the paired cycles of $w$ and
\item whose zero block is the union of the elements of all balanced cycles of $w$ if such exist.
\end{itemize}
The inverse $g\colon \NC^{D}(n) \rightarrow \NC_{\vec{D}_{n}}$ maps a partition $w$ to the product of disjoint cycles $g(w)$ 
\begin{itemize}
\item whose paired cycles are formed by the nonzero blocks of $w$, each ordered with respect to the order $-1,-2,\ldots,-n,1,2,\ldots,n$ and
\item whose balanced cycles are $(n,-n)$ and the cycle formed by the entries of the zero block of $w$ other than $n$ and $-n$ ordered as above, if the zero block exist.  \qed
\end{itemize}
\end{Thm}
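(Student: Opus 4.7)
The plan is to prove the bijection by combining the cycle decomposition of signed permutations in $W_{\vec{D}_{n}}$ with the geometric interpretation of reflections as chords of the labelled $(2n-2)$-gon. The argument splits into three parts: well-definedness of $f$, well-definedness of $g$, and mutual inverseness, with the noncrossing verification being by far the hardest step.

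First I would verify that $f$ lands in the set of $D_{n}$-partitions. Given $w \in \NC_{\vec{D}_{n}}$, its unique decomposition as a permutation of $[\pm n]$ into disjoint paired and balanced cycles produces blocks that are automatically closed under negation: a paired cycle $c\bar c$ contributes the two negative-to-each-other blocks given by the supports of $c$ and $\bar c$, while a balanced cycle contributes a block equal to its own negative. Merging all balanced cycles into one combined zero block ensures the at-most-one-zero-block axiom. The exclusion of a single-pair zero block $\{i,-i\}$ is automatic, since the transposition $(i,-i)$ lies in $W_{B_{n}}\setminus W_{D_{n}}$ and so cannot be the only non-trivial contribution of any $w \in W_{\vec{D}_{n}}$.

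Then I would show $f(w)$ is noncrossing. The main tool is the identity
$$w\leq \cox(\vec{D}_{n}) \iff l(w)+l(w^{-1}\cox(\vec{D}_{n}))=n,$$
combined with the geometric realisation of each reflection $((i,j))$ as a chord joining the vertices labelled $i$ and $j$ on the $(2n-2)$-gon (with the centroid representing both $n$ and $-n$). I would proceed by induction on $l(w)$, starting from a reduced $T$-factorisation $w=t_{1}\cdots t_{l}$ that extends to a reduced factorisation of $\cox(\vec{D}_{n})$, peeling off one reflection at a time. The key lemma is that whenever both $w$ and $wt$ lie below $\cox(\vec{D}_{n})$, the chord of $t$ does not cross any chord arising from a reduced factorisation of $w$. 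Handling the balanced cycles — whose block wraps around the centroid — is the delicate point and is where the specific shape of the $D_{n}$ Coxeter element enters essentially. For the inverse $g$, I would take the formula in the statement as the definition and verify it lands in $\NC_{\vec{D}_{n}}$ by reading reflection length off the cycle type and matching it with the complement in $\cox(\vec{D}_{n})$. The identities $g\circ f=\id$ and $f\circ g=\id$ then follow because the prescribed cyclic order $-1,-2,\ldots,-n,1,2,\ldots,n$ on each block exactly reproduces the original cycle factors: $\cox(\vec{D}_{n})$ acts as a rotation under this labelling, so cycles of any $w\leq \cox(\vec{D}_{n})$ are already in this cyclic order, and uniqueness of the cycle decomposition closes the loop.

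The main obstacle will be the noncrossing step. Matching the abstract absolute-order condition $w\leq \cox(\vec{D}_{n})$ with the geometric chord-crossing condition on the labelled polygon — especially in the presence of a zero block whose convex hull contains the centroid — requires care. The type-$D$ case is strictly harder than type $A$ because of the balanced cycles and the identification of $n$ with $-n$ at the centroid; in particular one must rule out partitions that would be noncrossing on the $B_{n}$ picture but not on the $D_{n}$ picture, and vice versa.
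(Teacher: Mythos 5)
The paper itself offers no proof of this theorem: it is imported verbatim from Athanasiadis and Reiner \cite{Reiner2} and stated with the proof omitted, so your proposal has to be judged on its own merits rather than against an in-paper argument. As a roadmap it is sensible --- separating well-definedness of $f$, well-definedness of $g$, and mutual inverseness, and correctly locating the difficulty in the comparison between the absolute order condition $w\leq\cox(\vec{D}_n)$ and the geometric noncrossing condition --- but as written it is a plan, not a proof, and the part you defer is precisely the content of the theorem. Your ``key lemma'' (chords coming from a reduced reflection factorisation of an element below $\cox(\vec{D}_n)$ are pairwise noncrossing on the $(2n-2)$-gon with centroid) is only asserted, and in type $D$ it cannot be run naively: multiplying by a reflection $((i,j))$ can merge two paired cycles into a balanced one or split a balanced cycle, so the induction ``peeling off one reflection at a time'' needs a precise statement about how the zero block evolves. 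In particular, you never establish the structural fact that an element of $\NC_{\vec{D}_n}$ with nontrivial balanced part has balanced cycles exactly of the form $(n,-n)$ together with one further balanced cycle, ordered compatibly with the cyclic order induced by $\cox(\vec{D}_n)$. This fact is indispensable twice: once for noncrossingness (the zero block must contain $\pm n$, i.e.\ the centroid, or its convex hull would cross other blocks), and once for $g\circ f=\id$, since $g$ reconstructs the balanced part as ``$(n,-n)$ times one cycle'' and your appeal to uniqueness of the cycle decomposition does not by itself show that $w$ has its balanced cycles in that particular shape.

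A second, smaller gap: your exclusion of a single-pair zero block $\{i,-i\}$ argues that the transposition $(i,-i)$ lies outside $W_{\vec{D}_n}$, but that only rules out $w=(i,-i)$ itself; it does not rule out a $w$ whose unique balanced cycle is $(i,-i)$ sitting alongside paired cycles. The correct reason is a parity count: each paired cycle contributes an even number of sign changes and each balanced cycle an odd number, so every element of $W_{\vec{D}_n}$ has an even number of balanced cycles, whence the union of the balanced cycles can never be a single pair. (Combined with $w\leq\cox(\vec{D}_n)$ this is also the natural route to the stronger structural statement needed above.) Until the key lemma and this balanced-cycle structure are actually proved --- which is where Athanasiadis and Reiner do their work --- the proposal records the right obstacles but does not overcome them.
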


What does conjugation by the Coxeter element mean for elements of $\NC^{D}(n)$?

\begin{Def}
Let $w \in \NC^{D}(n)$ be the visualisation of a $D_n$-partition. Denote by $\rho\colon \NC^{D}(n) \rightarrow \NC^{D}(n)$ the rotation of $w$ by $\frac{\pi}{n-1}$ and denote by $\sigma\colon \NC^{D}(n) \rightarrow \NC^{D}(n)$ the following operation: If $w \in \NC^{D}(n)$ contains a non-zero block containing $n$ or $-n$, the visualisation of this block is labeled by a sign $+$ or $-$. Then, $\sigma$ changes this sign. On blocks not containing $n$ or $-n$ or on zero-blocks $\sigma$ acts like the identity. 
\end{Def}

\begin{Lemma} \label{cox_D_n}
Let $w \in \NC_{\vec{D}_n}$ and let $f(w)$ be the corresponding element in $\NC^D(n)$. Then,
$$ f(\cox(\vec{D}_n)w\cox(\vec{D}_n)^{-1})= (\sigma \rho)(f(w)).   $$
\end{Lemma}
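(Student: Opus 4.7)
The plan is to translate conjugation by $c:=\cox(\vec{D}_n)=s_1s_2\cdots s_n$ into the geometric operations $\rho$ and $\sigma$ on $\NC^D(n)$ via the bijection $f$.

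The decisive first step is to write $c$ down explicitly as a permutation of $[\pm n]$. Using $s_i=((i,i+1))$ for $i<n$ and $s_n=((-(n-1),n))$, an induction on $k$ showing that $s_k s_{k+1}\cdots s_n$ restricts to the cycle $(k,k+1,\ldots,n-1,-k,-(k+1),\ldots,-(n-1))$ on $\{\pm k,\ldots,\pm(n-1)\}$ and to the transposition $(n,-n)$ on $\{\pm n\}$ (while fixing everything of smaller absolute value) yields
\begin{align*}
 c(i)&=i+1 \quad (1\leq i\leq n-2), & c(n-1)&=-1,\\
 c(-i)&=-(i+1) \quad (1\leq i\leq n-2), & c(-(n-1))&=1,\\
 c(n)&=-n, & c(-n)&=n.
\end{align*}
In particular each $s_i$, and hence $c$ itself, commutes with the sign involution $\iota\colon x\mapsto -x$.

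Because $c\iota=\iota c$, conjugation by $c$ sends paired cycles to paired cycles and balanced cycles to balanced cycles, and acts on an individual cycle by $c(i_1,\ldots,i_k)c^{-1}=(c(i_1),\ldots,c(i_k))$. Translating through $f$: a non-zero block $B$ of $f(w)$ becomes the block $\{c(i):i\in B\}$ of $f(cwc^{-1})$, and the zero block (if present) is likewise transported element-wise by $c$.

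It remains to identify $B\mapsto c(B)$ with $\sigma\rho$. The formulas above show that on the outer vertices $[\pm(n-1)]$ the permutation $c$ is exactly the one-step clockwise rotation of the cyclic sequence $1,2,\ldots,n-1,-1,\ldots,-(n-1)$, which coincides with $\rho$ restricted to those vertices. The labels $n$ and $-n$ occupy the centroid and are fixed by $\rho$, while $c$ swaps them. For a non-zero block containing $n$ or $-n$ this swap is precisely the sign flip encoded by $\sigma$; for the zero block, which by definition contains both $n$ and $-n$, the swap leaves the underlying set untouched, in agreement with $\sigma$ acting trivially on zero blocks. Combining all three pieces gives $f(cwc^{-1})=(\sigma\rho)(f(w))$.

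The only real technical hurdle is the explicit description of $c$, and in particular the slightly subtle interaction of $s_{n-1}=((n-1,n))$ and $s_n=((-(n-1),n))$ at the branch vertex, which is responsible for the combined ``rotate-and-swap'' pattern on $\{\pm(n-1),\pm n\}$. The rest is routine bookkeeping between cycle notation and block notation.
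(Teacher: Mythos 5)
Your proposal is correct and follows essentially the same route as the paper's proof: it computes $\cox(\vec{D}_n)=(1,2,\ldots,n-1,-1,\ldots,-(n-1))(n,-n)$ explicitly and then checks block by block that conjugation acts as the rotation $\rho$ on blocks avoiding $\pm n$, as $\sigma\rho$ (rotation plus sign swap) on a non-zero block containing $n$ or $-n$, and as plain rotation on the zero block. The only difference is that you verify the cycle form of the Coxeter element by induction, which the paper merely asserts; your aside that the zero block contains $\pm n$ ``by definition'' is loose (it follows from the noncrossing condition and the Athanasiadis--Reiner bijection, not from the definition of a zero block), but this is exactly what the paper also takes for granted.
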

\begin{proof}
A Coxeter element is given by
$$\cox(\vec{D}_n)=s_1\ldots s_n=(1,2,\ldots,n-1,-1,\ldots,-(n-1))(n,-n).$$ 
$f(w)$ is a disjoint union of blocks corresponding to a disjoint product $w$ of cycles in $\mathcal{S}_{2n}$. Thus, it is sufficient to consider the blocks separately.

If $n$ is not contained in the block, conjugation by the Coxeter element of the corresponding cycle yields as in the $A_n$-case a rotation by $\frac{2\pi}{2(n-1)}=\frac{\pi}{n-1}$. 

If $n$ is contained in a non-zero block, this block corresponds to one part $(i_1,\ldots,i_k)$ of a paired cycle with $i_k=n$ and one can easily check that 
$$\cox(\vec{D}_n)(i_1,\ldots,i_k)\cox(\vec{D}_n)^{-1}$$ is given by the `rotated cycle' in which additionally $n$ is replaced by $-n$. 

If we have a zero-block, this corresponds to the balanced cycles $(n,-n)$ and the cycle formed by the entries of the zero block other than $n$ and $-n$. The latter conjugated by the Coxeter element is again given by the `rotated cycle' and
$$\cox(\vec{D}_n)(n,-n)\cox(\vec{D}_n)^{-1}=(n,-n). $$
Hence, we get the `rotated zero-block'. 
\end{proof}

\begin{Prop} \label{D_n_odd}
Let $\mathcal{T}$ be a finite triangulated category which is connected, algebraic and standard of type $(D_n,r,t)$ excepting $(D_n,r,2)$ for $n$ even and $(D_4,r,3)$. Let $s=\gcd(h_{D_n},r)$ or $s=\gcd(h_{D_n},r+\frac{h_{\Delta}}{2})$ depending on $t$ (see Theorem \ref{classif}). Then, there is a bijective correspondence between
\begin{itemize}
\item the thick subcategories of $\mathcal{T}$, and
\item the elements of $\NC^D(n)$ invariant under $(\sigma \rho)^s$.
\end{itemize}
Note that $\sigma^s=\id$ if $s$ is even and $\sigma^s=\sigma$ if $s$ is odd, and that $\rho$ and $\sigma$ commute.  
\end{Prop}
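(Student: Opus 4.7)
The plan is to deduce the statement from Theorem \ref{classif} by transporting the Coxeter-conjugation action through the Athanasiadis--Reiner bijection $f\colon \NC_{\vec{D}_n} \to \NC^D(n)$. The types $(D_n,r,t)$ allowed in the present proposition are precisely those handled by Theorem \ref{classif}, so that theorem already supplies a bijection between the thick subcategories of $\mathcal{T}$ and the elements $w\in \NC_{\vec{D}_n}$ satisfying $w=\cox(\vec{D}_n)^s\,w\,\cox(\vec{D}_n)^{-s}$. The remaining task is only to describe the image of this invariant subset under $f$.

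For this I would iterate Lemma \ref{cox_D_n}: from the identity $f(\cox(\vec{D}_n)\,w\,\cox(\vec{D}_n)^{-1})=(\sigma\rho)(f(w))$ applied $s$ times I obtain $f(\cox(\vec{D}_n)^s\,w\,\cox(\vec{D}_n)^{-s})=(\sigma\rho)^s(f(w))$, so that $w$ is fixed by $s$-fold conjugation by $\cox(\vec{D}_n)$ precisely when $f(w)$ is fixed by $(\sigma\rho)^s$. Composing the bijection of Theorem \ref{classif} with $f$ then yields the desired correspondence with $\{v\in \NC^D(n)\mid (\sigma\rho)^s v = v\}$.

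The two auxiliary assertions at the end of the statement are immediate from the definitions. Since $\sigma$ merely toggles a $\pm$ label on blocks containing $n$ or $-n$, one has $\sigma^2=\id$, which gives the parity description of $\sigma^s$. For commutativity of $\sigma$ and $\rho$, note that the labels $n$ and $-n$ sit at the centroid of the $(2n-2)$-gon and are therefore fixed by the rotation $\rho$; thus $\rho$ preserves the collection of blocks carrying a sign and does not alter the sign itself, while $\sigma$ acts trivially on vertex positions. Consequently $\sigma\rho=\rho\sigma$ and $(\sigma\rho)^s=\sigma^s\rho^s$. No substantive obstacle arises here, because the real content lies in Theorem \ref{classif} and Lemma \ref{cox_D_n}; the only point that I would spell out with a line of explicit care is this commutativity, in order to make the parity statement transparent.
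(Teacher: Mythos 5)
Your proposal is correct and follows the same route as the paper, whose proof is literally ``Apply Theorem \ref{classif} and Lemma \ref{cox_D_n}''; iterating the lemma to pass from single to $s$-fold conjugation is exactly the intended step. The extra verification that $\sigma^2=\id$ and that $\sigma$ and $\rho$ commute is a harmless elaboration of remarks the paper states without proof.
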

\begin{proof}
Apply Theorem \ref{classif} and Lemma \ref{cox_D_n}.
\end{proof}

\begin{Ex}
Let $\Lambda$ be a self-injective representation-finite algebra with $\typ(\Lambda)=(D_5,2,2)$, i.e.\ its stable Auslander-Reiten quiver is of the form $\mathbb{Z}D_5/\langle \phi \tau^{14} \rangle$ or $\typ(\stmod(\Lambda))=(D_5,14,2)$. We have $s=\gcd(8,14+\frac{8}{2})=2$ and hence, applying Proposition \ref{D_n_odd}, we look for the $\rho^{2}$-invariant elements of $\NC^{D}(5)$. Here $\rho^{2}$ is a rotation by $\frac{\pi}{2}$. The invariance works for instance for the first two of the following partitions, but not for the third. \\
\scalebox{1} 
{
\begin{pspicture}(0,-1.2317187)(8.335313,1.2317187)
\pscircle[linewidth=0.04,dimen=outer](1.2732812,0.0248429){0.8}
\psdots[dotsize=0.12](1.23,0.8282811)
\psdots[dotsize=0.12](1.23,-0.7717188)
\psdots[dotsize=0.12](1.23,0.0282812)
\psdots[dotsize=0.12](0.49,0.0482812)
\psdots[dotsize=0.12](2.07,0.0482812)
\psdots[dotsize=0.12](1.81,0.6082814)
\psdots[dotsize=0.12](1.8299999,-0.5517187)
\psdots[dotsize=0.12](0.66999996,-0.5117188)
\psdots[dotsize=0.12](0.71,0.5682812)
\usefont{T1}{ptm}{m}{n}
\rput(1.090625,1.0582812){1}
\usefont{T1}{ptm}{m}{n}
\rput(1.9057811,0.7982812){2}
\usefont{T1}{ptm}{m}{n}
\rput(2.2329688,0.0182812){3}
\usefont{T1}{ptm}{m}{n}
\rput(1.9328126,-0.7217188){4}
\usefont{T1}{ptm}{m}{n}
\rput(1.1518749,-1.0417187){-1}
\usefont{T1}{ptm}{m}{n}
\rput(0.35546875,-0.7017187){-2}
\usefont{T1}{ptm}{m}{n}
\rput(0.11296875,0.0382812){-3}
\usefont{T1}{ptm}{m}{n}
\rput(0.39921874,0.7182813){-4}
\pscircle[linewidth=0.04,dimen=outer](4.273281,0.0048429){0.8}
\psdots[dotsize=0.12](4.23,0.8082812)
\psdots[dotsize=0.12](4.23,-0.7917187)
\psdots[dotsize=0.12](4.23,0.0082812)
\psdots[dotsize=0.12](3.4900002,0.0282812)
\psdots[dotsize=0.12](5.07,0.0282812)
\psdots[dotsize=0.12](4.81,0.5882811)
\psdots[dotsize=0.12](4.83,-0.5717187)
\psdots[dotsize=0.12](3.67,-0.5317187)
\psdots[dotsize=0.12](3.71,0.5482812)
\usefont{T1}{ptm}{m}{n}
\rput(4.0906253,1.0382812){1}
\usefont{T1}{ptm}{m}{n}
\rput(4.9057813,0.7782812){2}
\usefont{T1}{ptm}{m}{n}
\rput(5.232969,-0.0017188){3}
\usefont{T1}{ptm}{m}{n}
\rput(4.932812,-0.7417188){4}
\usefont{T1}{ptm}{m}{n}
\rput(4.151875,-1.0617187){-1}
\usefont{T1}{ptm}{m}{n}
\rput(3.3554688,-0.7217188){-2}
\usefont{T1}{ptm}{m}{n}
\rput(3.1129687,0.0182812){-3}
\usefont{T1}{ptm}{m}{n}
\rput(3.3992188,0.6982813){-4}
\psline[linewidth=0.04](0.68999994,0.5882811)(1.7900001,0.5882811)(1.81,-0.4917188)(0.68999994,-0.4917188)(0.71,0.6082814)(0.71,0.5882811)
\psline[linewidth=0.04cm](4.21,0.7682812)(4.79,0.5282812)
\psline[linewidth=0.04cm](5.0099998,0.0482812)(4.75,-0.5517187)
\psline[linewidth=0.04cm](4.23,-0.7317188)(3.71,-0.4717188)
\psline[linewidth=0.04cm](3.55,0.0482812)(3.7500002,0.5282812)
\pscircle[linewidth=0.04,dimen=outer](7.253281,-0.0151571){0.8}
\psdots[dotsize=0.12](7.21,0.7882812)
\psdots[dotsize=0.12](7.21,-0.8117187)
\psdots[dotsize=0.12](7.21,-0.0117188)
\psdots[dotsize=0.12](6.47,0.0082812)
\psdots[dotsize=0.12](8.050001,0.0082812)
\psdots[dotsize=0.12](7.79,0.5682812)
\psdots[dotsize=0.12](7.81,-0.5917187)
\psdots[dotsize=0.12](6.65,-0.5517187)
\psdots[dotsize=0.12](6.69,0.5282812)
\usefont{T1}{ptm}{m}{n}
\rput(7.070625,1.0182812){1}
\usefont{T1}{ptm}{m}{n}
\rput(7.8857813,0.7582812){2}
\usefont{T1}{ptm}{m}{n}
\rput(8.212969,-0.0217188){3}
\usefont{T1}{ptm}{m}{n}
\rput(7.912813,-0.7617188){4}
\usefont{T1}{ptm}{m}{n}
\rput(7.131875,-1.0817187){-1}
\usefont{T1}{ptm}{m}{n}
\rput(6.335469,-0.7417188){-2}
\usefont{T1}{ptm}{m}{n}
\rput(6.092969,-0.0017188){-3}
\usefont{T1}{ptm}{m}{n}
\rput(6.3792186,0.6782813){-4}
\psline[linewidth=0.04cm](7.19,0.7482812)(7.769999,0.5082812)
\psline[linewidth=0.04cm](7.9900002,0.0282812)(7.729999,-0.5717187)
\psline[linewidth=0.04cm](7.21,-0.7517188)(6.69,-0.4917188)
\psline[linewidth=0.04cm](6.5299997,0.0282812)(6.73,0.5082812)
\psline[linewidth=0.04cm](7.21,0.0282812)(7.21,0.7682812)
\psline[linewidth=0.04cm](7.19,0.0082812)(7.769999,0.5282812)
\psline[linewidth=0.04cm](6.63,-0.5117188)(7.17,-0.0317187)
\psline[linewidth=0.04cm](7.19,-0.0117188)(7.21,-0.7917187)
\usefont{T1}{ptm}{m}{n}
\rput(6.9940624,-0.4417188){+}
\usefont{T1}{ptm}{m}{n}
\rput(7.3264065,0.4182813){-}
\end{pspicture} 
}\\
\begin{footnotesize}
\begin{tabular}{p{2.8cm} p{2.7cm} l }
  $\{1\} \dotcup \{-1\} \dotcup$ 	&$\{1,2\} \dotcup \{-1,-2\} \dotcup $  & $\{1,2,-5\} \dotcup \{-1,-2,5\} \dotcup $  \\ 
  $\{3\} \dotcup \{-3\} \dotcup$	&$\{3,4\} \dotcup \{-3,-4\} \dotcup $  & $\{3,4\} \dotcup \{-3,-4\} $ \\  
  $\{2,4,5,-2,-4,-5\}$				&$\{5\} \dotcup \{-5\}$				   &                              \\
  									&									   &
\end{tabular}
\end{footnotesize} 

The corresponding thick subcategories in the first two cases are the following thick subcategories.\\
\scalebox{1} 
{
\begin{pspicture}(0,-0.615)(11.815,0.615)
\psline[linewidth=0.03cm](0.0,-0.4)(0.6,0.2)
\psline[linewidth=0.03cm](0.4,-0.4)(1.0,0.2)
\psline[linewidth=0.03cm](0.0,0.0)(0.4,-0.4)
\psline[linewidth=0.03cm](0.0,0.0)(0.2,0.2)
\psline[linewidth=0.03cm](0.2,0.2)(0.8,-0.4)
\psline[linewidth=0.03cm](0.0,0.0)(0.4,0.0)
\psline[linewidth=0.03cm](0.6,0.2)(1.2,-0.4)
\psline[linewidth=0.03cm](0.8,-0.4)(1.4,0.2)
\psline[linewidth=0.03cm](1.0,0.2)(1.6,-0.4)
\psline[linewidth=0.03cm](1.2,-0.4)(1.8,0.2)
\psline[linewidth=0.03cm](1.4,0.2)(2.0,-0.4)
\psline[linewidth=0.03cm](1.6,-0.4)(2.2,0.2)
\psline[linewidth=0.03cm](1.8,0.2)(2.4,-0.4)
\psline[linewidth=0.03cm](2.0,-0.4)(2.6,0.2)
\psline[linewidth=0.03cm](2.2,0.2)(2.8,-0.4)
\psline[linewidth=0.03cm](2.4,-0.4)(3.0,0.2)
\psline[linewidth=0.03cm](2.6,0.2)(3.2,-0.4)
\psline[linewidth=0.03cm](2.8,-0.4)(3.4,0.2)
\psline[linewidth=0.03cm](3.0,0.2)(3.6,-0.4)
\psline[linewidth=0.03cm](3.2,-0.4)(3.8,0.2)
\psline[linewidth=0.03cm](3.4,0.2)(4.0,-0.4)
\psline[linewidth=0.03cm](3.6,-0.4)(4.2,0.2)
\psline[linewidth=0.03cm](3.8,0.2)(4.4,-0.4)
\psline[linewidth=0.03cm](4.0,-0.4)(4.6,0.2)
\psline[linewidth=0.03cm](4.2,0.2)(4.8,-0.4)
\psline[linewidth=0.03cm](4.4,-0.4)(5.0,0.2)
\psline[linewidth=0.03cm](4.6,0.2)(5.2,-0.4)
\psline[linewidth=0.03cm](4.8,-0.4)(5.4,0.2)
\psline[linewidth=0.03cm](5.0,0.2)(5.6,-0.4)
\psline[linewidth=0.03cm,linestyle=dashed,dash=0.16cm 0.16cm](0.0,-0.6)(0.0,0.6)
\psline[linewidth=0.03cm,linestyle=dashed,dash=0.16cm 0.16cm](5.6,-0.6)(5.6,0.6)
\psline[linewidth=0.03cm](5.4,0.2)(5.6,0.0)
\psline[linewidth=0.03cm](5.6,0.0)(5.2,-0.4)
\psline[linewidth=0.03cm](5.6,0.0)(0.4,0.0)
\psdots[dotsize=0.12](0.185,0.215)
\psdots[dotsize=0.12](0.185,0.015)
\psdots[dotsize=0.12](0.585,-0.185)
\psdots[dotsize=0.12](0.985,0.215)
\psdots[dotsize=0.12](0.985,0.015)
\psdots[dotsize=0.12](1.385,-0.185)
\psdots[dotsize=0.12](1.785,0.215)
\psdots[dotsize=0.12](1.785,0.015)
\psdots[dotsize=0.12](2.185,-0.185)
\psdots[dotsize=0.12](2.585,0.215)
\psdots[dotsize=0.12](2.585,0.015)
\psdots[dotsize=0.12](2.985,-0.185)
\psdots[dotsize=0.12](3.385,0.215)
\psdots[dotsize=0.12](3.385,0.015)
\psdots[dotsize=0.12](3.785,-0.185)
\psdots[dotsize=0.12](4.185,0.215)
\psdots[dotsize=0.12](4.185,0.015)
\psdots[dotsize=0.12](4.585,-0.185)
\psdots[dotsize=0.12](4.985,0.215)
\psdots[dotsize=0.12](4.985,0.015)
\psdots[dotsize=0.12](5.385,-0.185)
\psline[linewidth=0.03cm](6.2,-0.4)(6.8,0.2)
\psline[linewidth=0.03cm](6.6,-0.4)(7.2,0.2)
\psline[linewidth=0.03cm](6.2,0.0)(6.6,-0.4)
\psline[linewidth=0.03cm](6.2,0.0)(6.4,0.2)
\psline[linewidth=0.03cm](6.4,0.2)(7.0,-0.4)
\psline[linewidth=0.03cm](6.2,0.0)(6.6,0.0)
\psline[linewidth=0.03cm](6.8,0.2)(7.4,-0.4)
\psline[linewidth=0.03cm](7.0,-0.4)(7.6,0.2)
\psline[linewidth=0.03cm](7.2,0.2)(7.8,-0.4)
\psline[linewidth=0.03cm](7.4,-0.4)(8.0,0.2)
\psline[linewidth=0.03cm](7.6,0.2)(8.2,-0.4)
\psline[linewidth=0.03cm](7.8,-0.4)(8.4,0.2)
\psline[linewidth=0.03cm](8.0,0.2)(8.6,-0.4)
\psline[linewidth=0.03cm](8.2,-0.4)(8.8,0.2)
\psline[linewidth=0.03cm](8.4,0.2)(9.0,-0.4)
\psline[linewidth=0.03cm](8.6,-0.4)(9.2,0.2)
\psline[linewidth=0.03cm](8.8,0.2)(9.4,-0.4)
\psline[linewidth=0.03cm](9.0,-0.4)(9.6,0.2)
\psline[linewidth=0.03cm](9.2,0.2)(9.8,-0.4)
\psline[linewidth=0.03cm](9.4,-0.4)(10.0,0.2)
\psline[linewidth=0.03cm](9.6,0.2)(10.2,-0.4)
\psline[linewidth=0.03cm](9.8,-0.4)(10.4,0.2)
\psline[linewidth=0.03cm](10.0,0.2)(10.6,-0.4)
\psline[linewidth=0.03cm](10.2,-0.4)(10.8,0.2)
\psline[linewidth=0.03cm](10.4,0.2)(11.0,-0.4)
\psline[linewidth=0.03cm](10.6,-0.4)(11.2,0.2)
\psline[linewidth=0.03cm](10.8,0.2)(11.4,-0.4)
\psline[linewidth=0.03cm](11.0,-0.4)(11.6,0.2)
\psline[linewidth=0.03cm](11.2,0.2)(11.8,-0.4)
\psline[linewidth=0.03cm,linestyle=dashed,dash=0.16cm 0.16cm](6.2,-0.6)(6.2,0.6)
\psline[linewidth=0.03cm,linestyle=dashed,dash=0.16cm 0.16cm](11.8,-0.6)(11.8,0.6)
\psline[linewidth=0.03cm](11.6,0.2)(11.8,0.0)
\psline[linewidth=0.03cm](11.8,0.0)(11.4,-0.4)
\psline[linewidth=0.03cm](11.8,0.0)(6.6,0.0)
\psdots[dotsize=0.12](6.585,-0.385)
\psdots[dotsize=0.12](7.385,-0.385)
\psdots[dotsize=0.12](8.185,-0.385)
\psdots[dotsize=0.12](8.985,-0.385)
\psdots[dotsize=0.12](9.785,-0.385)
\psdots[dotsize=0.12](10.585,-0.385)
\psdots[dotsize=0.12](11.385,-0.385)
\end{pspicture} 
}
\end{Ex}

Since we cannot express $\phi$ in terms of $S$ and $\tau$ if $n$ is even, we have to study this case separately. The following lemma and the following proposition work for arbitrary $n$ but we only have to use it for even $n$. 

\begin{Lemma} \label{D_n_phi_action}
Let $\mathcal{S}$ be a thick subcategory of $\der(\mod(k\Dn))$. Let
$$\tilde{\cox}\colon\{\text{thick subcategories of} \ \der(\mod(kD_n)) \} \rightarrow \NC^D(n). $$
Let $\phi$ be the automorphism of $\der(\mod(k\Dn))$ induced by the automorphism $\phi$ of order $2$ of $\mathbb{Z}D_n$ defined in section \ref{finite_triangulated_cat}.  Then,
$$\tilde{\cox}(\phi(\mathcal{S}))=\sigma(\tilde{\cox}(\mathcal{S})).    $$
\end{Lemma}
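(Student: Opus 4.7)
The plan is to mimic the argument of Proposition \ref{tau_action}: reduce the claim to a computation in the Coxeter group on a complete exceptional sequence of simples in $\mathcal{S}\cap\mod(k\Dn)$, and then translate the result via the bijection $f\colon\NC_{\vec{D}_n}\to\NC^D(n)$ into the combinatorial operation $\sigma$. First, I observe that $\phi$ is induced by the quiver automorphism of $\vec{D}_n$ swapping the two leaves adjacent to vertex $n-2$; hence the associated auto-equivalence of $\der(\mod k\Dn)$ restricts to an auto-equivalence of $\mod(k\Dn)$, fixing the simples $S_1,\ldots,S_{n-2}$ and swapping $S_{n-1}\leftrightarrow S_n$. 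In particular $\phi$ sends wide subcategories to wide subcategories and exceptional sequences to exceptional sequences, so if $E_1,\ldots,E_r$ is a complete exceptional sequence of simples in $\mathcal{S}\cap\mod(k\Dn)$, then $\phi(E_1),\ldots,\phi(E_r)$ is a complete exceptional sequence of simples in $\phi(\mathcal{S})\cap\mod(k\Dn)$, giving
$$\tilde{\cox}(\phi(\mathcal{S}))=s_{\phi(E_1)}\cdots s_{\phi(E_r)}.$$

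Next I identify the effect of $\phi$ on $W_{\vec{D}_n}$. Geometrically, $\phi$ exchanges the simple roots $\alpha_{n-1}=e_{n-1}-e_n$ and $\alpha_n=e_{n-1}+e_n$, hence acts on $\mathbb{R}^n$ by $e_n\mapsto -e_n$; under the embedding $W_{\vec{D}_n}\hookrightarrow\mathcal{S}_{2n}$ used in Section~\ref{D_n}, this isometry is exactly the signed transposition $\Psi:=(n,-n)$. Although $\Psi\notin W_{\vec{D}_n}$, conjugation by $\Psi$ is an outer automorphism of $W_{\vec{D}_n}$, and a direct check on simple reflections shows $\Psi\, s_i\,\Psi=s_i$ for $i\le n-2$, while $\Psi\, s_{n-1}\,\Psi=((n-1,-n))=s_n$ and $\Psi\, s_n\,\Psi=s_{n-1}$, which matches $s_{\phi(S_i)}$ in every case. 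Since $\phi$ acts isometrically on $K_0$ and reflections satisfy $s_{\phi(v)}=\phi s_v\phi^{-1}$, this identification extends to all exceptional objects, so
$$\tilde{\cox}(\phi(\mathcal{S}))=\Psi\,\tilde{\cox}(\mathcal{S})\,\Psi.$$

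Finally I transfer this equality across $f$. Writing $w=\tilde{\cox}(\mathcal{S})$ as a product of disjoint paired and balanced cycles, conjugation by $\Psi$ acts cycle by cycle by swapping the letters $n$ and $-n$ wherever they appear. Three cases suffice: a cycle not involving $\pm n$ is fixed, so the associated block of $f(w)$ is unchanged; a paired cycle $c\bar c$ with $n\in c$ and $-n\in\bar c$ is conjugated into a paired cycle in which the letter $n$ moves to the other part, which on $\NC^D(n)$ swaps the two non-zero blocks carrying the signs $+$ and $-$, exactly as $\sigma$ prescribes; finally, in the presence of a zero block the balanced cycle $(n,-n)$ is fixed by $\Psi$ and the remaining balanced cycle does not involve $\pm n$, so the zero block is also untouched, again consistent with the fact that $\sigma$ is the identity on zero blocks. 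Combining the three cases yields $f(\Psi w\Psi)=\sigma(f(w))$, which is the desired identity.

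The main obstacle is the middle step: pinning down the outer automorphism of $W_{\vec{D}_n}$ induced by $\phi$ as conjugation by the signed transposition $(n,-n)$ inside $\mathcal{S}_{2n}$, with all sign conventions consistent with the Athanasiadis--Reiner presentation in which $s_n=((-(n-1),n))$. Once this identification is established, the reduction to simples and the final case analysis on $\NC^D(n)$ are routine.
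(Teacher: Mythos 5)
Your argument is correct, and it follows the same three-stage skeleton as the paper's proof --- pass to a complete exceptional sequence of simples of $\mathcal{S}\cap\mod(k\Dn)$, compute the effect of $\phi$ on the corresponding reflections, then read off the effect on $\NC^D(n)$ block by block --- but you handle the middle stage differently. The paper works concretely: it identifies the high indecomposables $E^{n-1}_{n-t}$, $E^{n}_{n-t}$ that $\phi$ swaps, computes their reflections explicitly as $((n-t,n))$ and $((-(n-t),n))$, and invokes a mesh-category argument in the style of Riedtmann to see that the $\phi$-image of an exceptional sequence is again exceptional, before checking each type of block on the level of cycles. You instead identify the action of $\phi$ on the Grothendieck group with the diagram isometry $e_n\mapsto -e_n$, i.e.\ with conjugation by the signed transposition $\Psi=(n,-n)$ of $\mathcal{S}_{2n}$, so that $s_{\phi(E)}=\Psi s_E\Psi$ holds uniformly and the whole lemma reduces to the purely combinatorial identity $f(\Psi w\Psi)=\sigma(f(w))$; exceptionality of $\phi(E_1),\ldots,\phi(E_r)$ comes for free because $\phi$ restricts to an exact auto-equivalence of $\mod(k\Dn)$ induced by the quiver automorphism swapping the two leaves. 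What your route buys is uniformity --- no explicit reflection computations for the high representations and no appeal to the mesh category --- while the paper's computation makes the sign flip visible directly on the cycle $(i_1,\ldots,n)(-i_1,\ldots,-n)$. Your final case analysis (blocks avoiding $\pm n$ are fixed, the $\pm$-labelled pair of blocks is interchanged, the zero block is untouched since it contains both $n$ and $-n$) matches the paper's and correctly recovers $\sigma$.
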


\begin{proof}
Recall that $\phi\colon \mathbb{Z}D_n \rightarrow \mathbb{Z} D_n$ exchanges $(i,n-1)$ and $(i,n)$ $\forall i \in \mathbb{Z}$ and fixes the other vertices. According to Riedtmann \cite{Riedtmann1} we call the vertices $(i,n-1)$ and $(i,n)$ for $i \in \mathbb{Z}$ \emph{high} vertices and the others \emph{low} vertices. Accordingly, we will call the indecomposable elements of $\der(\mod(k\Dn))$ high or low. $\phi$ induces an automorphism of $\der(\mod(k\Dn))$ and hence on $\mod(k\vec{D}_n)$. Determine the Auslander-Reiten quiver of $\mod(k\vec{D}_n)$ with respect to the above orientation of $D_n$ and see that $\phi$ maps the indecomposable high representation $E^{n-1}_{n-t}$ of $\vec{D}_n$ represented by the dimension vector $e_{n-1}+e_{n-2}+\ldots+e_{n-t}$ to the high representation $E^n_{n-t}$ representated by $e_{n}+e_{n-2}+\ldots+e_{n-t}$ and vice versa. The low representations are fixed. 

We have
$$ s_{E^{n}_{n-t}}=s_n s_{n-2} \ldots s_{n-t} \ldots s_{n-2}^{-1} s_{n}^{-1} =((-(n-t),n))$$
and
$$ s_{E^{n-1}_{n-t}}=s_{n-1} s_{n-2} \ldots s_{n-t} \ldots s_{n-2}^{-1} s_{n-1}^{-1}=((n-t,n)).$$
Now let $\mathcal{S}$ be a thick subcategory and let $w$ be the corresponding $D_n$-partition. Again we consider $w$ block by block.

A pair of non-zero blocks $\{i_{1},\ldots,i_{k}\}\dotcup \{-i_{1},\ldots, -i_{k}\}$ corresponds to a paired cycle $(i_1,\ldots,i_k)(-i_1,\ldots,-i_k)$.  This is a product of reflections $$((i_1,i_2))((i_2,i_3))\ldots((i_{k-1},i_k))=s_{E_1}\ldots s_{E_{k-1}}$$
corresponding to an exceptional sequence  $E_1,\ldots,E_{k-1}$ in $\mod(k\vec{D}_n)$. 

If $n$ and $-n$ are not contained in the paired cycle, then none of $E_1,\ldots,E_{k-1}$ are high representations and hence applying $\phi$ does not change $w$. 
If $n$ is contained, we have the product of reflections
$$ (i_1,\ldots,n)(-i_1,\ldots,-n)=((i_1,i_2)) \ldots ((i_{k-1},n))$$
corresponding to an exceptional sequence $E_{i_1},\ldots,E_{i_{k-1}}=E_{i_{k-1}}^{n-1}$. A consideration of the morphisms in $k(\mathbb{Z}D_n)$ as in \cite{Riedtmann1} shows that the sequence $\phi(E_{i_1})=E_{i_1},\ldots,\phi(E_{i_{k-1}}^{n-1})=E_{i_{k-1}}^{n}$ is exceptional. Hence,
$$ s_{\phi(E_1)}\ldots s_{\phi(E_{i_{k-1}})}=((i_1,i_2))\ldots((-i_{k-1},n)) =(i_{1},\ldots,-n)(-i_1,\ldots,n).  $$

Finally, consider a zero-block represented by 
\begin{align*} (i_1,\ldots,i_k,-i_1,\ldots,-i_k)(n,-n)  &= ((i_1,i_2))\ldots((i_{k-1},i_{k}))((i_k,n))((i_k,-n))\\
&=s_{E_{i_1}} \ldots s_{E_{i_k}^{n-1}} s_{E_{i_k}^n}.
\end{align*}
Applying $\phi$ to the exceptional sequence, yields the same term since $((i_{k},n))$ and $((i_k,-n))$ commute. 
\end{proof}

\begin{Prop} \label{class_D_2}
Let $\mathcal{T}$ be a finite triangulated category which is connected, algebraic and standard of type $(D_n,r,2)$. Let $s=r \mod h_{D_n}$. There is a bijective correspondence between
\begin{itemize}
\item the thick subcategories of $\mathcal{T}$, and
\item the elements of $\NC^D(n)$ invariant under $\sigma^{s+1} \rho^s$. 
\end{itemize}
\end{Prop}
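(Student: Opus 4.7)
The strategy is to combine the main reduction Theorem \ref{maintheorem} with the explicit descriptions of the $\tau$- and $\phi$-actions on $\NC^D(n)$ from Lemma \ref{cox_D_n} and Lemma \ref{D_n_phi_action}. Note that unlike the cases treated in Theorem \ref{classif}, here we cannot reduce $\phi$ to a power of $\tau$ via Proposition \ref{Bialkowski}, so we must deal with $\phi$ separately on the combinatorial side; this is the only genuinely new ingredient, and it is exactly what Lemma \ref{D_n_phi_action} provides.

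First, since $\mathcal{T}$ is triangle equivalent to $\der(\mod(k\vec{D}_n))/\langle \phi\tau^r \rangle$, Theorem \ref{maintheorem} identifies the thick subcategories of $\mathcal{T}$ with the thick $\langle \phi\tau^r \rangle$-invariant subcategories of $\der(\mod(k\vec{D}_n))$. Next I would invoke Proposition \ref{simplif}(2), which says that $\langle\phi\tau^r\rangle$-invariance is the same as $\langle\phi\tau^s\rangle$-invariance for $s=r\bmod h_{D_n}$; this is where the particular value of $s$ in the statement enters. Composing the bijections of Theorem \ref{Bruening}, Theorem \ref{Ingalls} and the Athanasiadis--Reiner bijection $f\colon\NC_{\vec{D}_n}\to\NC^D(n)$, I obtain a bijection $\Psi\colon\{\text{thick subcategories of }\der(\mod(k\vec{D}_n))\}\to\NC^D(n)$.

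The core of the proof is to transport the action of $\phi\tau^s$ through $\Psi$. By Proposition \ref{tau_action} combined with Lemma \ref{cox_D_n}, the endofunctor $\tau$ corresponds under $\Psi$ to the map $\sigma\rho$ on $\NC^D(n)$, and by Lemma \ref{D_n_phi_action} the endofunctor $\phi$ corresponds to $\sigma$. Since $\phi$ and $\tau$ commute as automorphisms of $\mathbb{Z}D_n$ (and hence of $\der(\mod(k\vec{D}_n))$), and since $\sigma$ and $\rho$ commute on $\NC^D(n)$ (as recalled in Proposition \ref{D_n_odd}), the composite $\phi\tau^s$ corresponds to
\[
\sigma\circ(\sigma\rho)^s \;=\; \sigma^{s+1}\rho^s.
\]
Thus $\mathcal{S}$ is $\langle\phi\tau^s\rangle$-invariant if and only if the noncrossing partition $\Psi(\mathcal{S})\in\NC^D(n)$ is fixed by $\sigma^{s+1}\rho^s$, which gives the claimed bijection.

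The only subtle point I expect to have to double-check is that the actions of $\phi$ and $\tau$ really do commute with each other and can be intertwined freely under $\Psi$; this follows from the fact that $\phi$ and $\tau$ commute as automorphisms of the translation quiver $\mathbb{Z}D_n$ (direct verification on vertices) together with the commutativity of $\sigma$ and $\rho$ on the combinatorial side. Everything else is essentially a packaging of already-established lemmas.
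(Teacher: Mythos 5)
Your proposal is correct and follows essentially the same route as the paper: reduce via Theorem \ref{maintheorem} and Proposition \ref{simplif}(2) to $\langle\phi\tau^s\rangle$-invariant thick subcategories of $\der(\mod(k\vec{D}_n))$, then transport the action through the Brüning/Ingalls--Thomas/Athanasiadis--Reiner bijection using Proposition \ref{tau_action} with Lemma \ref{cox_D_n} (giving $\tau\mapsto\sigma\rho$) and Lemma \ref{D_n_phi_action} (giving $\phi\mapsto\sigma$), so that $\phi\tau^s$ acts as $\sigma^{s+1}\rho^s$. The paper's own proof is just a terser statement of exactly these steps, so no discrepancy to report.
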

\begin{proof}
Use the second part of Proposition \ref{simplif} for the reduction to $s=r \mod h_{D_n}$. Then, apply Lemma \ref{cox_D_n} and Lemma \ref{D_n_phi_action}. 
\end{proof}

The combinatorial description enables us again to determine the number of thick subcategories. We only proof the formulas in case $\typ(\mathcal{T})=(D_n,r,1)$. The other cases work similarly. 

Note that the proof again implies a description of the relevant elements. 

\begin{Prop} \label{nb_D_1}
Let $\mathcal{T}$ be a finite triangulated category which is connected, algebraic and standard of type $(D_n,r,1)$. Let $s=\gcd(h_{D_n}=2n-2,r)$. 

If $s \notin \{2n-2,n-1\}$, the number of thick subcategories equals $\binom{2p}{p}$ where $p=\gcd(n-1,s)$.\\ 
If $s=2n-2$ or if $s=n-1$ with $n$ even, the number of thick subcategories of $\mathcal{T}$ equals $\Cat(D_n)=\binom{2n}{n}-\binom{2n-2}{n-1}$. 

If $s=n-1$ with $n$ odd, the number equals $\Cat(D_{n-1})=\binom{2(n-1)}{n-1}-\binom{2(n-2)}{n-2}$. 
\end{Prop}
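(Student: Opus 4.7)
By Proposition~\ref{D_n_odd} the thick subcategories of $\mathcal{T}$ are in bijection with the $(\sigma\rho)^s$-invariant elements of $\NC^D(n)$. Since $\sigma$ and $\rho$ commute and $\sigma^2=\id$, one has $(\sigma\rho)^s=\sigma^s\rho^s$ with $\sigma^s=\id$ for even $s$ and $\sigma^s=\sigma$ for odd $s$. The key geometric fact is that $\rho^{n-1}$ is rotation by $\pi$: it sends each outer $(2n-2)$-gon vertex $i$ to $-i$ and fixes the centroid $\{n,-n\}$. My plan is to handle $s$ by case analysis. For $s=2n-2$ both $\rho^s$ and $\sigma^s$ are trivial, so every $w\in\NC^D(n)$ is invariant and the count is $|\NC^D(n)|=\Cat(D_n)$. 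For $s=n-1$ with $n$ even the operator $\sigma\rho^{n-1}$ sends any block $B$ precisely to $-B$ (the rotation negates the outer indices and $\sigma$ toggles the centroid label), which is already a block of any $D_n$-partition; hence every partition is invariant and we again obtain $\Cat(D_n)$.

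For $s=n-1$ with $n$ odd we count $\rho^{n-1}$-invariant partitions. Since $\rho^{n-1}$ fixes the centroid, any non-zero block $B$ containing $n$ must be sent to another non-zero block containing $n$; as such a block is uniquely determined, its outer index set is forced to be closed under negation, which would promote $B$ to a zero block -- a contradiction. Thus $n$ and $-n$ must lie in singletons or in the zero block of any invariant partition. I would then verify that removing $\{n,-n\}$ from the centroid and re-interpreting the remaining $(2n-2)$-gon as the outer polygon of $\NC^D(n-1)$, with the newly promoted pair $\{n-1,-(n-1)\}$ playing the role of the new centroid, provides a bijection with $\NC^D(n-1)$. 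This would yield the count $\Cat(D_{n-1})$.

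Finally, for $s\notin\{2n-2,n-1\}$, set $p=\gcd(n-1,s)$. Following the strategy of Lemma~\ref{lemma_number}, the plan is to construct a quotient-and-lift bijection between the $(\sigma^s\rho^s)$-invariant elements of $\NC^D(n)$ and $\NC^B(p)$: the $\rho^s$-orbits of the outer vertices have uniform size $2(n-1)/\gcd(s,2(n-1))$ and organise into $2p$ orbits respecting the antipodal involution, so an invariant partition pushes forward to a noncrossing $B_p$-partition on the quotient $2p$-gon (with a possible zero block corresponding to centroid-containing blocks upstairs). The count $|\NC^B(p)|=\binom{2p}{p}$ then yields the desired formula. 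The main obstacle will be to verify that each $\NC^B(p)$-partition lifts \emph{uniquely} to a $(\sigma^s\rho^s)$-invariant $D_n$-partition; this is precisely where the exclusion $s\notin\{2n-2,n-1\}$ is decisive, since in the excluded cases additional central symmetries produce multiple lifts, analogous to the $s+1$-fold fibres appearing in Lemma~\ref{lemma_number}.
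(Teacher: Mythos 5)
Your handling of $s=2n-2$ and of $s=n-1$ with $n$ even agrees with the paper and is fine, but the main case $s\notin\{2n-2,n-1\}$ has a genuine gap. Your quotient-and-lift plan is built on two claims that do not hold as stated. First, the $\langle\rho^s\rangle$-orbits of the $2n-2$ outer vertices number $s$ (since $s\mid 2n-2$), not $2p$: for $n=5$, $s=2$ one gets the two orbits $\{1,3,-1,-3\}$ and $\{2,4,-2,-4\}$, while $2p=4$. Second, and decisively, the assertion that every $\NC^{B}(p)$-partition on the quotient lifts \emph{uniquely} to an invariant $D_n$-partition is exactly the content of the count and is left unverified; it is also not the generic behaviour of such push-forwards: your own reference, Lemma \ref{lemma_number}, shows that in the type-$A$ analogue the fibres of the natural quotient map have $s+1>1$ elements whenever the rotation is non-trivial, so one cannot expect unique lifting to hold "except in the excluded cases". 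The paper proceeds differently: it first shows that for $s\notin\{2n-2,n-1\}$ no invariant partition can contain a signed non-zero block through the centroid (if $n\in B$ then $(\sigma\rho)^s(B)$ is a block containing $n$ or $-n$ which is neither $B$ nor $-B$ yet not disjoint from them); since the remaining partitions are automatically invariant under rotation by $\pi$, invariance under $\rho^{s}$ reduces to invariance under $\rho^{p}$ with $p=\gcd(n-1,s)$; forgetting the centroid identifies these with the rotation-invariant elements of $\NC^{A}(2n-2)$ (single-pair zero blocks cannot arise because $p\neq n-1$); and finally Reiner's Proposition \ref{B_correspondence} is quoted to land in $\NC^{B}(p)$ and obtain $\binom{2p}{p}$. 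Your plan omits the exclusion step entirely (you even allow "centroid-containing blocks upstairs"), never explains why $p=\gcd(n-1,s)$ should appear, and replaces the citation of Reiner's theorem by a statement you would have to prove from scratch.

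For $s=n-1$ with $n$ odd, your observation that $n$ and $-n$ must be singletons or lie in the zero block is correct and parallels the paper's exclusion of $\NC^{D}_{\pm}(n)$, but the proposed bijection with $\NC^{D}(n-1)$ obtained by promoting $\{n-1,-(n-1)\}$ to the centroid is only a guess. The domain consists of $\pi$-rotation-invariant partitions on a $(2n-2)$-gon plus centroid, the codomain is all of $\NC^{D}(n-1)$ on a $(2n-4)$-gon plus centroid, and you give no argument that the recipe is well defined, let alone injective or surjective; the agreement of cardinalities is precisely what has to be proved. The paper instead counts: the $\pi$-invariant elements of $\NC^{A}(2n-2)$ number $\binom{2(n-1)}{n-1}$, and those which do not correspond to admissible $D_n$-partitions --- the ones whose central block would be a forbidden single pair $\{i,-i\}$ --- are counted via the map $F$ of Lemma \ref{lemma_number} to be $\binom{2(n-2)}{n-2}$, yielding $\Cat(D_{n-1})$. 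To salvage your approach you would either have to carry out this subtraction argument or genuinely construct and verify the bijection you postulate.
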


\begin{proof}
Denote by $\NC^D_\pm(n)$ the elements of $\NC^D(n)$ with a non-zero block containing $n$ or $-n$. 

Assume $s \notin \{2n-2,n-1\}$. The elements of $\NC^{D}_\pm(n)$ are not $(\sigma \rho)^s$-invariant: Let $w \in \NC^D_\pm(n)$ and let $B$ be a block of $w$ with $n \in B$. Then $C:=(\sigma \rho)^s(B)$ is a block containing $n$ or $-n$ which is neither $B$ itself nor $-B=\sigma \rho^{n-1}(B)$. If the partition was $(\sigma \rho)^s$-invariant, $C$ would be a block of $w$, but this is not possible since $C$ and $B$ or $-B$ are neither equal nor disjoint.

Hence, we have a correspondence between 
\begin{itemize}
\item the thick subcategories of $\mathcal{T}$, and
\item the elements of $\NC^D(n) \setminus \NC^D_\pm(n)$ invariant under rotation by $s\frac{\pi}{n-1}$.
\end{itemize}
Let $p=\gcd(n-1,s)$. Since each element of $\NC^D(n) \setminus \NC^D_\pm(n)$ is invariant under rotation by $(n-1)\frac{\pi}{n-1}$, we can add a correspondence to
\begin{itemize}
\item the elements of $\NC^D(n) \setminus \NC^D_\pm(n)$ invariant under rotation by $p\frac{\pi}{n-1}$. 
\end{itemize}
These elements, in turn, are canonically in correspondence to
\begin{itemize}
\item the elements of $\NC^{A}(2n-2)$ invariant under rotation by $p\frac{\pi}{n-1}$. 
\end{itemize} 
For this purpose, just forget the centroid. Note that a zero-block containing only a single pair $i$ and $-i$ --- which is not allowed for $\NC^D(n)$ --- does not appear since $p\neq n-1$ by assumption. 

Since $p$ is a non-trivial divisor of $2n-2$, by Proposition \ref{B_correspondence} the above elements correspond to
\begin{itemize}
\item $\NC^B(p)$
\end{itemize}
and the number of elements in that is equal to $\binom{2p}{p}$. 

Next, assume $s=2n-2$. Then, the number of thick subcategories equals the number of elements in $\NC^D(n)$ invariant under rotation by $2\pi$, these are all, and by \cite{Reiner2} their number equals the type $D$ Catalan number $\Cat(D_n)$ stated above.  

If $s=n-1$ where $n$ is even, we have to count the elements of $\NC^D(n)$ invariant under $\sigma \rho^{n-1}$, this is again all of $\NC^D(n)$ by definition. 

Finally, let $s=n-1$ and let $n$ be odd. We look for the $(\sigma \rho)^{n-1}=\rho^{n-1}$-invariant elements of $\NC^{D}(n)$. These are the elements $\NC^D(n)\setminus \NC^D_\pm(n)$. As above we compare this with the elements of $\NC^{A}(2n-2)$ which are invariant under rotation by $(n-1)\frac{\pi}{n-1}=\pi$. But this time, this set, which consists of $\binom{2(n-1)}{n-1}$ elements, is too big. We have to ignore the elements which would correspond to a single pair $\{i,-i\}$ in $\NC^D(n)$. With the help of Lemma \ref{lemma_number} we can count these elements. Recall the surjective map
$$F\colon\{\text{elements of} \ \NC^{A}(2n-2) \ \text{inv.\ under rotation by} \ \pi \} \rightarrow \NC^{A}(n-1).   $$ 
Let $w$ be a partition of $\NC^{A}(2n-2)$ which we want to ignore, i.e.\ $w$ contains exactly one block of the form $\{i,i+(n-1)\}$ where $1\leq i \leq n-1$. Then, $F(w)$ contains a singleton $\{i\}$, and $F(w) \setminus \{i\}$ can be understood as an arbitrary element of $\NC^{A}(n-1)$. Hence, the preimage of $F(w)$ consists of $C_{n-2}=\frac{1}{n-1}\binom{2(n-2)}{n-2}$ elements. There are $n-1$ possibilities to place the `forbidden' block in $w$ and thus, we subtract $(n-1)\frac{1}{n-1}\binom{2(n-2)}{n-2}=\binom{2(n-2)}{n-2}$.   
\end{proof}

\begin{Prop} \label{nb_D_2}
Let $\mathcal{T}$ be a finite triangulated category which is connected, algebraic and standard of type $(D_n,r,2)$ with $n$ odd. Let $s=\gcd(2n-2,r+(n-1))$.

If $s \notin \{2n-2,n-1\}$, the number of thick subcategories equals $\binom{2p}{p}$ where $p=\gcd(n-1,s)$.

If $s=2n-2$, the number equals $\Cat(D_n)$. If $s=n-1$, the number equals $\Cat(D_{n-1})$. \qed
\end{Prop}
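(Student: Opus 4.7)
My plan is to follow the proof of Proposition \ref{nb_D_1} essentially verbatim, exploiting the parity of $s$ to collapse the $t=2$, $n$ odd case onto the same counts. First I would apply Theorem \ref{classif}, which for $\typ(\mathcal{T})=(D_n,r,2)$ with $n$ odd prescribes $p=\frac{h_{D_n}}{2}+r=(n-1)+r$ and hence $s=\gcd(2n-2,r+(n-1))$. Since $(D_n,r,2)$ with $n$ odd is not among the two types excluded in Proposition \ref{D_n_odd}, that proposition applies and identifies the thick subcategories of $\mathcal{T}$ with the $(\sigma\rho)^s$-invariant elements of $\NC^D(n)$, which is what remains to be counted.

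For $s\notin\{n-1,2n-2\}$ I would repeat the key step from the proof of Proposition \ref{nb_D_1}: no element of $\NC^D_\pm(n)$ can be $(\sigma\rho)^s$-invariant, because if $B$ is a block containing $n$, then $(\sigma\rho)^s(B)$ is a block containing $\pm n$ distinct from $B$ and $-B$, contradicting the partition axioms. The remaining invariant elements lie in $\NC^D(n)\setminus\NC^D_\pm(n)$, where $\sigma$ acts trivially, so the invariance condition reduces to rotation by $p\frac{\pi}{n-1}$ with $p=\gcd(n-1,s)$. Because the only divisors of $2(n-1)$ divisible by $n-1$ are $n-1$ and $2(n-1)$, both excluded, one has $p<n-1$; forgetting the centroid then identifies this set bijectively with the rotation-invariant elements of $\NC^A(2n-2)$, and Proposition \ref{B_correspondence} delivers $|\NC^B(p)|=\binom{2p}{p}$.

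The remaining two subcases are then immediate. When $s=2n-2$, both $\sigma^s=\id$ (since $s$ is even and $\sigma$ has order $2$) and $\rho^s=\id$ (as $\rho$ has order $2n-2$), so every element of $\NC^D(n)$ is invariant, and the count is $\Cat(D_n)$ by Athanasiadis--Reiner. When $s=n-1$, the hypothesis that $n$ is odd forces $n-1$ to be even, so $\sigma^{n-1}=\id$ and $(\sigma\rho)^{n-1}=\rho^{n-1}$; this is precisely the scenario treated in the final paragraph of the proof of Proposition \ref{nb_D_1}, and the same argument --- comparing with rotation-invariant elements of $\NC^A(2n-2)$ and subtracting the forbidden single-pair zero blocks by means of Lemma \ref{lemma_number} and the Simion--Ullman involution --- yields $\Cat(D_{n-1})=\binom{2(n-1)}{n-1}-\binom{2(n-2)}{n-2}$. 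The only real obstacle is a careful verification of the parities of $\sigma^s$ in each subcase; once those are settled, the enumerative content is inherited unchanged from the $t=1$ analysis.
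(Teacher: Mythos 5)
Your proposal is correct and follows exactly the route the paper intends: the paper proves only the $(D_n,r,1)$ count (Proposition \ref{nb_D_1}) and declares the other cases "similar", and your argument is precisely that adaptation --- Theorem \ref{classif}/Proposition \ref{D_n_odd} to get $(\sigma\rho)^s$-invariance with $s=\gcd(2n-2,r+(n-1))$, the exclusion of $\NC^D_\pm(n)$ and reduction to $\rho^{\gcd(n-1,s)}$-invariant elements of $\NC^A(2n-2)$ via Proposition \ref{B_correspondence} in the generic case, and the parity observations ($n$ odd forces $s=n-1$ even, so $(\sigma\rho)^{n-1}=\rho^{n-1}$) that collapse the boundary cases onto the $\Cat(D_n)$ and $\Cat(D_{n-1})$ counts from the final part of the proof of Proposition \ref{nb_D_1}. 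No gaps beyond what the paper itself leaves implicit.
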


\begin{Prop} \label{nb_D_3}
Let $\mathcal{T}$ be a finite triangulated category which is connected, algebraic and standard of type $(D_n,r,2)$ with $n$ even. Let $s=r \mod (2n-2)$.

If $s \notin \{0,n-1\}$, the number of thick subcategories equals $\binom{2p}{p}$ where $p=\gcd(n-1,s)$.

If $s=0$ or if $s=n-1$, the number equals $\Cat(D_{n-1})$.  \qed
\end{Prop}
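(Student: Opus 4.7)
The plan is to adapt the proof of Proposition \ref{nb_D_1} to the twisted setting given by Proposition \ref{class_D_2}. By that proposition, thick subcategories of $\mathcal{T}$ are in bijection with elements of $\NC^D(n)$ invariant under $\sigma^{s+1}\rho^s$. Since $n$ is even, $\sigma^{s+1}$ equals $\id$ when $s$ is odd and $\sigma$ when $s$ is even; in particular, for $s=0$ the condition is pure $\sigma$-invariance, while for $s=n-1$ (which is odd since $n$ is even) it reduces to $\rho^{n-1}$-invariance, so both boundary cases collapse to conditions already analysed in the type~$1$ setting.

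For the generic case $s\notin\{0,n-1\}$, the first step is to show that no element of $\NC^D_\pm(n)$ can be invariant. The unique non-zero block $B$ containing $n$ would have to satisfy $\rho^s(B)=B$ (when $s$ is odd) or $\sigma\rho^s(B)=-B$ (when $s$ is even). A non-zero block containing the centroid has an outer part which is an arc of the $(2n-2)$-gon, and direct inspection shows that a proper arc is invariant under $\rho^s$ only if $s\equiv 0\pmod{2n-2}$, while the equation $\rho^s(\text{outer of }B)=-(\text{outer of }B)$ forces $s\equiv n-1\pmod{2n-2}$; both are excluded by assumption. Hence all invariant partitions lie in $\NC^D(n)\setminus\NC^D_\pm(n)$.

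On $\NC^D(n)\setminus\NC^D_\pm(n)$ the operator $\sigma$ acts trivially, because there is no non-zero block containing $\pm n$ on which $\sigma$ can flip a sign; moreover these partitions are automatically invariant under $\rho^{n-1}$, since their outer parts are closed under negation. The invariance condition therefore collapses to $\rho^p$-invariance with $p=\gcd(n-1,s)$, and since $s\notin\{0,n-1\}$ and $0\le s<2n-2$, one verifies that $p$ is a proper divisor of $2n-2$. Forgetting the centroid then provides the standard bijection with the elements of $\NC^A(2n-2)$ invariant under rotation by $p\cdot 2\pi/(2n-2)$, and Proposition~\ref{B_correspondence} identifies these with $\NC^B(p)$, whose cardinality is $\binom{2p}{p}$.

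The residual cases $s=0$ and $s=n-1$ are handled exactly as in the proofs of Propositions \ref{nb_D_1} and \ref{nb_D_2}: in both situations the relevant invariance condition characterises $\NC^D(n)\setminus\NC^D_\pm(n)$ (possibly after accounting for the extra $\sigma$-factor, which leaves these partitions untouched), and the standard bijection with $\NC^D(n-1)$ yields the count $\Cat(D_{n-1})$. The main obstacle is the first step above, namely ruling out $\NC^D_\pm(n)$-invariants for $s$ even with $s\notin\{0,n-1\}$: here one must track both the outer rotation and the sign flip simultaneously, and the analysis has to distinguish single-vertex arcs from longer arcs before concluding that the diophantine conditions on $s$ modulo $2(n-1)$ leave no solutions outside the excluded values.
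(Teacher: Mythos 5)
Your overall route is the intended one: the paper proves only the $(D_n,r,1)$ count in detail and asserts that the remaining cases "work similarly", and your reduction is exactly that adaptation — invoke Proposition \ref{class_D_2}, split according to the parity of $s$ (so that $\sigma^{s+1}\rho^s$ becomes $\rho^s$ or $\sigma\rho^s$), observe that $\sigma$ acts trivially on $\NC^D(n)\setminus\NC^D_\pm(n)$ and that these elements are automatically $\rho^{n-1}$-invariant, pass to $\rho^{p}$-invariance with $p=\gcd(n-1,s)$, forget the centroid and apply Proposition \ref{B_correspondence}, and treat $s=0$ and $s=n-1$ as in Propositions \ref{nb_D_1} and \ref{nb_D_2}. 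The numerical conclusions in all cases agree with the statement.

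There is, however, a genuine flaw in the justification of your key step (which you yourself single out as the main obstacle): the outer part of a non-zero block containing $n$ or $-n$ is \emph{not} in general an arc of the $(2n-2)$-gon. For example, in $\NC^D(5)$ the partition with blocks $\{1,3,5\}$, $\{-1,-3,-5\}$ and singletons $\{2\},\{-2\},\{4\},\{-4\}$ is noncrossing — it corresponds to the paired cycle $(1,3,5)(-1,-3,-5)$, which satisfies $l(w)+l(w^{-1}\cox(\vec{D}_5))=5$ and hence lies in $\NC_{\vec{D}_5}$ — and its signed block has the non-consecutive outer part $\{1,3\}$. So "direct inspection of proper arcs" does not establish that elements of $\NC^D_\pm(n)$ are never $\sigma^{s+1}\rho^s$-invariant. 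The statement you need is nevertheless true, and the repair is the argument used for Proposition \ref{nb_D_1}: if $B=A\cup\{n\}$ belonged to an invariant partition, uniqueness of the blocks containing $n$, resp.\ $-n$, forces $\rho^s(A)=A$ ($s$ odd) or $\rho^s(A)=-A$ ($s$ even), i.e.\ $A$ is stable under a non-trivial rotation; but then the centroid lies in the relative interior of $c(B)$, so $B$ crosses $-B$, contradicting $w\in\NC^D(n)$. Two smaller points: in the generic case you should also record, as the paper does, that no diametral pair block $\{i,-i\}$ can occur after forgetting the centroid because $p\notin\{0,n-1\}$, so the passage to $\NC^A(2n-2)$ and then to $\NC^B(p)$ is a genuine bijection; and in the boundary cases the paper does not use a bijection with $\NC^D(n-1)$ but counts the $\pi$-rotation-invariant elements of $\NC^A(2n-2)$ via Lemma \ref{lemma_number} and subtracts $\binom{2(n-2)}{n-2}$ — your appeal to a "standard bijection" gives the same number but is not established in this paper.
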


\begin{Ex}
Back to the previous example of a self-injective algebra $\Lambda$ with $\typ(\stmod(\Lambda))=(D_5,14,2)$. We have $s=\gcd(8,18)=2$ and $p=\gcd(4,2)=2$. The $\binom{4}{2}=6$ elements of $\NC^D(5)$ invariant under rotation by $\frac{\pi}{2}$ \\
\scalebox{1} 
{
\begin{pspicture}(0,-2.5317187)(8.335313,2.5317187)
\pscircle[linewidth=0.04,dimen=outer](1.2732812,1.3248429){0.8}
\psdots[dotsize=0.12](1.23,2.128281)
\psdots[dotsize=0.12](1.23,0.5282812)
\psdots[dotsize=0.12](1.23,1.3282812)
\psdots[dotsize=0.12](0.49,1.3482811)
\psdots[dotsize=0.12](2.07,1.3482811)
\psdots[dotsize=0.12](1.81,1.9082814)
\psdots[dotsize=0.12](1.8299999,0.7482813)
\psdots[dotsize=0.12](0.66999996,0.7882812)
\psdots[dotsize=0.12](0.71,1.8682812)
\usefont{T1}{ptm}{m}{n}
\rput(1.090625,2.3582811){1}
\usefont{T1}{ptm}{m}{n}
\rput(1.9057811,2.0982811){2}
\usefont{T1}{ptm}{m}{n}
\rput(2.2329688,1.3182812){3}
\usefont{T1}{ptm}{m}{n}
\rput(1.9328126,0.5782812){4}
\usefont{T1}{ptm}{m}{n}
\rput(1.1518749,0.2582813){-1}
\usefont{T1}{ptm}{m}{n}
\rput(0.35546875,0.5982813){-2}
\usefont{T1}{ptm}{m}{n}
\rput(0.11296875,1.3382812){-3}
\usefont{T1}{ptm}{m}{n}
\rput(0.39921874,2.0182812){-4}
\pscircle[linewidth=0.04,dimen=outer](4.273281,1.304843){0.8}
\psdots[dotsize=0.12](4.23,2.1082811)
\psdots[dotsize=0.12](4.23,0.5082813)
\psdots[dotsize=0.12](4.23,1.3082812)
\psdots[dotsize=0.12](3.4900002,1.3282812)
\psdots[dotsize=0.12](5.07,1.3282812)
\psdots[dotsize=0.12](4.81,1.8882811)
\psdots[dotsize=0.12](4.83,0.7282813)
\psdots[dotsize=0.12](3.67,0.7682813)
\psdots[dotsize=0.12](3.71,1.8482811)
\usefont{T1}{ptm}{m}{n}
\rput(4.0906253,2.3382812){1}
\usefont{T1}{ptm}{m}{n}
\rput(4.9057813,2.0782812){2}
\usefont{T1}{ptm}{m}{n}
\rput(5.232969,1.2982812){3}
\usefont{T1}{ptm}{m}{n}
\rput(4.932812,0.5582812){4}
\usefont{T1}{ptm}{m}{n}
\rput(4.151875,0.2382813){-1}
\usefont{T1}{ptm}{m}{n}
\rput(3.3554688,0.5782812){-2}
\usefont{T1}{ptm}{m}{n}
\rput(3.1129687,1.3182812){-3}
\usefont{T1}{ptm}{m}{n}
\rput(3.3992188,1.9982812){-4}
\psline[linewidth=0.04](0.68999994,1.8882811)(1.7900001,1.8882811)(1.81,0.8082812)(0.68999994,0.8082812)(0.71,1.9082814)(0.71,1.8882811)
\pscircle[linewidth=0.04,dimen=outer](7.253281,1.2848428){0.8}
\psdots[dotsize=0.12](7.21,2.0882812)
\psdots[dotsize=0.12](7.21,0.4882813)
\psdots[dotsize=0.12](7.21,1.2882812)
\psdots[dotsize=0.12](6.47,1.3082812)
\psdots[dotsize=0.12](8.050001,1.3082812)
\psdots[dotsize=0.12](7.79,1.8682812)
\psdots[dotsize=0.12](7.81,0.7082813)
\psdots[dotsize=0.12](6.65,0.7482813)
\psdots[dotsize=0.12](6.69,1.8282812)
\usefont{T1}{ptm}{m}{n}
\rput(7.070625,2.3182812){1}
\usefont{T1}{ptm}{m}{n}
\rput(7.8857813,2.0582812){2}
\usefont{T1}{ptm}{m}{n}
\rput(8.212969,1.2782812){3}
\usefont{T1}{ptm}{m}{n}
\rput(7.912813,0.5382812){4}
\usefont{T1}{ptm}{m}{n}
\rput(7.131875,0.2182813){-1}
\usefont{T1}{ptm}{m}{n}
\rput(6.335469,0.5582812){-2}
\usefont{T1}{ptm}{m}{n}
\rput(6.092969,1.2982812){-3}
\usefont{T1}{ptm}{m}{n}
\rput(6.3792186,1.9782813){-4}
\pscircle[linewidth=0.04,dimen=outer](1.2732812,-1.2751571){0.8}
\psdots[dotsize=0.12](1.23,-0.4717189)
\psdots[dotsize=0.12](1.23,-2.0717187)
\psdots[dotsize=0.12](1.23,-1.2717187)
\psdots[dotsize=0.12](0.49,-1.2517188)
\psdots[dotsize=0.12](2.07,-1.2517188)
\psdots[dotsize=0.12](1.81,-0.6917186)
\psdots[dotsize=0.12](1.8299999,-1.8517187)
\psdots[dotsize=0.12](0.66999996,-1.8117188)
\psdots[dotsize=0.12](0.71,-0.7317188)
\usefont{T1}{ptm}{m}{n}
\rput(1.090625,-0.2417188){1}
\usefont{T1}{ptm}{m}{n}
\rput(1.9057811,-0.5017188){2}
\usefont{T1}{ptm}{m}{n}
\rput(2.2329688,-1.2817189){3}
\usefont{T1}{ptm}{m}{n}
\rput(1.9328126,-2.0217187){4}
\usefont{T1}{ptm}{m}{n}
\rput(1.1518749,-2.3417187){-1}
\usefont{T1}{ptm}{m}{n}
\rput(0.35546875,-2.0017188){-2}
\usefont{T1}{ptm}{m}{n}
\rput(0.11296875,-1.2617188){-3}
\usefont{T1}{ptm}{m}{n}
\rput(0.39921874,-0.5817187){-4}
\pscircle[linewidth=0.04,dimen=outer](4.273281,-1.2951571){0.8}
\psdots[dotsize=0.12](4.23,-0.4917188)
\psdots[dotsize=0.12](4.23,-2.0917187)
\psdots[dotsize=0.12](4.23,-1.2917188)
\psdots[dotsize=0.12](3.4900002,-1.2717187)
\psdots[dotsize=0.12](5.07,-1.2717187)
\psdots[dotsize=0.12](4.81,-0.7117189)
\psdots[dotsize=0.12](4.83,-1.8717186)
\psdots[dotsize=0.12](3.67,-1.8317187)
\psdots[dotsize=0.12](3.71,-0.7517188)
\usefont{T1}{ptm}{m}{n}
\rput(4.0906253,-0.2617188){1}
\usefont{T1}{ptm}{m}{n}
\rput(4.9057813,-0.5217188){2}
\usefont{T1}{ptm}{m}{n}
\rput(5.232969,-1.3017188){3}
\usefont{T1}{ptm}{m}{n}
\rput(4.932812,-2.0417187){4}
\usefont{T1}{ptm}{m}{n}
\rput(4.151875,-2.3617187){-1}
\usefont{T1}{ptm}{m}{n}
\rput(3.3554688,-2.0217187){-2}
\usefont{T1}{ptm}{m}{n}
\rput(3.1129687,-1.2817189){-3}
\usefont{T1}{ptm}{m}{n}
\rput(3.3992188,-0.6017187){-4}
\psline[linewidth=0.04cm](4.21,-0.5317188)(4.79,-0.7717188)
\psline[linewidth=0.04cm](5.0099998,-1.2517188)(4.75,-1.8517187)
\psline[linewidth=0.04cm](4.23,-2.0317187)(3.71,-1.7717187)
\psline[linewidth=0.04cm](3.55,-1.2517188)(3.7500002,-0.7717188)
\pscircle[linewidth=0.04,dimen=outer](7.253281,-1.315157){0.8}
\psdots[dotsize=0.12](7.21,-0.5117188)
\psdots[dotsize=0.12](7.21,-2.1117187)
\psdots[dotsize=0.12](7.21,-1.3117188)
\psdots[dotsize=0.12](6.47,-1.2917188)
\psdots[dotsize=0.12](8.050001,-1.2917188)
\psdots[dotsize=0.12](7.79,-0.7317188)
\psdots[dotsize=0.12](7.81,-1.8917187)
\psdots[dotsize=0.12](6.65,-1.8517187)
\psdots[dotsize=0.12](6.69,-0.7717188)
\usefont{T1}{ptm}{m}{n}
\rput(7.070625,-0.2817188){1}
\usefont{T1}{ptm}{m}{n}
\rput(7.8857813,-0.5417188){2}
\usefont{T1}{ptm}{m}{n}
\rput(8.212969,-1.3217188){3}
\usefont{T1}{ptm}{m}{n}
\rput(7.912813,-2.0617187){4}
\usefont{T1}{ptm}{m}{n}
\rput(7.131875,-2.3817186){-1}
\usefont{T1}{ptm}{m}{n}
\rput(6.335469,-2.0417187){-2}
\usefont{T1}{ptm}{m}{n}
\rput(6.092969,-1.3017188){-3}
\usefont{T1}{ptm}{m}{n}
\rput(6.3792186,-0.6217187){-4}
\psline[linewidth=0.04cm](4.22,2.0917187)(5.04,1.3317187)
\psline[linewidth=0.04cm](5.02,1.3317187)(4.24,0.5317187)
\psline[linewidth=0.04cm](4.22,0.5717187)(3.52,1.2917187)
\psline[linewidth=0.04cm](3.48,1.3717186)(4.2,2.0917187)
\psline[linewidth=0.04cm](0.72,-0.7682813)(1.22,-0.5082813)
\psline[linewidth=0.04cm](1.76,-0.7082813)(2.02,-1.2482812)
\psline[linewidth=0.04cm](1.8,-1.7882813)(1.22,-2.0282812)
\psline[linewidth=0.04cm](0.7,-1.7682813)(0.54,-1.2282813)
\psline[linewidth=0.04cm](7.22,-0.5282813)(7.76,-0.7682813)
\psline[linewidth=0.04cm](7.76,-0.7682813)(8.02,-1.3282813)
\psline[linewidth=0.04cm](8.02,-1.3282813)(7.78,-1.8682812)
\psline[linewidth=0.04cm](7.76,-1.8682812)(7.24,-2.0682814)
\psline[linewidth=0.04cm](7.24,-2.0682814)(6.68,-1.8282813)
\psline[linewidth=0.04cm](6.68,-1.8282813)(6.5,-1.2882813)
\psline[linewidth=0.04cm](6.5,-1.2882813)(6.72,-0.7682813)
\psline[linewidth=0.04cm](6.72,-0.7682813)(7.2,-0.5482813)
\end{pspicture} 
}\\
correspond to the elements of $\NC^{A}(8)$ invariant under this rotation. These, in turn, can be constructed by the elements of $\NC^{A}(2)$. 
\end{Ex}

Finally, the case of a category of type $(D_4,r,3)$ is missing. Easy observations allow to classify the thick subcategories in this case by hand. 
\begin{Prop} \label{class_D_4}
Let $\mathcal{T}$ be a finite triangulated category which is connected algebraic and standard of type $(D_4,r,3)$.

Put $s=r \mod 3$, hence $s \in \{0,1,2\}$.  

If $s=0$, the only proper thick subcategories of $\mathcal{T}$ are in correspondence to the following thick subcategories of $\der(\mod(k\q{D}_4))$.

\scalebox{1} 
{
\begin{pspicture}(0,-1.165)(9.28,1.165)
\psline[linewidth=0.04cm](0.63,0.655)(1.03,1.055)
\psline[linewidth=0.04cm](0.63,0.655)(1.03,0.255)
\psline[linewidth=0.04cm](0.63,0.655)(1.43,0.655)
\psline[linewidth=0.04cm](1.03,1.055)(1.43,0.655)
\psline[linewidth=0.04cm](1.03,0.255)(1.43,0.655)
\psline[linewidth=0.04cm](1.43,0.655)(8.63,0.655)
\psline[linewidth=0.04cm](1.43,0.655)(1.83,1.055)
\psline[linewidth=0.04cm](1.83,1.055)(2.23,0.655)
\psline[linewidth=0.04cm](2.23,0.655)(2.63,1.055)
\psline[linewidth=0.04cm](2.63,1.055)(3.03,0.655)
\psline[linewidth=0.04cm](3.03,0.655)(3.43,1.055)
\psline[linewidth=0.04cm](3.43,1.055)(3.83,0.655)
\psline[linewidth=0.04cm](3.83,0.655)(4.23,1.055)
\psline[linewidth=0.04cm](4.23,1.055)(4.63,0.655)
\psline[linewidth=0.04cm](4.63,0.655)(5.03,1.055)
\psline[linewidth=0.04cm](5.03,1.055)(5.43,0.655)
\psline[linewidth=0.04cm](5.43,0.655)(5.83,1.055)
\psline[linewidth=0.04cm](5.83,1.055)(6.23,0.655)
\psline[linewidth=0.04cm](6.23,0.655)(6.63,1.055)
\psline[linewidth=0.04cm](6.63,1.055)(7.03,0.655)
\psline[linewidth=0.04cm](7.03,0.655)(7.43,1.055)
\psline[linewidth=0.04cm](7.43,1.055)(7.83,0.655)
\psline[linewidth=0.04cm](7.83,0.655)(8.23,1.055)
\psline[linewidth=0.04cm](8.23,1.055)(8.63,0.655)
\psline[linewidth=0.04cm](1.43,0.655)(1.83,0.255)
\psline[linewidth=0.04cm](1.83,0.255)(2.23,0.655)
\psline[linewidth=0.04cm](2.23,0.655)(2.63,0.255)
\psline[linewidth=0.04cm](2.63,0.255)(3.03,0.655)
\psline[linewidth=0.04cm](3.03,0.655)(3.43,0.255)
\psline[linewidth=0.04cm](3.43,0.255)(3.83,0.655)
\psline[linewidth=0.04cm](3.83,0.655)(4.23,0.255)
\psline[linewidth=0.04cm](4.23,0.255)(4.63,0.655)
\psline[linewidth=0.04cm](4.63,0.655)(5.03,0.255)
\psline[linewidth=0.04cm](5.03,0.255)(5.43,0.655)
\psline[linewidth=0.04cm](5.43,0.655)(5.83,0.255)
\psline[linewidth=0.04cm](5.83,0.255)(6.23,0.655)
\psline[linewidth=0.04cm](6.23,0.655)(6.63,0.255)
\psline[linewidth=0.04cm](6.63,0.255)(7.03,0.655)
\psline[linewidth=0.04cm](7.03,0.655)(7.43,0.255)
\psline[linewidth=0.04cm](7.43,0.255)(7.83,0.655)
\psline[linewidth=0.04cm](7.83,0.655)(8.23,0.255)
\psline[linewidth=0.04cm](8.23,0.255)(8.63,0.655)
\psdots[dotsize=0.06](0.43,0.655)
\psdots[dotsize=0.06](0.23,0.655)
\psdots[dotsize=0.06](0.03,0.655)
\psdots[dotsize=0.06](8.83,0.655)
\psdots[dotsize=0.06](9.03,0.655)
\psdots[dotsize=0.06](9.23,0.655)
\psline[linewidth=0.04cm](0.63,-0.745)(1.03,-0.345)
\psline[linewidth=0.04cm](0.63,-0.745)(1.03,-1.145)
\psline[linewidth=0.04cm](0.63,-0.745)(1.43,-0.745)
\psline[linewidth=0.04cm](1.03,-0.345)(1.43,-0.745)
\psline[linewidth=0.04cm](1.03,-1.145)(1.43,-0.745)
\psline[linewidth=0.04cm](1.43,-0.745)(8.63,-0.745)
\psline[linewidth=0.04cm](1.43,-0.745)(1.83,-0.345)
\psline[linewidth=0.04cm](1.83,-0.345)(2.23,-0.745)
\psline[linewidth=0.04cm](2.23,-0.745)(2.63,-0.345)
\psline[linewidth=0.04cm](2.63,-0.345)(3.03,-0.745)
\psline[linewidth=0.04cm](3.03,-0.745)(3.43,-0.345)
\psline[linewidth=0.04cm](3.43,-0.345)(3.83,-0.745)
\psline[linewidth=0.04cm](3.83,-0.745)(4.23,-0.345)
\psline[linewidth=0.04cm](4.23,-0.345)(4.63,-0.745)
\psline[linewidth=0.04cm](4.63,-0.745)(5.03,-0.345)
\psline[linewidth=0.04cm](5.03,-0.345)(5.43,-0.745)
\psline[linewidth=0.04cm](5.43,-0.745)(5.83,-0.345)
\psline[linewidth=0.04cm](5.83,-0.345)(6.23,-0.745)
\psline[linewidth=0.04cm](6.23,-0.745)(6.63,-0.345)
\psline[linewidth=0.04cm](6.63,-0.345)(7.03,-0.745)
\psline[linewidth=0.04cm](7.03,-0.745)(7.43,-0.345)
\psline[linewidth=0.04cm](7.43,-0.345)(7.83,-0.745)
\psline[linewidth=0.04cm](7.83,-0.745)(8.23,-0.345)
\psline[linewidth=0.04cm](8.23,-0.345)(8.63,-0.745)
\psline[linewidth=0.04cm](1.43,-0.745)(1.83,-1.145)
\psline[linewidth=0.04cm](1.83,-1.145)(2.23,-0.745)
\psline[linewidth=0.04cm](2.23,-0.745)(2.63,-1.145)
\psline[linewidth=0.04cm](2.63,-1.145)(3.03,-0.745)
\psline[linewidth=0.04cm](3.03,-0.745)(3.43,-1.145)
\psline[linewidth=0.04cm](3.43,-1.145)(3.83,-0.745)
\psline[linewidth=0.04cm](3.83,-0.745)(4.23,-1.145)
\psline[linewidth=0.04cm](4.23,-1.145)(4.63,-0.745)
\psline[linewidth=0.04cm](4.63,-0.745)(5.03,-1.145)
\psline[linewidth=0.04cm](5.03,-1.145)(5.43,-0.745)
\psline[linewidth=0.04cm](5.43,-0.745)(5.83,-1.145)
\psline[linewidth=0.04cm](5.83,-1.145)(6.23,-0.745)
\psline[linewidth=0.04cm](6.23,-0.745)(6.63,-1.145)
\psline[linewidth=0.04cm](6.63,-1.145)(7.03,-0.745)
\psline[linewidth=0.04cm](7.03,-0.745)(7.43,-1.145)
\psline[linewidth=0.04cm](7.43,-1.145)(7.83,-0.745)
\psline[linewidth=0.04cm](7.83,-0.745)(8.23,-1.145)
\psline[linewidth=0.04cm](8.23,-1.145)(8.63,-0.745)
\psdots[dotsize=0.06](0.43,-0.745)
\psdots[dotsize=0.06](0.23,-0.745)
\psdots[dotsize=0.06](0.03,-0.745)
\psdots[dotsize=0.06](8.83,-0.745)
\psdots[dotsize=0.06](9.03,-0.745)
\psdots[dotsize=0.06](9.23,-0.745)
\psdots[dotsize=0.12](1.83,0.255)
\psdots[dotsize=0.12](1.83,0.655)
\psdots[dotsize=0.12](1.83,1.055)
\psdots[dotsize=0.12](4.23,0.255)
\psdots[dotsize=0.12](4.23,0.655)
\psdots[dotsize=0.12](4.23,1.055)
\psdots[dotsize=0.12](6.63,0.255)
\psdots[dotsize=0.12](6.63,0.655)
\psdots[dotsize=0.12](6.63,1.055)
\psdots[dotsize=0.18](2.23,-0.745)
\psdots[dotsize=0.18](4.63,-0.745)
\psdots[dotsize=0.18](7.03,-0.745)
\psdots[dotsize=0.18](1.83,1.055)
\psdots[dotsize=0.18](1.83,0.655)
\psdots[dotsize=0.18](1.83,0.255)
\psdots[dotsize=0.18](4.23,1.055)
\psdots[dotsize=0.18](4.23,0.655)
\psdots[dotsize=0.18](4.23,0.255)
\psdots[dotsize=0.18](6.63,1.055)
\psdots[dotsize=0.18](6.63,0.655)
\psdots[dotsize=0.18](6.63,0.255)
\end{pspicture} 
}

In case $s=1$ and $s=2$ there are no proper thick subcategories.
\end{Prop}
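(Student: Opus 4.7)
The plan is to apply Theorem \ref{maintheorem}, reducing the problem to the classification of thick $\langle\phi\tau^r\rangle$-invariant subcategories of $\der(\mod(k\vec{D}_4))$. Since $\phi$ and $\tau$ commute as automorphisms of $\mathbb{Z}D_4$ and $\phi^3=\id$, we obtain $(\phi\tau^r)^3=\tau^{3r}\cong S^r$, invoking Proposition \ref{Bialkowski}(2) (applied to $D_4=D_n$ with $n$ even). As every thick subcategory is closed under the suspension $S$, the $\phi\tau^r$-invariance condition depends only on $r\bmod 3$, which justifies the reduction to $s\in\{0,1,2\}$.

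Next, via the Br\"uning--Ingalls--Thomas bijection, I would translate the problem to the combinatorial one of identifying $\phi\tau^s$-invariant elements of $\NC^D(4)$, a poset of $|\NC^D(4)|=\Cat(D_4)=50$ elements. By Proposition \ref{tau_action} together with Lemma \ref{cox_D_n}, $\tau$ acts on $\NC^D(4)$ as the combinatorial operation $\sigma\rho$. The action of $\phi$ has to be computed directly: since $\phi$ realises the triality of the Dynkin diagram $D_4$, cyclically permuting the three outer vertices while fixing the central vertex~$2$, an argument parallel to the proof of Lemma \ref{D_n_phi_action} (tracking how $\phi$ permutes complete exceptional sequences of simples of $\mod(k\vec{D}_4)$) determines an order-$3$ permutation $\phi^*$ of $\NC^D(4)$. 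A thick subcategory $\mathcal{S}$ is then $\phi\tau^s$-invariant iff $\phi^*((\sigma\rho)^s(\tilde{\cox}(\mathcal{S})))=\tilde{\cox}(\mathcal{S})$.

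Finally, since $|\NC^D(4)|=50$ is small, I would complete the proof by a direct case analysis. For $s=0$, the $\phi^*$-fixed elements are $\id$ and the Coxeter element $c$ (corresponding to the zero and the full thick subcategory of $\mathcal{T}$) together with exactly two further elements, whose images under the Br\"uning correspondence are precisely the two thick subcategories pictured in the statement. For $s=1$ and $s=2$, the composition $\phi^*\circ\operatorname{conj}(c^s)$ on $\NC^D(4)$ has only $\id$ and $c$ as fixed points, so no proper thick subcategories remain. The hard part will be the accurate description of $\phi^*$ on $\NC^D(4)$: the $(2n-2)$-gon labeling of $\NC^D(n)$ does not directly mirror the Dynkin labels cycled by $\phi$, so one has to carefully translate the triality action to the partition picture. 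Once this bookkeeping is set up, the remaining verification is finite and purely combinatorial.
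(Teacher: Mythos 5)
Your opening reduction is correct and coincides with the paper's: since $h_{D_4}=6$, Proposition \ref{Bialkowski} gives $S\cong\tau^{3}$, and because a thick subcategory is closed under suspension, $\langle\phi\tau^{r}\rangle$-invariance is equivalent to $\langle\phi\tau^{s}\rangle$-invariance for $s=r\bmod 3$ (the detour through $(\phi\tau^{r})^{3}\cong S^{r}$ is unnecessary; what one actually uses is $\phi\tau^{r}\cong(\phi\tau^{s})S^{k}$). After that, however, your text is a plan rather than a proof: the decisive content --- computing the permutation $\phi^{*}$ induced on the $50$ elements of $\NC^{D}(4)$ (equivalently on the thick subcategories of $\der(\mod(k\vec{D}_4))$) and determining the fixed points of $\phi^{*}\circ(\sigma\rho)^{s}$ for $s=0,1,2$ --- is explicitly deferred (``I would complete the proof by a direct case analysis'', ``the hard part will be the accurate description of $\phi^{*}$''), and the asserted outcomes (exactly two proper invariant subcategories for $s=0$, none for $s=1,2$) are precisely the claims of Proposition \ref{class_D_4}. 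Nothing beyond the reduction modulo $3$ is actually established.

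There is also a substantive obstacle to the route you sketch. The order-$3$ automorphism of Theorem \ref{wagroups} is not induced by a quiver automorphism of $\vec{D}_4$: no orientation of $D_4$ is preserved by the triality, and correspondingly $\phi((i,4))=(i-1,1)$ involves a $\tau$-shift. Hence $\phi$ does not restrict to an automorphism of the heart $\mod(k\vec{D}_4)$, and an argument ``parallel to Lemma \ref{D_n_phi_action}'' does not transfer, since that lemma relies on the order-$2$ $\phi$ fixing the module category and simply permuting simples of wide subcategories; here one would need the projective/injective bookkeeping of Proposition \ref{tau_action} in addition, and the resulting action has no description in the $(2n-2)$-gon model --- this is exactly why the paper excludes $(D_4,r,3)$ from the noncrossing-partition classification. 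The paper instead argues directly in $\der(\mod(k\vec{D}_4))$: after the same reduction to $s=r\bmod 3$, it exhibits the two displayed subcategories as thick and $\langle\phi\rangle$-invariant and observes that adjoining any further indecomposable generates, by thickness and invariance, the whole category, and that the same argument rules out proper invariant thick subcategories when $s=1,2$. To repair your proposal you would either have to carry out the finite but nontrivial computation of $\phi^{*}$ on $\NC_{\vec{D}_4}$ honestly, or switch to such a direct argument on indecomposables.
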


\begin{proof}
Let $\phi$ be the automorphism of order $3$ described in Theorem \ref{wagroups}.
Since $S\cong \tau^3$, we have that a thick subcategory of $\der(\mod(k\q{D}_4))$ is $\langle \phi \tau^r \rangle$-invariant if and only if it is $\langle \phi \tau^s \rangle$-invariant. 

In case $s=0$, adding further indecomposables to the described proper thick subcategories would yield the whole category because of thickness. 

The same argument shows that in case $s=1$ and $s=2$ there are no proper thick subcategories. 
\end{proof}

\section{Overview}
Let $\mathcal{T}$ be a finite triangulated category which is connected, algebraic and standard of type $(\Delta,r,t)$. The following table gives an overview of the correspondences to the thick subcategories of $\mathcal{T}$ and of the number of thick subcategories of $\mathcal{T}$. Moreover, you can see where to find the respective information within this paper. 

\newpage
\begin{footnotesize}
\begin{tabular}[t]{|l|p{4cm}|p{3.5cm}|l|}
\hline  type			&  classifying	partitions     &  alternative description  	& number of   \\ 
						&							   &							& partitions  \\
\hline
\hline  $(A_n,r,1)$ 	&  $w \in \NC_{\An}$  with     & elements of $\NC^{A}(n+1)$ 	&   $C_s$ if $s=n+1$;        \\
 						&  $w=\cox(\An)^s w \cox(\An)^{-s}$,& invariant under rotation  	&	$\binom{2s}{s}$ else;\\
 						&  								    & by $s\frac{2\pi}{n+1}$,   	&           				\\
 						&  $s=\gcd(n+1,r)$,					& $s=\gcd(n+1,r)$,				&           				\\
 						&  see Theorem \ref{classif}		& see Proposition \ref{class_A}	&   Proposition \ref{nb_A}   \\
\hline  $(A_{n},r,2)$   &  $w \in \NC_{\An}$  with   		& elements of $\NC^{A}(n+1)$ 	&   $C_s$ if $s=n+1$;        \\
 		$n\geq 3$ odd	& $w=\cox(\An)^s w \cox(\An)^{-s}$,& invariant under rotation  	&	$\binom{2s}{s}$ else; 	\\
 						&  								    & by $s\frac{2\pi}{n+1}$,   	&           				\\
 						&  $s=\gcd(n+1,\frac{n+1}{2}+r)$,	& $s=\gcd(n+1,\frac{n+1}{2}+r)$,&           				\\
 						&  see Theorem \ref{classif}		& see Proposition \ref{class_A}	&   Proposition \ref{nb_A}   \\ 		\hline  $(A_{n},r,\infty)$ &  $w \in \NC_{\An}$  with   	& elements of $\NC^{A}(n+1)$ 	&   $C_s$ if $s=n+1$;        \\
 		$n$ even		&  $w=\cox(\An)^s w \cox(\An)^{-s}$,& invariant under rotation  	&	$\binom{2s}{s}$ else; 	\\
 						&  								    & by $s\frac{2\pi}{n+1}$,   	&           				\\
 						&  $s=\gcd(n+1,\frac{n}{2}+r)$,		& $s=\gcd(n+1,\frac{n}{2}+r)$,  &           				\\
 						&  see Theorem \ref{classif}		& see Proposition \ref{class_A}	&   Proposition \ref{nb_A}   \\ 
\hline  $(D_n,r,1)$ 	&  $w \in \NC_{\Dn}$  with   		& elements of $\NC^{D}(n)$	 	&  $\Cat(D_n)$              \\
						&	$w=\cox(\Dn)^s w \cox(\Dn)^{-s}$,&invariant under $(\sigma\rho)^s$&   if $s=2n-2$ 			\\
 						&  	$s=\gcd(2n-2,r)$,				& $s=\gcd(2n-2,r)$,				&		or $s=n-1$ odd;  	\\
 						&  									& 								&  $\Cat(D_{n-1})$          \\
 						&									&								&  if $s=n-1$ even;          \\
 						&									&								& $\binom{2p}{p}$ else where  \\
 						&									&								& $p=\gcd(n-1,s)$;           \\
 						&  see Theorem \ref{classif}		& see Proposition \ref{D_n_odd}	&   Proposition \ref{nb_D_1} \\	
\hline  $(D_n,r,2)$ 	&  $w \in \NC_{\Dn}$  with   		& elements of $\NC^{D}(n)$	 	&  $\Cat(D_n)$ 		      \\
 		$n$ odd			&  $w=\cox(\Dn)^s w \cox(\Dn)^{-s}$,&invariant under $(\sigma\rho)^s$&	if $s=2n-2$;        	\\
 						&  $s=\gcd(2n-2,\frac{2n-2}{2}+r)$,	& $s=\gcd(2n-2,\frac{2n-2}{2}+r)$,&  $\Cat(D_{n-1})$ 		\\
 						&									&								& if $s=n-1$; 				\\
 						&									&								& $\binom{2p}{p}$ else where \\
 						&									&								& $p=\gcd(n-1,s)$;		\\
 						&  see Theorem \ref{classif}		& see Proposition \ref{D_n_odd}	&   Proposition \ref{nb_D_2} \\		
\hline  $(D_n,r,2)$ 	&  									& elements of $\NC^{D}(n)$	 	&  $\Cat(D_{n-1})$			\\
 		$n$ even		&  									&invariant under $\sigma^{s+1}\rho^s$,&	if $s=0$ 			 \\
 						&								    & $s=r \mod (2n-2)$,			& or $s=n-1$; 					\\
 						&  									& 								&   $\binom{2p}{p}$ else where \\
 						&									&								& $p=\gcd(n-1,s)$;			\\
 						&  									& see Proposition \ref{class_D_2}&   Proposition \ref{nb_D_3} \\
\hline  $(D_4,r,3)$ 	&  									& $s=r \mod 3$,        			&   					      \\	
						&									& $s=0$: six distinguished      &    $8$                 		\\
						&									& proper thick subcategories,   &								\\
						&									& $s=1,2$: no proper ones, 		&    $2$						\\
						&									& see Proposition \ref{class_D_4} &                              \\
\hline $(E_n,r,1)$		&  $w \in \NC_{\En}$ with			&								&								\\
	$n=6,7,8$			& $w=\cox(\En)^sw\cox(\En)^{-s}$,	&								&								\\
						& $s=\gcd(h_{\En},r)$, 				&								&								\\
						& see Theorem \ref{classif}			&								&								\\
\hline $(E_6,r,2)$		&  $w \in \NC_{\q{E}_6}$ with		&								&								\\
						& $w=\cox(\q{E}_6)^sw\cox(\q{E}_6)^{-s}$,	&						&								\\
						& $s=\gcd(12,r+6)$,		 			&								&								\\
						& see Theorem \ref{classif}			&								&								\\
\hline						
		
\end{tabular} 
\end{footnotesize}

\section{Application to cluster categories }

Theorem \ref{maintheorem} applies to cluster categories. 

Let $\vec{\Delta}$ be a Dynkin quiver of type $A$, $D$ or $E$. 
The \emph{cluster category} $\mathcal{C}(k\vec{\Delta})$ associated to $\vec{\Delta}$ is by definition \cite{Buan} the orbit category
$$\mathcal{C}=\mathcal{C}(k\vec{\Delta})=\der(\mod(k\vec{\Delta}))/\tau^{-1}\circ S. $$

By Theorem \ref{orbitcat}, the cluster category is triangulated. Hence, we ask for the thick subcategories.

\begin{Thm}
The cluster category $\mathcal{C}(k\vec{\Delta})$ admits no proper thick subcategories.
\end{Thm}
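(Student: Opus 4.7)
The plan is to apply Theorem \ref{maintheorem} with $\mathcal{H}=\mod(k\vec{\Delta})$ and $\Phi=\tau^{-1}\circ S$, reducing the problem to identifying the thick $\langle\Phi\rangle$-invariant subcategories of $\der(\mod(k\vec{\Delta}))$. The crucial first observation is that every thick subcategory $\mathcal{S}$ is automatically closed under the shift $S$, so
$$\Phi(\mathcal{S})\;=\;\tau^{-1}(S\mathcal{S})\;=\;\tau^{-1}(\mathcal{S}),$$
and $\langle\Phi\rangle$-invariance of a thick subcategory is therefore nothing other than $\langle\tau\rangle$-invariance. By Proposition \ref{tau_action}, this translates under the bijection $\tilde{\cox}$ to the condition that the associated noncrossing partition $w\in\NC_{\vec{\Delta}}$ commutes with $\cox(\vec{\Delta})$ in $W_{\vec{\Delta}}$.

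The statement thus reduces to showing that the only $w\in\NC_{\vec{\Delta}}$ centralizing $\cox(\vec{\Delta})$ are $\id$ and $\cox(\vec{\Delta})$ themselves; these correspond respectively to the zero subcategory and to the whole of $\der(\mod(k\vec{\Delta}))$, and hence project under $\pi$ to the two trivial thick subcategories of $\mathcal{C}(k\vec{\Delta})$.

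The main obstacle is this centralizer computation. I would invoke Springer's classical theorem on regular elements, which identifies the centralizer of $\cox(\vec{\Delta})$ in $W_{\vec{\Delta}}$ with the cyclic group $\langle\cox(\vec{\Delta})\rangle$ of order $h_\Delta$, and then rule out intermediate powers by showing that no $\cox(\vec{\Delta})^k$ with $2\leq k\leq h_\Delta-1$ lies in $\NC_{\vec{\Delta}}=[\id,\cox(\vec{\Delta})]_T$. The condition $\cox^k\leq\cox$ in the absolute order amounts to the length-additive factorization $\cox=\cox^k\cdot\cox^{1-k}$, which via Carter's formula $l_T(w)=n-\dim V^w$ becomes the decomposition $V=V^{\cox^k}\oplus V^{\cox^{1-k}}$ in the reflection representation $V$. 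Since $V^{\cox^k}$ is spanned by those $\cox$-eigenvectors whose eigenvalues $\zeta^{m_i}$ (with $\zeta=e^{2\pi i/h_\Delta}$ and $m_i$ the exponents of $W_{\vec{\Delta}}$) satisfy $km_i\equiv 0\pmod{h_\Delta}$, applying the direct-sum requirement to the smallest exponent $m_1=1$ forces $h_\Delta\mid k$ or $h_\Delta\mid (1-k)$, i.e.\ $k\equiv 0$ or $k\equiv 1\pmod{h_\Delta}$. For types $A$ and $D$ this can alternatively be read off from Sections \ref{A_n} and \ref{D_n}, since rotation by $2\pi/(n+1)$ on $\NC^A(n+1)$ (respectively $\sigma\rho$ on $\NC^D(n)$) fixes only the finest and coarsest partitions.
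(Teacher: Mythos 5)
Your argument is correct, and its first half coincides with the paper's: you invoke Theorem \ref{maintheorem} for $\Phi=\tau^{-1}\circ S$ and observe that a thick subcategory is automatically shift-stable, so $\langle\Phi\rangle$-invariance is just $\tau$-invariance. Where you diverge is the final step. The paper finishes in one line, without entering the combinatorial classification at all: a nonzero $\tau$-invariant thick subcategory contains the whole $\tau$-orbit of some indecomposable, and the thick closure of such an orbit is already all of $\der(\mod(k\vec{\Delta}))$; hence only the two trivial invariant subcategories exist. You instead push the question through $\tilde{\cox}$ and Proposition \ref{tau_action}, reducing to the statement that the only elements of $\NC_{\vec{\Delta}}$ commuting with $\cox(\vec{\Delta})$ are $\id$ and $\cox(\vec{\Delta})$, which you settle via Springer's theorem (the centralizer of a Coxeter element is $\langle\cox(\vec{\Delta})\rangle$) together with Carter's formula and the eigenvalue argument ruling out $\cox(\vec{\Delta})^k\leq\cox(\vec{\Delta})$ for $2\leq k\leq h_\Delta-1$ (the fixed spaces are $c$-stable and meet $V^{c}=0$, so the eigenvector for the exponent $1$ forces $k\equiv 0$ or $1 \bmod h_\Delta$). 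This is a genuinely different, heavier route: it imports two external but standard Coxeter-theoretic facts, whereas the paper's argument is elementary and purely categorical (though stated tersely). What your version buys is a sharper combinatorial statement — the centralizer of the Coxeter element meets $\NC_{\vec{\Delta}}$ only in the two trivial elements — which also substantiates the paper's earlier remark after Proposition \ref{simplif} that no proper thick subcategories exist whenever $p$ is coprime to $h_\Delta$, and it stays entirely inside the classification framework of Theorem \ref{classif}; your closing observation that in types $A$ and $D$ the same fact can be read off from the rotation picture is a correct cross-check consistent with Propositions \ref{nb_A} and \ref{nb_D_1} (both give exactly $2$ when $s=1$).
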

\begin{proof}
A thick subcategory $\mathcal{S}$ of $\der(\mod(k\vec{\Delta}))$ is invariant under $\tau^{-1} \circ S$ if and only if it is invariant under $\tau^{-1}$. But then, an indecomposable non-zero object $X \in \mathcal{S}$ and its $\tau$-translates generate the whole category $\der(\mod(k\vec{\Delta}))$. 
\end{proof}

\begin{Rem}
The same result holds for the \emph{generalised cluster category} \cite{Keller}
$$ \mathcal{C}^m(k\q{\Delta})= \der(\mod(k\q{\Delta}))/\tau^{-1}\circ S^m  $$
where  $m \in \mathbb{Z}$. 
\end{Rem}

\section*{Acknowledgements}
I have created important parts of this paper during my research stay in Paris with Bernhard Keller in spring 2010. I sincerely thank him for giving me this opportunity, and for his very profitable assistance during this time.

Many thanks to my advisor Henning Krause for his support, for helpful discussions and for coming up with the interesting topic. 

I am also grateful to Hideto Asashiba and to Jan Stovicek for reading this paper in advance and for useful discussions.

\nocite{*}
\bibliographystyle{plain}
\bibliography{Bibliography1}

\begin{thebibliography}{10}

\bibitem{Amiot}
Claire Amiot.
\newblock On the structure of triangulated categories with finitely many
  indecomposables.
\newblock {\em Bull. Soc. Math. France}, 135(3):435--474, 2007.

\bibitem{Araya}
Tokuji Araya.
\newblock Exceptional sequences over path algebras of type {$A_n$} and
  non-crossing spanning trees.
\newblock arXiv:math.RT/0904.2831v1, 2009.

\bibitem{Asashiba}
Hideto Asashiba.
\newblock The derived equivalence classification of representation-finite
  selfinjective algebras.
\newblock {\em J. Algebra}, 214(1):182--221, 1999.

\bibitem{Assem}
Ibrahim Assem, Daniel Simson, and Andrzej Skowro{\'n}ski.
\newblock {\em Elements of the representation theory of associative algebras.
  {V}ol. 1}, volume~65 of {\em London Mathematical Society Student Texts}.
\newblock Cambridge University Press, Cambridge, 2006.
\newblock Techniques of representation theory.

\bibitem{Reiner2}
Christos~A. Athanasiadis and Victor Reiner.
\newblock Noncrossing partitions for the group {$D_n$}.
\newblock {\em SIAM J. Discrete Math.}, 18(2):397--417 (electronic), 2004.

\bibitem{Benson}
D.~J. Benson, Jon~F. Carlson, and Jeremy Rickard.
\newblock Thick subcategories of the stable module category.
\newblock {\em Fund. Math.}, 153(1):59--80, 1997.

\bibitem{Bialkowski}
Jerzy Bia{\l}kowski and Andrzej Skowro{\'n}ski.
\newblock Calabi-{Y}au stable module categories of finite type.
\newblock {\em Colloq. Math.}, 109(2):257--269, 2007.

\bibitem{Brady}
Thomas Brady.
\newblock A partial order on the symmetric group and new {$K(\pi,1)$}'s for the
  braid groups.
\newblock {\em Adv. Math.}, 161(1):20--40, 2001.

\bibitem{Bruening}
Kristian Br{\"u}ning.
\newblock Thick subcategories of the derived category of a hereditary algebra.
\newblock {\em Homology, Homotopy Appl.}, 9(2):165--176, 2007.

\bibitem{Buan}
Aslak~Bakke Buan, Robert Marsh, Markus Reineke, Idun Reiten, and Gordana
  Todorov.
\newblock Tilting theory and cluster combinatorics.
\newblock {\em Adv. Math.}, 204(2):572--618, 2006.

\bibitem{Cibils}
Claude Cibils and Eduardo~N. Marcos.
\newblock Skew category, {G}alois covering and smash product of a
  {$k$}-category.
\newblock {\em Proc. Amer. Math. Soc.}, 134(1):39--50 (electronic), 2006.

\bibitem{Dichev}
N.~Dichev.
\newblock {\em Thick subcategories for quiver representations}.
\newblock Diss., University of Paderborn, 2009.

\bibitem{Dieterich}
Ernst Dieterich.
\newblock The {A}uslander-{R}eiten quiver of an isolated singularity.
\newblock In {\em Singularities, representation of algebras, and vector bundles
  ({L}ambrecht, 1985)}, volume 1273 of {\em Lecture Notes in Math.}, pages
  244--264. Springer, Berlin, 1987.

\bibitem{Gabriel}
Peter Gabriel.
\newblock Auslander-{R}eiten sequences and representation-finite algebras.
\newblock In {\em Representation theory, {I} ({P}roc. {W}orkshop, {C}arleton
  {U}niv., {O}ttawa, {O}nt., 1979)}, volume 831 of {\em Lecture Notes in
  Math.}, pages 1--71. Springer, Berlin, 1980.

\bibitem{Happel}
Dieter Happel.
\newblock {\em Triangulated categories in the representation theory of
  finite-dimensional algebras}, volume 119 of {\em London Mathematical Society
  Lecture Note Series}.
\newblock Cambridge University Press, Cambridge, 1988.

\bibitem{Hopkins2}
Michael~J. Hopkins.
\newblock Global methods in homotopy theory.
\newblock In {\em Homotopy theory ({D}urham, 1985)}, volume 117 of {\em London
  Math. Soc. Lecture Note Ser.}, pages 73--96. Cambridge Univ. Press,
  Cambridge, 1987.

\bibitem{Hopkins}
Michael~J. Hopkins and Jeffrey~H. Smith.
\newblock Nilpotence and stable homotopy theory. {II}.
\newblock {\em Ann. of Math. (2)}, 148(1):1--49, 1998.

\bibitem{Ingalls}
Colin Ingalls and Hugh Thomas.
\newblock Noncrossing partitions and representations of quivers.
\newblock {\em Compos. Math.}, 145(6):1533--1562, 2009.

\bibitem{Keller}
Bernhard Keller.
\newblock On triangulated orbit categories.
\newblock {\em Doc. Math.}, 10:551--581, 2005.

\bibitem{Kellerdg}
Bernhard Keller.
\newblock On differential graded categories.
\newblock In {\em International {C}ongress of {M}athematicians. {V}ol. {II}},
  pages 151--190. Eur. Math. Soc., Z\"urich, 2006.

\bibitem{Kreweras}
G.~Kreweras.
\newblock Sur les partitions non crois\'ees d'un cycle.
\newblock {\em Discrete Math.}, 1(4):333--350, 1972.

\bibitem{Neeman}
Amnon Neeman.
\newblock The chromatic tower for {$D(R)$}.
\newblock {\em Topology}, 31(3):519--532, 1992.
\newblock With an appendix by Marcel B{\"o}kstedt.

\bibitem{Quillen}
Daniel Quillen.
\newblock Higher algebraic {$K$}-theory. {I}.
\newblock In {\em Algebraic {$K$}-theory, {I}: {H}igher {$K$}-theories ({P}roc.
  {C}onf., {B}attelle {M}emorial {I}nst., {S}eattle, {W}ash., 1972)}, pages
  85--147. Lecture Notes in Math., Vol. 341. Springer, Berlin, 1973.

\bibitem{Reiner}
Victor Reiner.
\newblock Non-crossing partitions for classical reflection groups.
\newblock {\em Discrete Math.}, 177(1-3):195--222, 1997.

\bibitem{Riedtmann}
C.~Riedtmann.
\newblock Algebren, {D}arstellungsk\"ocher, \"{U}berlagerungen und zur\"uck.
\newblock {\em Comment. Math. Helv.}, 55(2):199--224, 1980.

\bibitem{Riedtmann1}
Christine Riedtmann.
\newblock On stable blocks of {A}uslander-algebras.
\newblock {\em Trans. Amer. Math. Soc.}, 283(2):485--505, 1984.

\bibitem{Simion}
Rodica Simion and Daniel Ullman.
\newblock On the structure of the lattice of noncrossing partitions.
\newblock {\em Discrete Math.}, 98(3):193--206, 1991.

\bibitem{Xiao}
Jie Xiao and Bin Zhu.
\newblock Locally finite triangulated categories.
\newblock {\em J. Algebra}, 290(2):473--490, 2005.

\end{thebibliography}

\end{document}